%
%
%


\documentclass{amsproc}

\usepackage[pdftex]{hyperref}
\usepackage{color}



\usepackage{amsmath,amssymb,amsthm,mathtools}
\usepackage[margin=1in]{geometry}
\usepackage[only,llbracket,rrbracket]{stmaryrd}
\usepackage[numbers,square]{natbib}


\newtheorem{theorem}{Theorem}[section]
\newtheorem{lemma}[theorem]{Lemma}

\newtheorem{proposition}[theorem]{Proposition}
\newtheorem{conjecture}[theorem]{Conjecture}

\theoremstyle{definition}
\newtheorem{definition}[theorem]{Definition}
\newtheorem{example}[theorem]{Example}

\theoremstyle{remark}
\newtheorem{remark}[theorem]{Remark}

\numberwithin{equation}{section}

\newcommand{\R}{\mathbb{R}}
\newcommand{\N}{\mathbb{N}}
\newcommand{\C}{\mathbb{C}}
\newcommand{\Z}{\mathbb{Z}}


\newcommand{\dd}{\mathrm{d}}
\newcommand{\sg}{\sigma}
\newcommand{\Pn}{P_n}

\newcommand{\He}{\mathrm{He}}
\newcommand{\ii}{{\rm{i}}}
\newcommand{\eee}{{\rm e}}

\renewcommand{\Re}{\operatorname{Re}}
\renewcommand{\Im}{\operatorname{Im}}

\begin{document}

\title[Multiplicative Hermite and Laguerre polynomials]{Zero distribution of multiplicative Hermite and Laguerre polynomials}


\author{Zakhar Kabluchko}
\address{Institut f\"ur Mathematische Stochastik, Universit\"at M\"unster, Orl\'eans-Ring 10, 48149 M\"unster, Germany}
\email{zakhar.kabluchko@uni-muenster.de}

\thanks{This work would not be possible without the selfless help and clever insights of  ChatGPT5. Supported by the German Research Foundation under Germany's Excellence Strategy  EXC 2044 -- 390685587, Mathematics M\"unster: Dynamics - Geometry - Structure and by the DFG priority program SPP 2265 Random Geometric Systems.
}

\subjclass[2020]{Primary: 46L54, 26C10.  Secondary: 30C15, 33C45, 30C10, 60B10, 34A99, 35F99} 

\keywords{Multiplicative Hermite polynomials; multiplicative Laguerre polynomials; asymptotic zero distribution;  heat flow;  finite free probability; real-rooted polynomials; polynomials with roots on the unit circle;  free multiplicative convolution; free multiplicative normal distribution; free multiplicative Poisson distribution; Burgers' equation}

\date{}

\begin{abstract}
It is well-known that, as $n\to\infty$, the zero distribution of the $n$-th Hermite polynomial converges to the semicircular law (the free normal distribution), while the zero distribution  of the associated Laguerre polynomials converges to the Marchenko--Pastur law (the free Poisson distribution). In this paper, we establish multiplicative analogues of these results. We define the multiplicative  Hermite and Laguerre polynomials by
\begin{align*}
H_n^*(x;s)
&:=
\eee^{-\frac s2 ((x\partial_x)^2 - n x \partial_x) } (x-1)^n  = \sum_{j=0}^n (-1)^{n-j} \binom nj \eee^{-\frac s2 (j^2 - nj)} x^j,
\\
L_n^*(x; b,c)
&:=
(x\partial_x + b)^c (x-1)^n =  \sum_{j=0}^n (-1)^{n-j} \binom nj (j+b)^c x^j,
\end{align*}
where $n\in \N_0$, $\partial_x$ denotes the differentiation operator w.r.t.\  $x$,  and $s\in \R$, $b\in \C$, $c\in \N_0$ are parameters. For the multiplicative Hermite polynomials, we show that, as $n\to\infty$,  the zero distribution of $H_n^*(x;s/n)$ converges weakly to the free multiplicative normal distribution on the positive half-line (when $s>0$) or to the free unitary normal distribution on the unit circle $\{|z| = 1\}$ (when $s<0$). For the multiplicative Laguerre polynomials,  we show that the zero distribution of $L_n^*(x; n\beta, \lfloor n \gamma \rfloor)$ converges to the free multiplicative Poisson distribution on the positive half-line (when $\gamma >0$ and $\beta \in \R\backslash[0,1]$) or on the unit circle (when $\gamma>0$ and  $\beta \in -\frac 12 + \ii \R$). All these results are obtained by essentially the same method, which treats the Hermite/Laguerre cases and the unitary/positive settings in a unified way. As a corollary, let $(P_n(x))_{n\in \N}$ be a sequence of polynomials with $\deg P_n = n$ whose zeros are all nonnegative or all unitary, and assume that the zero distribution  of $P_n(x)$ converges to a probability measure $\rho$, as $n\to\infty$. Then the asymptotic zero distributions  of the polynomials  $\exp(- \frac s{2n} ((x \partial_x)^2- n x \partial_x)) P_n(x)$ and   $(x \partial_x + n \beta)^{\lfloor n \gamma\rfloor}  P_n(x)$ are  characterized, respectively, as the free multiplicative convolutions of $\rho$ with the free multiplicative normal and free multiplicative Poisson distributions.
\end{abstract}

\maketitle

\section{Hermite polynomials, heat flow, and roots of polynomials}
\subsection{Introduction}
It is well-known that, as $n\to\infty$, the zero distribution of the $n$-th Hermite polynomial converges to the semicircular law. Similarly, the zero distribution of the (associated) Laguerre polynomial converges to the Marchenko--Pastur distribution. In free probability, these two distributions are analogues of the normal and the Poisson distribution, respectively. The finite free probability~\cite{marcus2021polynomial,marcus_spielman_srivastava} completes the picture by showing that, for a fixed degree $n$,  the Hermite and the Laguerre polynomials appear as limits in the finite free analogues of the central limit theorem and the Poisson limit theorem.

In the present paper, we study the asymptotic zero distribution of  the \emph{multiplicative} analogues of the Hermite and Laguerre polynomials. We show that the zero distribution of these polynomials converges, as the degree goes to $\infty$, to the free multiplicative normal (respectively, Poisson) distribution. Any such result exists in two settings, a unitary (on the unit circle) and a positive (on the positive half-line). We give proofs that treat these two settings in a unified way.

\subsection{Hermite polynomials and the heat flow}\label{subsec:hermite_polys_additive}
We begin by recalling some well-known facts about Hermite polynomials and  their relationship to the heat operator. The classical (probabilists') Hermite polynomials  $\He_1(x;t)=x$, $\He_2(x;t)=x^2-t$, $\He_3(x;t)=x^3-3tx, \ldots$  may be defined by the formula
\begin{equation}\label{eq:hermite_polys_def}
\He_n(x;t)
=
\eee^{-\frac12 t \partial_x^2} x^n
=
\sum_{m=0}^{\lfloor n/2\rfloor}\frac{n!}{m! (n-2m)!}\left(-\frac{t}{2}\right)^{m}x^{ n-2m},
\qquad n\in \N_0, \; t\in \R.
\end{equation}
Here, $\partial_x$ denotes the differentiation operator w.r.t.\  the variable $x$, and $\eee^{\frac12 s \partial_x^2}: \C[x] \to \C[x]$ denotes the heat operator with ``time'' $s\in \C$, a linear operator acting on the vector space $\C[x]$ of polynomials via
$$
\eee^{\frac12 s \partial_x^2} P(x): = \sum_{j=0}^\infty\frac{(s/2)^j}{j!} \partial_x^{2j}P(x), \qquad P(x) \in \C[x].
$$
Note that the series on the right-hand side terminates after finitely many non-zero terms. The variable $t$ in $\He_n(x;t)$ is a  scaling parameter: Defining $\He_n(x):= \He_n(x; 1)$ we have
\begin{equation}\label{eq:hermite_scaling}
\He_n(x;t)=t^{n/2} \mathrm{He}_n\left(\frac{x}{\sqrt{t}}\right) \quad(\text{if } t>0),
\qquad
\He_n(x;t) = (\ii\sqrt{-t})^{ n} \mathrm{He}_n\left(\frac{x}{\ii\sqrt{-t}}\right)  \quad(\text{if } t<0).
\end{equation}
If $P(x)\in \C[x]$ is a polynomial  playing the role of the initial condition, then the solution to the heat equation
\[
\partial_s u(x; s)  = \frac{1}{2}\partial_x^2 u(x;s), \qquad u(x;0)=P(x),\qquad x\in \R, \;   s\in \R,
\]
can be written as
$$
u(x;s) =\eee^{\frac{1}{2}s\partial_x^2} P(x), \qquad s\in \R.
$$
The operator $\eee^{\frac{1}{2}s\partial_x^2}$ is called the forward heat operator if $s>0$ and the backward heat operator if $s<0$.

\subsection{Heat flow and roots of polynomials}
An interesting property of the \emph{backward} heat flow is that it preserves real-rootedness: If $P(x)$ is a polynomial with only real roots, then the polynomial  $\eee^{-\frac{t}{2}\partial_x^2} P(x)$ is also real-rooted  for all $t\geq 0$. This claim is a special case of the P\'olya--Benz theorem; see~\cite{benz} and~\cite[Theorem~1.2]{aleman_beliaev_hedenmalm}. For another proof, see~\cite{tao_blog1}.  For example, it is a classical fact that $\He_n(x;t) = \eee^{-\frac12 t \partial_x^2} x^n$ is real-rooted for all $t>0$. On the other hand, it follows from~\eqref{eq:hermite_scaling} that the \emph{forward} heat flow preserves the class of polynomials whose roots belong to the imaginary axis $\ii   \R$.

The \textit{zero distribution} of a polynomial $P(x)\in \C[x]$ of degree  $n\in \N$ is a probability measure on $\C$ given by
\begin{equation*}
\llbracket  P \rrbracket_n  \coloneqq \frac 1n \sum_{\substack{z\in \C:  P(z) = 0}} \delta_z,
\end{equation*}
where the sum is taken over all complex zeros of $P$ counting multiplicities. Here, $\delta_z$ is the unit mass at $z$. The following fact is classical, see, e.g., \cite[Theorem~3]{gawronski_on_the_asymptotic} or~\cite[Theorem~3.2(b)]{dette_studden}.
\begin{theorem}[Asymptotic zero distribution of Hermite polynomials]\label{theo:zeros_hermite_classical_asymptotic_distr}
For $t>0$, the probability measures $\llbracket  \He_n(\cdot; t/n)\rrbracket_n$ converge weakly (as $n\to \infty$) to the semicircle distribution $\mathsf{sc}_t$ on the interval $[-2\sqrt t,2 \sqrt t]$ with Lebesgue density $x\mapsto \sqrt{4t - x^2}/(2\pi t)$.
\end{theorem}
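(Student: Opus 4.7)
The plan is to analyze the Cauchy transforms of the zero distributions and to obtain a PDE for their $n \to \infty$ limit. Setting $\mu_n := \llbracket \He_n(\cdot; t/n)\rrbracket_n$ and
$$
G_n(z,t) := \int \frac{\dd\mu_n(x)}{z-x} = \frac{1}{n}\,\frac{\partial_z \He_n(z;t/n)}{\He_n(z;t/n)},
$$
I would first exploit the fact that $u(z,t) := \He_n(z;t/n)$ satisfies the rescaled backward heat equation $\partial_t u = -\frac{1}{2n}\partial_z^2 u$. Combining this with the elementary log-derivative identity $\partial_z^2 u/u = \partial_z(\partial_z u / u) + (\partial_z u / u)^2$, a short manipulation yields the exact evolution
$$
\partial_t G_n + G_n\,\partial_z G_n + \frac{1}{2n}\partial_z^2 G_n = 0,
\qquad G_n(z,0) = \frac{1}{z},
$$
a viscous complex Burgers equation whose viscosity term disappears as $n \to \infty$.

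Next I would establish tightness of $(\mu_n)$ and an a priori bound on $G_n$. By the P\'olya--Benz theorem recalled above, all zeros of $\He_n(\cdot;t/n)$ are real, so each $G_n(\cdot,t)$ is holomorphic off $\R$ with the Nevanlinna bound $|G_n(z,t)| \le 1/|\Im z|$. The scaling identity~\eqref{eq:hermite_scaling} gives $\He_n(x;t/n) = (t/n)^{n/2}\He_n(x\sqrt{n/t})$, so $\mathrm{supp}\,\mu_n$ is obtained by shrinking the zero set of the classical $\He_n$ by $\sqrt{t/n}$; any standard crude bound on the zeros of $\He_n$ (e.g.\ $|y|\le 2\sqrt{n+1}$) then confines $\mathrm{supp}\,\mu_n$ to a fixed compact subset of $\R$ for large $n$, giving tightness.

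By Montel's theorem, every subsequence of $(G_n)$ admits a further subsequence converging locally uniformly on $\{z\in\C:\Im z > 0\}$ to some holomorphic $G$, which is the Cauchy transform of a subsequential weak limit $\mu$ of $(\mu_n)$. Cauchy's integral formula upgrades this to local uniform convergence of $\partial_z^2 G_n$, so the viscosity term vanishes in the limit and $G$ satisfies the inviscid complex Burgers equation $\partial_t G + G\,\partial_z G = 0$ with $G(z,0) = 1/z$. The method of characteristics gives $G = 1/z_0$ on the line $z = z_0 + t/z_0$, and after fixing the branch by $G(z,t)\sim 1/z$ at infinity, one obtains
$$
G(z,t) = \frac{z-\sqrt{z^2-4t}}{2t},
$$
the Cauchy transform of the semicircle $\mathsf{sc}_t$. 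By Stieltjes inversion $\mu = \mathsf{sc}_t$, and since every subsequential limit agrees, the full sequence $(\mu_n)$ converges weakly to $\mathsf{sc}_t$.

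The main obstacle is making the passage from the viscous equation at level $n$ to the inviscid limit fully rigorous: one needs both the tightness (to prevent mass from escaping to infinity) and the Montel/Cauchy argument (to turn the uniform Nevanlinna bound on $G_n$ into local uniform control on $\partial_z^2 G_n$) in order to conclude that the viscosity term is genuinely negligible on compact subsets of the upper half-plane. A much shorter alternative is to quote directly the classical asymptotic distribution of the zeros of the probabilists' Hermite polynomials and translate it via the scaling~\eqref{eq:hermite_scaling}, but the Burgers-equation approach sketched above is the one that will generalize naturally to the multiplicative Hermite and Laguerre polynomials studied in the remainder of the paper.
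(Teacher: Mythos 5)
Your argument is correct in outline and is a standard route to this classical fact, but note that the paper does not actually prove Theorem~\ref{theo:zeros_hermite_classical_asymptotic_distr} at all --- it is quoted from the literature (Gawronski; Dette--Studden) --- so there is no in-paper proof to match. What you propose is, in effect, the \emph{additive} prototype of the method the paper develops in Section~\ref{sec:proof_hermite} for the multiplicative Hermite polynomials: there the author also derives a (negative-viscosity) Burgers-type PDE for the log-derivative $\Sigma_n = \frac{x}{n}\partial_x P_n/P_n$ and passes to the inviscid limit. The implementations differ in one substantive way. You work directly with the Cauchy transform $G_n$ on the upper half-plane and rely on the Nevanlinna bound, Montel compactness, and Cauchy estimates to kill the viscous term and to extract subsequential limits; this is where your remaining gaps sit --- you need equicontinuity in $t$ (available from the PDE itself plus Cauchy estimates) to get a joint limit $G(z,t)$, and you should say a word about why the characteristic/implicit solution is the \emph{unique} admissible holomorphic solution with $G\sim 1/z$ at infinity before invoking Stieltjes inversion. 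The paper instead expands $\Sigma_n$ into moments, obtains a triangular ODE system for the power sums $\sigma_{k;n}(s)$, and proves convergence moment by moment with an explicit $O(1/n)$ rate by induction (Proposition~\ref{prop:eta_converge_hermite}), identifying the limit via Lagrange inversion; this avoids normal-family arguments entirely and is why the paper's version transfers painlessly to the unitary-circle setting. Both routes are valid; if you only want Theorem~\ref{theo:zeros_hermite_classical_asymptotic_distr} itself, the shortest correct proof is indeed the one you mention last: quote the classical Hermite zero asymptotics and rescale via~\eqref{eq:hermite_scaling}.
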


\begin{remark}
For $t<0$, \eqref{eq:hermite_scaling} entails that the zeros of $\He_n(\cdot; t/n)$ are purely imaginary and $\llbracket  \He_n(\cdot; t/n)\rrbracket_n$ converges weakly to a probability measure on $\ii   \R$ which is the image of $\mathsf{sc}_{-t}$ under the map $z \mapsto \ii z$.
\end{remark}

There has been some recent interest on how differential operators (such as repeated differentiation~\cite{bogvad_etal,campbell_etal_R_diagonal,HHJK_repeated_diff,hoskins_kabluchko,jalowy_kabluchko_marynych_zeros_profiles_part_I,jalowy_kabluchko_marynych_zeros_profiles_part_II,kiselev_tan_flow_repeated,martinez_finkelshtein_rakhmanov,Steiner21} or the heat flow~\cite{HHJK_heatflow,hoefert_kabluchko_jalowy,kabluchko2024leeyangzeroescurieweissferromagnet,tao_blog1,tao_blog2}) affect the asymptotic zero distribution of a high-degree polynomial. The subject has connections to free probability, PDE's, random matrices, Calogero--Moser systems  and (via the de Bruijn--Newman constant) the Riemann hypothesis. For the last point, see~\cite{newman_wu,polymath_heat_flow_riemann_zeta,rodgers_tao,tao_blog1,tao_blog2}.
The next theorem describes how the backward heat flow affects the asymptotic zero distribution of a high-degree real-rooted polynomial. It can be found in~\cite[Theorem~2.13]{kabluchko2024leeyangzeroescurieweissferromagnet}. (An equivalent result, stated in the language of Calogero--Moser ODE's can be found in~\cite[Theorem~1.1]{voit_woerner}.)

\begin{theorem}[Backward heat flow and the asymptotic distribution of zeros]\label{theo:heat_flow_zeroes_algebraic}
Let $(P_n(x))_{n\in \N}$ be a sequence of polynomials  such that $\deg P_n = n$ for all $n\in \N$. Fix $t>0$.  Suppose that all zeros of each $P_n$ are real and belong to some  interval $[-C,C]$. Suppose also that, as $n\to\infty$,  $\llbracket  P_n \rrbracket_n$ converges  to some probability measure  $\rho$ weakly on $\R$. Then,  for every $t\geq 0$,  $\llbracket  \eee^{- \frac t{2n} \partial_x^2 } P_n\rrbracket_n$ converges  to $\rho \boxplus \mathsf{sc}_t$ weakly on $\R$. Here, $\boxplus$ denotes the free additive convolution of probability measures on $\R$.
\end{theorem}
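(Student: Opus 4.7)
The plan is to pass to the Cauchy--Stieltjes transform and identify the limit via the complex Burgers equation. Write $P_n^{(t)}(x) := \eee^{-\frac{t}{2n}\partial_x^2} P_n(x)$; by the P\'olya--Benz theorem each $P_n^{(t)}$ is real-rooted for $t\geq 0$, so $\mu_n^{(t)} := \llbracket P_n^{(t)}\rrbracket_n$ is a probability measure on $\R$ whose Cauchy transform equals the rescaled logarithmic derivative
\[
G_n(z,t)\;=\;\frac{1}{n}\cdot\frac{\partial_x P_n^{(t)}(z)}{P_n^{(t)}(z)}\;=\;\int_{\R}\frac{\mu_n^{(t)}(\dd y)}{z-y},\qquad z\in\C\setminus\R.
\]

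A short computation (write $\partial_t P_n^{(t)}/P_n^{(t)} = -(2n)^{-1}\partial_x^2 P_n^{(t)}/P_n^{(t)}$, differentiate in $x$, and use the identity $\partial_x^2 u/u = \partial_x(\partial_x u/u) + (\partial_x u/u)^2$ with $u = P_n^{(t)}$) then yields a viscous complex Burgers equation with vanishing viscosity,
\[
\partial_t G_n(z,t) + G_n(z,t)\,\partial_z G_n(z,t)\;=\;-\frac{1}{2n}\,\partial_z^2 G_n(z,t).
\]
As $n\to\infty$ the viscosity term is expected to vanish and any limit $G(z,t)$ should satisfy the inviscid complex Burgers equation $\partial_t G + G\,\partial_z G = 0$ with initial datum $G(\cdot,0) = G_\rho$. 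By the method of characteristics this forces $G(z + tw, t) = w$ whenever $G_\rho(z) = w$; equivalently $G(\cdot,t)^{-1}(w) = G_\rho^{-1}(w) + tw$, which is exactly the $R$-transform identity $R_{\rho\boxplus\mathsf{sc}_t}(w) = R_\rho(w) + tw$ characterizing the Cauchy transform of $\rho\boxplus\mathsf{sc}_t$.

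To turn this into a proof, observe that $|G_n(z,t)| \leq 1/|\Im z|$ uniformly in $n$ and $t$ on the open upper half-plane, so $\{G_n(\cdot,t)\}_n$ is automatically a normal family there and, by Cauchy's integral formula, its first two $z$-derivatives are locally uniformly bounded as well. Consequently the viscosity term $\frac{1}{2n}\partial_z^2 G_n$ tends to $0$ locally uniformly, and any subsequential locally uniform limit $G$ solves the inviscid Burgers equation classically on $\{\Im z > 0\}$. The initial condition $G(\cdot,0) = G_\rho$ follows from the hypothesis $\mu_n^{(0)} \to \rho$. Uniqueness of the characteristic solution within the class of Cauchy transforms of probability measures---a standard consequence of Biane's subordination theory for free additive convolution---identifies every subsequential limit as $G_{\rho\boxplus\mathsf{sc}_t}$, so the full sequence $G_n(\cdot,t)$ converges pointwise on $\C\setminus\R$.

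The last ingredient is tightness of $\{\mu_n^{(t)}\}_n$, which upgrades the Cauchy-transform convergence to the asserted weak convergence via Stieltjes--Perron inversion. Tightness follows from a direct second-moment bound: the Calogero--Moser dynamics $\dot z_k = \frac{1}{n}\sum_{j\neq k}(z_k - z_j)^{-1}$ for the zeros under the backward heat flow symmetrizes to $\frac{\dd}{\dd t}\frac{1}{n}\sum_k z_k^2 = \frac{n-1}{n}$, whence $\int y^2\,\mu_n^{(t)}(\dd y) \leq C^2 + t$ uniformly in $n$. I expect the main technical point to be the rigorous passage to the PDE limit---specifically, verifying that the candidate limit $G$ is continuous up to $t = 0$ with the correct boundary value $G_\rho$, and that the characteristic-method uniqueness applies within the class of Cauchy transforms of probability measures supported on $\R$.
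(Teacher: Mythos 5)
Your argument is essentially correct, but note that the paper does not actually prove Theorem~\ref{theo:heat_flow_zeroes_algebraic}: it quotes it from \cite[Theorem~2.13]{kabluchko2024leeyangzeroescurieweissferromagnet} (and the Calogero--Moser formulation in \cite{voit_woerner}). The closest in-paper comparison is the method of Sections~4--5 for the multiplicative analogues, and your route is the same in spirit --- logarithmic derivative, viscous Burgers equation with viscosity $O(1/n)$, passage to the inviscid limit, identification via the $R$-transform/Lagrange--inversion picture --- but differs in execution. The paper extracts from the Burgers PDE a \emph{triangular ODE system for the moments} $\sigma_{k;n}$, proves an explicit $O(1/n)$ error bound by induction on $k$ (Proposition~\ref{prop:eta_converge_hermite}), and only then converts moment convergence into weak convergence; this yields quantitative rates but requires uniformly compactly supported zeros so that the method of moments applies. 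You instead work directly with the Cauchy transforms, using the a priori bound $|G_n|\le 1/|\Im z|$, Montel/Cauchy estimates to kill the viscosity term, and characteristics plus subordination to identify subsequential limits; this is softer, needs no moment determinacy, and your Calogero--Moser second-moment identity $\frac{\dd}{\dd t}\frac1n\sum_k z_k^2=\frac{n-1}{n}$ is a clean substitute for tightness. Two points you should tighten: (i) to extract a subsequential limit solving the PDE jointly in $(z,t)$ you need equicontinuity in $t$, which does follow from the PDE together with your locally uniform bounds on $G_n,\partial_z G_n,\partial_z^2G_n$, but should be said; (ii) the uniqueness step need not be outsourced to subordination theory --- for $\Im z_0$ large the characteristic $z_0+tG_\rho(z_0)$ stays in the upper half-plane because $|G_\rho(z_0)|\le 1/\Im z_0$, which pins down any limit on an open set, and analyticity in $z$ finishes the identification. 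Also, the Calogero--Moser derivation tacitly assumes simple roots; the identity is safest obtained from Newton's identities applied to the coefficient flow $\dot a_j=-\frac{1}{2n}(j+1)(j+2)a_{j+2}$, which gives $\dot p_2=n-1$ without any non-collision hypothesis.
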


\begin{example}
Taking $P_n(x) = x^n$, which has $\rho = \delta_0$, and recalling that $\He_n(x;t/n) = \eee^{- \frac t{2n} \partial_x^2 } x^n$, we recover  Theorem~\ref{theo:zeros_hermite_classical_asymptotic_distr}.
\end{example}

\section{Main results on multiplicative Hermite polynomials}
\subsection{Definition of multiplicative Hermite polynomials}
In the present paper, we shall be interested in the multiplicative analogues of the results described above. To pass from the ``additive'' setting described in Section~\ref{subsec:hermite_polys_additive} to the multiplicative one, we use the exponentiation map $x\mapsto \eee^x$, which is an isomorphism between the additive group $(\R, +)$ and the multiplicative group $(\R_{>0}, \cdot)$. The image of the Lebesgue measure $\dd x$ on $\R$ under the exponentiation map is the measure $\dd x/x$ on $\R_{>0}$. The map $U: L^2 (\R_{>0}, \dd x /x) \to L^2(\R, \dd x)$ given by $(U f)(x) = f(\eee^x)$ is an isomorphism of Hilbert spaces and intertwines the operator $\partial_x$ acting on smooth functions in $L^2(\R, \dd x)$ with the operator $x\partial_x$ acting on smooth functions in $L^2 (\R_{>0}, \dd x/x)$:
$$
U  \cdot x\partial_x  = \partial_x \cdot  U
$$

The multiplicative analogue of the heat operator is the operator $\eee^{\frac{s}{2}(x\partial_x)^2}: \C[x] \to \C[x]$, where $s\in \C$ is the ``time''. Since $x\partial_x x^k = k x^k$, $k\in \N_0$,  it is natural to define
$$
\eee^{\frac{s}{2}(x\partial_x)^2} x^k = \eee^{\frac s2 k^2} x^k,  \qquad k\in \N_0.
$$
If $P(x)\in \R[x]$ is a polynomial, then the solution of the multiplicative heat equation
\[
\partial_s v(x; s)  = \frac{1}{2} (x\partial_x)^2 v(x;s), \qquad v(x;0)=P(x), \qquad x>0, \; s\in \R,
\]
can be written as
$$
v(x;s) =\eee^{\frac{s}{2}(x\partial_x)^2} P(x), \qquad s\in \R.
$$

The multiplicative analogue of the polynomial $x^n$, for which we have $\llbracket  x^n \rrbracket_n= \delta_0$, is the polynomial $(x-1)^n$, which satisfies  $\llbracket  (x-1)^n \rrbracket_n= \delta_1$. As a multiplicative  analogue of~\eqref{eq:hermite_polys_def}, we consider the following

\begin{definition}
The \emph{multiplicative Hermite polynomials} with parameter $s\in \C$ are defined by
\begin{equation}\label{eq:hermite_multipl_def}
H_n^*(x;s) := \eee^{-\frac s2 ((x\partial_x)^2 - n x \partial_x) } (x-1)^n  = \sum_{j=0}^n (-1)^{n-j} \binom nj \eee^{-\frac s2 (j^2 - nj)} x^j, \qquad n\in \N_0.
\end{equation}
\end{definition}
Observe that, unlike in the classical Hermite case, $s$ is not just a scaling parameter.

\begin{remark}\label{rem:hermite_mult_symmetry}
The terms $-nx \partial_x$  and $-nj$ appearing in~\eqref{eq:hermite_multipl_def} were introduced for the following reason.  With these terms,  \eqref{eq:hermite_multipl_def}  implies that $z^n H_n^*(1/z;s) = (-1)^n H_n^*(z;s)$. In particular, the zeros of $H_n^*(\cdot; s)$ are invariant w.r.t.\ the map $x\mapsto 1/x$. This symmetry around $1$ is the multiplicative analogue of the property $\He_n(-z;s) = (-1)^n \He_n(z;s)$ which implies that the zeros of $\He_n$ are symmetric w.r.t.\ $0$.
\end{remark}
\begin{remark}
The polynomials $H_n^*(x;s)$ appeared in the work of Mirabelli~\cite[Section~3.2]{mirabelli2021hermitian} as limits in  the finite free multiplicative analogue of the central limit theorem. Similarly, the classical Hermite polynomials are the limits in the finite free additive analogue of the central limit theorem, see~\cite{marcus2021polynomial}.   In~\cite{kabluchko2024leeyangzeroescurieweissferromagnet}, the expected characteristic polynomial of the Brownian motion on the unitary group $U(n)$ has been expressed in terms of $H_n^*(x;s)$ with $s\leq 0$. Tao~\cite{tao_blog2} discussed these polynomials in connection with  the finite-field version of the de Bruijn--Newman constant. A Calogero--Moser-type ODE system satisfied by the roots of $H_n^*(x;s)$ can be found in~\cite{tao_blog2} or~\cite[Section~3.2.5]{mirabelli2021hermitian}.
\end{remark}

\subsection{Zeros of multiplicative Hermite polynomials}
Recall that $\He_n(x;t)$ is real-rooted for $t\geq 0$, while for $t\leq 0$, all zeros of $\He_n(x;t)$ belong to the imaginary axis $\ii   \R$. For $t=0$, both claims are true since $\He_n(x;0) = x^n$. The next proposition is a multiplicative version of this result.
\begin{proposition}[Positive and unitary zeros]\label{prop:mult_hermite_zeros_positive_unitary}
Let $n\in \N$.
\begin{itemize}
\item[(a)]
The positive case: For $s\geq 0$, all zeros of $H_n^*(\cdot; s)$ are real and positive.
\item[(b)]
The unitary case: For $s\leq 0$, all zeros of $H_n^*(\cdot; s)$ are located on the unit circle $\{z\in \mathbb C: |z| =1\}$.  \item[(c)] For all $s\in \C$, the multiset of zeros of $H_n^*(\cdot; s)$ is invariant w.r.t.\ the map $z\mapsto 1/z$.
\end{itemize}
\end{proposition}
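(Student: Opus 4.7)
Part (c) follows directly from the coefficient symmetry already explained in Remark~\ref{rem:hermite_mult_symmetry}: the coefficient of $x^j$ in $H_n^*(x;s)$ is $a_j = (-1)^{n-j}\binom{n}{j}\eee^{-\frac{s}{2}j(j-n)}$, and the elementary identity $j(j-n) = (n-j)((n-j)-n)$ gives $a_{n-j} = (-1)^n a_j$. Equivalently, $z^n H_n^*(1/z;s) = (-1)^n H_n^*(z;s)$, so the root multiset is invariant under $z\mapsto 1/z$; since $a_0 = (-1)^n \neq 0$, the point $0$ is not a root.

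For parts (a) and (b), my plan is to exploit the reciprocal symmetry by passing to the variable $u = x + x^{-1}$, which maps $\R_{>0}$ two-to-one onto $[2,\infty)$ and the unit circle two-to-one onto $[-2,2]$. The symmetry (c), combined with the observation that $x=1$ is a forced root when $n$ is odd, produces a factorization
\[
H_n^*(x;s) = x^{\lfloor n/2\rfloor}(x-1)^{\epsilon_n}\,R_n(u;s),
\]
where $\epsilon_n\in\{0,1\}$ is the parity of $n$ and $R_n(\,\cdot\,;s)$ is a polynomial in $u$ of degree $\lfloor n/2\rfloor$. Thus (a) and (b) reduce to showing that $R_n(\,\cdot\,;s)$ has all its roots in $[2,\infty)$ for $s\geq 0$, respectively in $[-2,2]$ for $s\leq 0$. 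A direct substitution of this factorization into the defining PDE $\partial_s H_n^* = -\tfrac{1}{2}((x\partial_x)^2-nx\partial_x)H_n^*$ yields a one-dimensional equation of Chebyshev type,
\[
\partial_s R_n = -\tfrac{1}{2}\bigl[(u^2-4)\partial_u^2 + u\,\partial_u - c_n\bigr]R_n,\qquad R_n(u;0) = (u-2)^{\lfloor n/2\rfloor},
\]
for a constant $c_n$ depending only on $n$ (equal to $(n/2)^2$ when $n$ is even). The operator in brackets is diagonalized by the Chebyshev polynomials $T_m(u/2)$ with eigenvalue $m^2$, so expanding $(u-2)^{\lfloor n/2\rfloor}$ in this basis and applying the diagonal evolution gives an explicit Chebyshev-series representation of $R_n(u;s)$.

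The main technical obstacle is to verify the claimed root locations of this explicit polynomial. The cleanest route I see invokes the matrix-model interpretation recorded in the paper's remarks: for $s\leq 0$, $H_n^*(x;s)$ is, up to a harmless constant, the expected characteristic polynomial of Brownian motion on $U(n)$~\cite{kabluchko2024leeyangzeroescurieweissferromagnet}, whose roots lie on the unit circle for every realization; preservation under expectation follows from finite free multiplicative probability, using Mirabelli's realization~\cite[Sec.~3.2]{mirabelli2021hermitian} of $H_n^*$ as a limit of iterated finite free multiplicative convolutions $\boxtimes_n$, combined with the fact that $\boxtimes_n$ preserves unitary-rootedness, and a Hurwitz-type limit argument. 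The analogous matrix model for $s\geq 0$ uses multiplicative Brownian motion on positive definite Hermitian matrices; preservation of positive-real-rootedness under $\boxtimes_n$ is Szeg\H o's classical composition theorem. In both cases, one must rule out the possibility that roots escape through the endpoints of the claimed interval (i.e., through $\{-2,2\}$ in the unitary case, or through $\{0,\infty\}$ in the positive case); this follows from continuity of the roots in $s$ together with the fact that the leading and constant coefficients of $H_n^*(\,\cdot\,;s)$ are both $\pm 1$ and independent of $s$.
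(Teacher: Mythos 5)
Part (c) of your argument is exactly the paper's: the coefficient symmetry of Remark~\ref{rem:hermite_mult_symmetry} gives $z^nH_n^*(1/z;s)=(-1)^nH_n^*(z;s)$, and $0$ is not a root. For (a) and (b) the paper's proof is a two-line citation (Lemma~2.1 of \cite{kabluchko2024leeyangzeroescurieweissferromagnet} for $s<0$, Theorem~5.3 of \cite{jalowy_kabluchko_marynych_zeros_profiles_part_II} for $s>0$), so your more elaborate sketch is a genuinely different presentation; its ultimate engine --- Mirabelli's realization of $H_n^*$ as a limit of iterated $\boxtimes_n$-powers of polynomials with positive/unitary roots, Szeg\H{o}'s composition theorem (Proposition~\ref{prop:finite_free_preserces_positive_zeros_or_unit_circle}) to propagate the root location through each convolution, and Hurwitz to pass to the limit (the limit has degree exactly $n$ since the leading coefficient of $H_n^*$ is $1$) --- is a sound and standard route, essentially the content of the references the paper leans on. Two things should be cleaned up. First, the entire Chebyshev reduction ($u=x+x^{-1}$, the diagonalization by $T_m(u/2)$ with eigenvalue $m^2-n^2/4$) is correct as algebra but is then abandoned: you concede it does not by itself locate the roots, and nothing from it is used afterwards, so it is dead weight in the proof. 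Second, the phrase ``whose roots lie on the unit circle for every realization; preservation under expectation follows from finite free multiplicative probability'' is misleading as written --- taking expectations of characteristic polynomials does \emph{not} preserve root locations in general; the actual mechanism is that the expected characteristic polynomial is identified with an iterated finite free multiplicative convolution, to which Szeg\H{o}'s theorem applies, which is what your subsequent sentence says. If you go the iterated-convolution route, be explicit that the factors are Mirabelli's CLT-type factors with manifestly unitary (resp.\ positive) roots, and not $H_n^*(\cdot;s/m)$ itself, since the latter would be circular. Finally, the worry about roots ``escaping through endpoints'' is vacuous in the unitary case (the circle is compact with empty boundary, so Hurwitz alone suffices), and in the positive case it is handled exactly as you say, by the nonvanishing of the constant and leading coefficients.
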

\begin{proof}
For the case $s<0$, see Lemma~2.1 in~\cite{kabluchko2024leeyangzeroescurieweissferromagnet}. For the case $s>0$, see~\cite[Theorem~5.3]{jalowy_kabluchko_marynych_zeros_profiles_part_II}. For $s=0$, we have $H_n^*(x;0) = (x-1)^n$ and all zeros are equal to $1$. Part~(c) has been shown in Remark~\ref{rem:hermite_mult_symmetry}.
\end{proof}

Let $z_{1;n}(s)\in \C,\dots,z_{n;n}(s)\in \C$ be the zeros of $H_n^*(\cdot;s/n)$ (counted with multiplicity and listed in some order). Note the $1/n$-scaling in the ``time'' variable.   The zero distribution of $H_n^*(\cdot;s/n)$ is the probability measure
$$
\llbracket  H_n^*(\cdot; s/n)\rrbracket_n := \frac 1n \sum_{j=1}^n  \delta_{z_{j;n}(s)}.
$$
In the next theorem, we identify the weak limit of $\llbracket  H_n^*(\cdot; s/n)\rrbracket_n$ as $n\to\infty$.

\begin{theorem}[Asymptotic zero distribution of multiplicative Hermite polynomials]\label{theo:multiplicative_hermite_polys_zeros}
Fix some $s\in \mathbb R$. As $n\to\infty$, the probability measure $\llbracket  H_n^*(\cdot; s/n)\rrbracket_n$ converges weakly to $\mu^{(s)}$,  a probability measure on $\C$ uniquely characterized by the following properties:
\begin{itemize}
\item[(a)] The positive case: For $s\geq 0$, $\mu^{(s)}$ is supported on a compact subset of $(0,\infty)$.
\item[(b)] The unitary case: For $s\leq 0$, $\mu^{(s)}$ is supported on the unit circle $\{|z| = 1\}$.
\item[(c)] For every $s\in \R$, the measure $\mu^{(s)}$ is invariant with respect to the map $z\mapsto 1/z$.
\item[(d)] For every $s\in \R$, the moments of $\mu^{(s)}$ are given by
\begin{align}
\int_\C x^k \mu^{(s)}(\dd x)
&=
\frac{\eee^{sk/2}}{k} [u^{ k-1}] \left((1+u)^{k} \eee^{ sk u}\right) \label{eq:mu_s_moments1}
\\
&=
\frac{\eee^{sk/2}}{k}\sum_{j=0}^{k-1}\binom{k}{j+1} \frac{(s k)^{ j}}{j!} \label{eq:mu_s_moments2}
\\
&=
\frac{\eee^{sk/2}}{k} L_{k-1}^{(1)}(-ks),
\qquad
k\in \mathbb N, \label{eq:mu_s_moments3}
\end{align}
where $[u^{m}] f(u)$ is the coefficient of $u^m$ in the Taylor series of $f(u)$ and $L_n^{(\alpha)}(x) = \sum_{j=0}^{n}\binom{n+\alpha}{n-j}\frac{(-x)^{j}}{j!}$ are the associated Laguerre polynomials.
\end{itemize}
\end{theorem}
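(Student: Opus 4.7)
The plan is to derive a closed moment recursion directly from the PDE defining $H_n^*$, pass to the limit $n \to \infty$, and identify the result via Lagrange inversion. Write $\Pn(x,s) := H_n^*(x;s/n)$, which by definition satisfies $\partial_s \Pn = -\tfrac{1}{2n}((x\partial_x)^2 - n x\partial_x)\Pn$. The natural object is the normalized logarithmic $x\partial_x$-derivative $F_n(x,s) := \tfrac{1}{n}\, x\partial_x \Pn / \Pn$, whose expansion at $x = \infty$ takes the form $F_n(x,s) = 1 + \sum_{k \geq 1} m_k^{(n)}(s)\, x^{-k}$ with $m_k^{(n)}(s) = \int x^k\, \llbracket \Pn(\cdot, s)\rrbracket_n(\dd x)$. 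A short computation, using $(x\partial_x)^2\Pn/\Pn = x\partial_x \Pn/\Pn + (x\partial_x \Pn/\Pn)^2 + x^2 \partial_x(\partial_x \Pn/\Pn)$, shows that $F_n$ satisfies
\begin{equation*}
\partial_s F_n \;=\; \tfrac{n-1}{2n}\, x\partial_x F_n \;-\; x F_n \partial_x F_n \;-\; \tfrac{1}{2n}\, x^2 \partial_x^2 F_n.
\end{equation*}
Reading off the coefficient of $x^{-k}$ then yields the closed recursion
\begin{equation*}
\bigl(m_k^{(n)}\bigr)'(s) \;=\; \tfrac{k(n-k)}{2n}\, m_k^{(n)}(s) \;+\; \sum_{j=1}^{k-1} j\, m_j^{(n)}(s)\, m_{k-j}^{(n)}(s), \qquad m_k^{(n)}(0) = 1,
\end{equation*}
where the initial condition comes from $\Pn(\cdot, 0) = (x-1)^n$.

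Passing to the limit $n \to \infty$ by induction on $k$ gives a limiting ODE system
\begin{equation*}
m_k'(s) = \tfrac{k}{2}\, m_k(s) + \sum_{j=1}^{k-1} j\, m_j(s)\, m_{k-j}(s), \qquad m_k(0) = 1,
\end{equation*}
which has a unique solution obtained sequentially by elementary integration. To identify this solution with (d), I would introduce the generating function $\Phi(z,s) := \sum_{k \geq 1} m_k(s)\, \eee^{-sk/2}\, z^k$; multiplying the recursion by $z^k$ and summing, $w := \Phi$ turns out to satisfy the Burgers-type relation $\partial_s w = w\cdot z \partial_z w$, whose solution with the given initial data is the implicit equation $\Phi = z(1+\Phi)\, \eee^{s\Phi}$ (checked by implicit differentiation). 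Lagrange inversion then gives
\begin{equation*}
[z^k]\Phi(z,s) \;=\; \frac{1}{k}[u^{k-1}]\bigl((1+u)^k \eee^{sku}\bigr),
\end{equation*}
which is exactly~\eqref{eq:mu_s_moments1} after multiplying by $\eee^{sk/2}$. The identities \eqref{eq:mu_s_moments2}--\eqref{eq:mu_s_moments3} then follow from the Cauchy product expansion of $(1+u)^k \eee^{sku}$ and the defining series of $L_{k-1}^{(1)}$.

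To upgrade moment convergence to weak convergence of measures, I would invoke Proposition~\ref{prop:mult_hermite_zeros_positive_unitary}. In the unitary case ($s \leq 0$), the supports of $\llbracket \Pn(\cdot, s)\rrbracket_n$ lie on the compact unit circle, so moment convergence automatically gives weak convergence to the unique probability measure on the circle with those moments, establishing (b). In the positive case ($s \geq 0$), the $z\mapsto 1/z$ invariance (part (c) of Proposition~\ref{prop:mult_hermite_zeros_positive_unitary}, inherited from Remark~\ref{rem:hermite_mult_symmetry}) together with a uniform bound $\max_i z_{i;n}(s) \leq C(s)$ independent of $n$ gives tightness and compactness of the support of the limit; the needed bound is consistent with the growth rate $\limsup_k m_k(s)^{1/k} < \infty$ read off from the Lagrange formula. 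Property (c) of the limit is inherited termwise from the corresponding property of each $\Pn$.

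The main obstacle I expect is precisely the uniform control of the support in the positive case: the PDE/moment machinery identifies the candidate limit and pins down its moments, but does not by itself rule out a small fraction of roots escaping to $\infty$ (or, dually via the $z \mapsto 1/z$ symmetry, to $0$). The cleanest way around this is a Cauchy-type estimate on the largest root of $\Pn$ obtained from the coefficient bounds $|e_k(\Pn)| = \binom{n}{k}\, \eee^{sk/2 - sk^2/(2n)}$, which after passing to the limit matches the moment growth predicted by Lagrange inversion and thus closes the loop.
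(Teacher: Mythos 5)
Your proposal follows essentially the same route as the paper: a PDE for the polynomial, a Burgers-type equation for the normalized logarithmic $x\partial_x$-derivative, coefficient extraction giving a triangular moment recursion, an inviscid $n\to\infty$ limit, and identification via Lagrange inversion. Your finite-$n$ recursion $\bigl(m_k^{(n)}\bigr)'=\tfrac{k(n-k)}{2n}m_k^{(n)}+\sum_{j=1}^{k-1}j\,m_j^{(n)}m_{k-j}^{(n)}$ is correct and is exactly the paper's system~\eqref{eq:ODE_system_finite_n} transported through the substitution $m_k^{(n)}(s)=\eee^{-sk/2}\sigma_{k;n}(s)$ (the paper removes the drift $n x\partial_x$ by rescaling $x\mapsto\eee^{-s/2}x$ first; you keep it in the PDE — cosmetically different, equivalent). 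The passage to the limiting ODE system is asserted rather than proved, but your induction-on-$k$ plan is precisely what the paper carries out in Proposition~\ref{prop:eta_converge_hermite} via uniform bounds and a Gronwall-type estimate, so nothing is conceptually missing there.

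The one place where your argument goes astray is the final paragraph on the positive case. First, the obstacle you identify is not actually an obstacle: since the $\llbracket P_n\rrbracket_n$ are probability measures on $[0,\infty)$ whose moments of every order converge to the moments of a compactly supported (hence moment-determinate) measure, the boundedness of, say, the second moments already gives tightness and uniform integrability of $x^k$ along the sequence, so every subsequential weak limit has the limiting moments and must equal $\mu^{(s)}$; no mass can escape to $\infty$ or to $0$. This is the standard method of moments, which is all the paper invokes. Second, the fix you propose would not work as stated: the Cauchy bound on the largest root obtained from the coefficients $\binom{n}{k}\eee^{sk/2-sk^2/(2n)}$ grows like $2^n\eee^{|s|n/2}$ (see Remark~\ref{rem:analyticity_sigma_cauchy_bound}), so it gives no bound uniform in $n$ on the support of $\llbracket P_n\rrbracket_n$. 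Replace that step by the standard moment-method argument and the proof closes correctly.
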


\begin{figure}[t]
  \centering
  \includegraphics[width=0.5\textwidth]{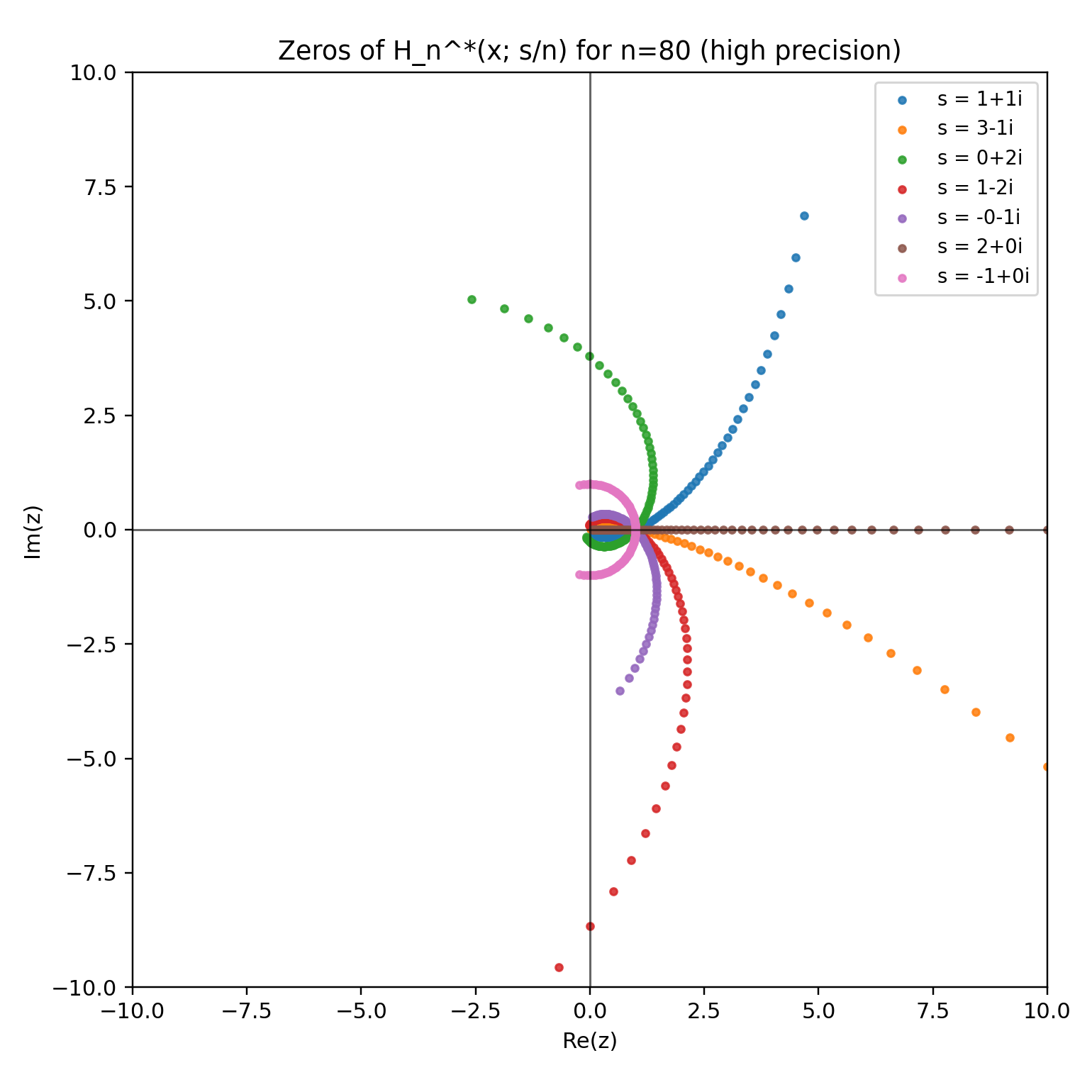}
  \caption{Zeros of $H_n^*(x;s/n)$ for $n=80$ and
  $s\in\{1+\ii, 3-\ii, 2\ii, 1-2\ii, -\ii, 2,-1\}$, plotted in the window $[-10,10]\times[-10,10]$.}
  \label{fig:zeros-overlay-n80-seven-s}
\end{figure}

For $s<0$, Theorem~\ref{theo:multiplicative_hermite_polys_zeros} was proved in~\cite{kabluchko2024leeyangzeroescurieweissferromagnet} using the saddle-point method. For $s>0$, it was proved in~\cite{jalowy_kabluchko_marynych_zeros_profiles_part_II} using exponential profiles (see~\cite{jalowy_kabluchko_marynych_zeros_profiles_part_I}). In Section~\ref{sec:proof_hermite} of the present paper, we prove Theorem~\ref{theo:multiplicative_hermite_polys_zeros} by a new method which has the advantage that it uniformly applies to both cases, $s<0$ and $s>0$. After minimal modifications, the method establishes convergence of analytic moments of $\llbracket  H_n^*(\cdot; s/n)\rrbracket_n$ for every \emph{complex} $s$. If $s$ is real, the zeros are positive/unitary, and weak convergence can be deduced from the convergence of analytic moments. For general complex $s$, numerical simulations, see Figure~\ref{fig:zeros-overlay-n80-seven-s}, suggest the following

\begin{conjecture}
For every complex $s\in \C$, the probability measures $\llbracket  H_n^*(\cdot; s/n)\rrbracket_n$ converge to some limit probability measure $\mu^{(s)}$ on $\C$ whose analytic moments are given by~\eqref{eq:mu_s_moments1}, \eqref{eq:mu_s_moments2}, \eqref{eq:mu_s_moments3}.
\end{conjecture}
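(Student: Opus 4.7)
The plan is to upgrade the convergence of analytic moments --- which the new method of the paper, according to the remark just after Theorem~\ref{theo:multiplicative_hermite_polys_zeros}, already delivers for arbitrary $s\in\C$ --- to weak convergence of the empirical zero distributions $\mu_n := \llbracket H_n^*(\cdot;s/n)\rrbracket_n$. Three ingredients are needed: convergence of positive analytic moments (available), the $z\mapsto 1/z$ symmetry of the zero set, and an \emph{a priori} confinement of the zeros to a fixed annulus independent of $n$.

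First, the identity $z^n H_n^*(1/z;s) = (-1)^n H_n^*(z;s)$ of Remark~\ref{rem:hermite_mult_symmetry} follows directly from the coefficient formula~\eqref{eq:hermite_multipl_def} and hence holds for every $s\in\C$. It forces the zero multiset of $H_n^*(\cdot;s/n)$ to be $z\leftrightarrow 1/z$-invariant, so
\[
\int_\C z^{-k}\,\mu_n(\dd z) = \int_\C z^{k}\,\mu_n(\dd z), \qquad k\in\N,
\]
and the assumed convergence of positive moments therefore upgrades to convergence of all Laurent moments of $\mu_n$.

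Second, and this is where the main work lies, I would establish $0<r(s)\leq R(s)<\infty$ such that every zero of $H_n^*(\cdot;s/n)$ lies in $\{r(s)\leq |z|\leq R(s)\}$ for all $n$. By the symmetry it suffices to exhibit an upper bound. The natural route is a saddle-point analysis: setting $z=\eee^\tau$, $j=tn$, Stirling's formula yields
\[
H_n^*(\eee^\tau;s/n) = \sum_{j=0}^n (-1)^{n-j}\binom{n}{j}\eee^{-\frac{s}{2n}(j^2-nj)+j\tau} \sim \sum_{j=0}^n (-1)^{n-j}\eee^{n\Phi(j/n;\tau,s)},
\]
with $\Phi(t;\tau,s):=-t\log t-(1-t)\log(1-t)-\tfrac{s}{2}(t^2-t)+\tau t$. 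For $|\Re\tau|$ sufficiently large relative to $|s|$, a single critical point of $\Phi(\cdot;\tau,s)$ (after absorbing the $(-1)^{n-j}$ into a $\pi\ii$ shift of $\tau$) should strictly dominate all others, making cancellation impossible and precluding zeros.

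Third, once the zeros sit in a compact annulus $A\subset\C\setminus\{0\}$, I would extend the saddle-point analysis to establish
\[
U_n(z):=\tfrac{1}{n}\log|H_n^*(z;s/n)| \longrightarrow U(z) \quad\text{in } L^1_{\mathrm{loc}}(\C).
\]
Since $\mu_n=\tfrac{1}{2\pi}\Delta U_n$ in the sense of distributions, this gives $\mu_n\to\tfrac{1}{2\pi}\Delta U=:\mu^{(s)}$ weakly on $\C$, and the moment identities \eqref{eq:mu_s_moments1}--\eqref{eq:mu_s_moments3} follow from step one by passing to the limit.

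The main obstacle is step two in the complex-$s$ regime. For $s\in\R$, Proposition~\ref{prop:mult_hermite_zeros_positive_unitary} gives the annular bound at no cost; but when $\Im s\neq 0$ the coefficients carry an oscillating phase $\eee^{-\ii\Im(s)(j^2-nj)/(2n)}$ that defeats elementary Cauchy/Fujiwara-type bounds, and one must carefully track the complex critical-point geometry of $\Phi(\cdot;\tau,s)$, identify the relevant anti-Stokes curves, and rule out escape of zeros along non-generic directions. The numerical evidence cited in the paper strongly supports the conjecture, but making these saddle-point estimates uniform in $n$ (or at least on the complement of a fixed compact annulus) is the core technical challenge.
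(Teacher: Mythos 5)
The statement you are addressing is stated in the paper as an open \emph{conjecture}: the authors prove convergence of the analytic moments of $\llbracket H_n^*(\cdot;s/n)\rrbracket_n$ for every complex $s$, but explicitly stop short of weak convergence because, off the real axis, the zeros are no longer confined to the unit circle or the positive half-line, and Laurent moments do not determine a measure on $\C$ (for instance, the uniform measures on circles of any radius all have vanishing $k$-th Laurent moments for $k\neq 0$). Your proposal correctly diagnoses this obstruction and outlines the standard potential-theoretic remedy, but it does not close the gap: it is a research plan, not a proof.

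Concretely, two essential analytic inputs are asserted rather than established. First, the $n$-uniform confinement of the zeros to a fixed annulus $\{r(s)\le|z|\le R(s)\}$: the elementary Cauchy bound (cf.\ Remark~\ref{rem:analyticity_sigma_cauchy_bound}) only gives $|z|\lesssim 2^n\eee^{cn}$, which is useless here, and the saddle-point dominance you invoke (``a single critical point should strictly dominate'') is exactly the uniform-in-$n$ estimate that is missing; you concede as much in your final paragraph. Second, the $L^1_{\mathrm{loc}}$ convergence of $\tfrac1n\log|H_n^*(\cdot;s/n)|$ is likewise only announced. Without these two steps the argument does not get off the ground: moment convergence plus the $z\mapsto 1/z$ symmetry (which you verify correctly, and which is valid for all $s\in\C$) yields at best subsequential weak limits after one has tightness, and even then the Laurent moments cannot identify the limit. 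Your first step is sound and your overall strategy is plausible and consistent with how such results are proved elsewhere in the literature, but as written the proposal leaves the conjecture exactly as open as the paper does.
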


\subsection{Free multiplicative normal distributions}
The probability measure $\mu^{(s)}$ is the free multiplicative normal distribution (unitary for $s\leq 0$ and positive for $s\geq 0$). For $s=0$, $\mu^{(0)}$ is the unit mass at $1$. These distributions appeared in the work of Bercovici and Voiculescu~\cite[Lemmas~6.3, 7.1]{bercovici_voiculescu_levy_hincin}, were extensively studied by Biane~\cite{biane,biane_segal_bargmann} and later in~\cite{Cebron2016AOP,Demni2017JOT,demni2016spectraldistributionlargesizebrownian,demni2011spectraldistributionfreeunitary,driver_hall_kemp,DriverHallKemp2013SB,zhong_free_brownian,zhong_free_normal}.
In free stochastic calculus~\cite{Biane1997FIC,Biane2001AIHP,BianeSpeicher1998PTRF}, the multiplicative normal distributions appear as the marginal distributions of the free multiplicative Brownian motions (unitary or positive).

In the next proposition, we list some properties of $\mu^{(s)}$ which can be found in these papers,
but first we need to recall some notions from free probability theory. For a compactly supported probability measure $\mu$ on $\mathbb C$ with moments $m_k=\int_{\C} x^k \mu(\dd x)$, $k\in \mathbb N_0$, the moment series $M_\mu(t)$, the $\psi$-transform $\psi_\mu(t)$,  and the Cauchy transform $G_\mu(z)$ are defined by
\begin{equation}\label{eq:M_G_transform}
M_\mu(t)=\sum_{k=0}^\infty m_k t^k,
\qquad
\psi_\mu(t)=M_\mu(t)-1,
\qquad
G_\mu(z)
=
\int_{\mathbb R}\frac{\mu(\dd x)}{z-x} =\frac{1}{z} M_\mu\left(\frac{1}{z}\right).
\end{equation}
The $R$-transform can be defined by either of the following formulas:
\begin{equation}\label{eq:R_transform}
G_\mu^{-1}(w)=\frac{1}{w}+R_\mu(w),
\qquad
M_\mu(t)=1+t M_\mu(t) R_\mu\left(t M_\mu(t)\right).
\end{equation}
The free cumulants $\kappa_1,\kappa_2,\ldots$ are defined by  $R_\mu(w)=\sum_{k\ge1}\kappa_k w^{k-1}$. The $S$-transform of $\mu$ is defined by
\begin{equation}\label{eq:S_transform_def}
S_\mu\left(\psi_\mu(t)\right)=\frac{1+\psi_\mu(t)}{\psi_\mu(t)} t
  =\frac{M_\mu(t)}{M_\mu(t)-1} t.
\end{equation}
The $R$-transform and the $S$-transform are analytic functions in a sufficiently small disk around $0$. It follows from~\eqref{eq:M_G_transform} and~\eqref{eq:R_transform} that $S_\mu(z) R_\mu(zS_\mu(z)) = 1$, locally around $0$. The next proposition collects some known properties of $\mu^{(s)}$.

\begin{proposition}[Free multiplicative normal distribution]
For all $s\in \R$, there exists  a unique probability measure $\mu^{(s)}$ on $(0,\infty)$ (when $s\geq 0$) or on the unit circle (when $s\leq 0$) whose moments are given by~\eqref{eq:mu_s_moments1}, \eqref{eq:mu_s_moments2}, \eqref{eq:mu_s_moments3}. The probability measure $\mu^{(s)}$ is compactly supported.  The $S$- and the $R$-transforms of $\mu^{(s)}$ are given by
$$
S(z;s)=\eee^{-s\left(z + \frac 12\right)},
\qquad
R(w;s)
=
\frac{W_0 \left(-s \eee^{s/2} w\right)}{-sw},
$$
where $W_0$ is the principal branch of the Lambert function solving $W_0(z) \eee^{W_0(z)} = z$.
The free cumulants of $\mu^{(s)}$ are given by
$$
\kappa_k = \frac{(s k)^{ k-1}}{k!} \eee^{sk/2} ,\quad k\in \N.
$$
\end{proposition}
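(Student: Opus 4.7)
The plan is to deduce the proposition from Theorem~\ref{theo:multiplicative_hermite_polys_zeros} by Lagrange inversion combined with a Lambert-$W$ calculation. Theorem~\ref{theo:multiplicative_hermite_polys_zeros} already produces $\mu^{(s)}$ as the weak limit of $\llbracket H_n^*(\cdot;s/n)\rrbracket_n$, identifies its support as a compact subset of $(0,\infty)$ when $s\geq 0$ and as a subset of the unit circle when $s\leq 0$, and gives the moment formulas \eqref{eq:mu_s_moments1}--\eqref{eq:mu_s_moments3}. Since a probability measure supported on a compact subset of $\R$ or on the unit circle is uniquely determined by its moments, existence and uniqueness are already settled; it remains to translate the moment formula into the advertised expressions for $S_\mu$, $R_\mu$ and $\kappa_k$. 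Compactness of support also guarantees that all the transforms in question are analytic in a neighborhood of the origin, so the manipulations below are genuine analytic identities.

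For the $S$-transform I would start from \eqref{eq:S_transform_def}, rewritten as $\chi(u) = \frac{u}{1+u} S_\mu(u)$, where $\chi = \psi_\mu^{-1}$ is the compositional inverse of $\psi_\mu$. Under the ansatz $S(u;s) = \eee^{-s(u+1/2)}$ this reads $\chi(u) = \frac{u \eee^{-s/2}}{1+u}\eee^{-su}$, so that $(u/\chi(u))^k = \eee^{sk/2}(1+u)^k \eee^{sku}$. The Lagrange inversion formula
$$
[t^k]\psi_\mu(t) = \frac{1}{k}[u^{k-1}]\left(\frac{u}{\chi(u)}\right)^k
$$
then reproduces \eqref{eq:mu_s_moments1} on the nose. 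By uniqueness of $\mu^{(s)}$, this confirms the stated formula for $S_\mu$. The equivalent moment formulas \eqref{eq:mu_s_moments2} and \eqref{eq:mu_s_moments3} follow by expanding $(1+u)^k\eee^{sku}$ as a product of Taylor series and recognizing the associated Laguerre polynomial $L_{k-1}^{(1)}(-ks)$.

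For the $R$-transform I would use the standard consequence $S_\mu(z) R_\mu(z S_\mu(z))=1$ of \eqref{eq:R_transform}--\eqref{eq:S_transform_def}. Setting $w = z S_\mu(z) = z\eee^{-s(z+1/2)}$ and multiplying by $-s\eee^{s/2}$ turns this into $(-sz)\eee^{-sz} = -s\eee^{s/2} w$, which is inverted by the principal Lambert branch (the correct choice because $z \to 0$ as $w\to 0$) to give $-sz = W_0(-s\eee^{s/2}w)$. Using the identity $\eee^{W_0(x)} = x/W_0(x)$ then yields
$$
R_\mu(w) = \frac{1}{S_\mu(z)} = \eee^{s(z+1/2)} = \frac{W_0(-s\eee^{s/2}w)}{-sw},
$$
which is the claimed formula. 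Substituting the classical expansion $W_0(y) = \sum_{k\geq 1} \frac{(-k)^{k-1}}{k!}y^k$ with $y = -s\eee^{s/2}w$ and reading off the coefficient of $w^{k-1}$ gives $\kappa_k = (-k)^{k-1}(-s)^{k-1}\eee^{sk/2}/k! = (ks)^{k-1}\eee^{sk/2}/k!$.

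I do not foresee any serious obstacle; the only points requiring care are the selection of the Lambert branch and the verification that the map $z\mapsto zS_\mu(z)$ is locally biholomorphic at the origin. The latter is immediate from $S_\mu(0)=\eee^{-s/2}\neq 0$, and the former is dictated by the analyticity of $R_\mu$ at $0$, so the entire derivation is a short sequence of elementary manipulations once Theorem~\ref{theo:multiplicative_hermite_polys_zeros} is in hand.
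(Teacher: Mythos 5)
Your computations are all correct, and your treatment of the $S$-transform is literally the paper's own argument: the paper's Proposition~\ref{prop:S_transform_hermite} proves, via the same Lagrange inversion applied to $\psi(t)=t\,\phi(\psi(t))$ with $\phi(u)=(1+u)\eee^{s(u+1/2)}$, that the law with $S(z;s)=\eee^{-s(z+\frac12)}$ has the moments \eqref{eq:mu_s_moments1}. Where you go beyond the paper is the $R$-transform and the cumulants: the paper merely cites Bercovici--Voiculescu and Biane for these (it only carries out the analogous Lagrange--B\"urmann computation in the Laguerre case), whereas your derivation via $S_\mu(z)R_\mu(zS_\mu(z))=1$, the substitution $(-sz)\eee^{-sz}=-s\eee^{s/2}w$, the principal branch $W_0$, and the classical series $W_0(y)=\sum_{k\ge1}\frac{(-k)^{k-1}}{k!}y^k$ is correct and self-contained. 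Your branch selection and the local invertibility of $z\mapsto zS_\mu(z)$ are justified exactly as you say.

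The one genuine issue is the existence/uniqueness step. You propose to read off existence and compact support of $\mu^{(s)}$ from Theorem~\ref{theo:multiplicative_hermite_polys_zeros}, but in the paper that theorem's weak-convergence step itself \emph{presupposes} that a compactly supported measure $\mu^{(s)}$ with these moments exists (in the positive case the argument is ``the measure $\mu^{(s)}$ is compactly supported and hence uniquely determined by its moments,'' with existence imported from the literature). So within the paper's logic your derivation is circular. This is fixable without much effort: either cite Bercovici--Voiculescu/Biane for existence, as the paper does, or argue directly that the limiting moment sequence grows at most geometrically (from \eqref{eq:mu_s_moments2} one gets $|m_k|\le C^k$ for some $C=C(s)$), which for a Stieltjes moment sequence of nonnegative measures forces any subsequential weak limit of the (tight) empirical measures to be compactly supported and moment-determinate, yielding existence and uniqueness simultaneously. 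You should make one of these two choices explicit rather than leaning on Theorem~\ref{theo:multiplicative_hermite_polys_zeros} as a black box.
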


\subsection{Multiplicative heat flow and the asymptotic distribution of zeros}
Recall that the operator $\eee^{-\frac{t}{2}\partial_x^2}$ preserves the class of real-rooted polynomials when $t\geq 0$ and the class of polynomials with zeros on $\ii   \R$ when $t\leq 0$. The next proposition is a multiplicative version of this result.
\begin{proposition}[Multiplicative heat flow preserves positive/unitary zeros]\label{prop:mult_heat_acts_in_zeros}
Let $P_n(x)\in \C[x]$ be a polynomial of degree $n\in \N$.
\begin{enumerate}
\item[(a)]  The positive case: If  all roots of $P_n$ are in $[0,\infty)$, then for every $s\geq 0$ all roots of the polynomials $\eee^{- \frac s{2} ((x \partial_x)^2- n x \partial_x) } P_n$ are also in $[0,\infty)$.
\item[(b)] The unitary case: If  all roots of $P_n$ are located  on the unit circle $\{|z|=1\}$, then for every $s\leq 0$ all roots of the polynomials $\eee^{- \frac s{2} ((x \partial_x)^2- n x \partial_x) } P_n$ are located on the unit circle.
\end{enumerate}
\end{proposition}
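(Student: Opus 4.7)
The plan is to identify the operator $\eee^{-\frac{s}{2}((x\partial_x)^2-n x\partial_x)}$ with a finite free multiplicative convolution and then to invoke the preservation properties of that convolution together with Proposition~\ref{prop:mult_hermite_zeros_positive_unitary} on the roots of $H_n^*(\cdot;s)$.

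Since $x\partial_x$ acts on $x^j$ as multiplication by $j$, the operator multiplies the coefficient of $x^j$ in any polynomial of degree $\le n$ by the scalar $\eee^{-\frac{s}{2} j(j-n)}$. Comparing with \eqref{eq:hermite_multipl_def}, the coefficient of $x^j$ in $H_n^*(\cdot;s)$ is exactly $(-1)^{n-j}\binom{n}{j}\eee^{-\frac{s}{2} j(j-n)}$. Recalling the coefficient formula for the finite free multiplicative convolution $\boxtimes_n$ of two monic polynomials of the same degree (see \cite{marcus2021polynomial,marcus_spielman_srivastava}), one verifies the identity
\[
\eee^{-\frac{s}{2}((x\partial_x)^2 - n x\partial_x)} P_n(x) \;=\; H_n^*(\cdot;s) \,\boxtimes_n\, P_n(x)
\]
for every monic $P_n$ of degree $n$. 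The non-monic case reduces to this one by factoring out the leading coefficient, which the operator preserves and which does not affect the zero set.

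Given this identification, part~(a) reduces to Szeg\H{o}'s classical composition theorem in its finite-free reformulation: $\boxtimes_n$ preserves the class of monic polynomials of degree $n$ with all roots in $[0,\infty)$. Combined with Proposition~\ref{prop:mult_hermite_zeros_positive_unitary}(a), which gives that $H_n^*(\cdot;s)$ has only nonnegative roots for $s\ge 0$, this yields~(a). Part~(b) follows analogously from Proposition~\ref{prop:mult_hermite_zeros_positive_unitary}(b) together with the unit-circle analogue of Szeg\H{o}'s theorem, which asserts that $\boxtimes_n$ preserves the class of monic polynomials of degree $n$ with all roots on the unit circle.

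I expect the main obstacle to be the unit-circle preservation statement for $\boxtimes_n$, which is less prominently documented in the literature than its positive-real counterpart. If no direct reference is available, I would argue as follows: the operator manifestly preserves the conjugate-reciprocal symmetry $x^n \overline{P_n(1/\bar x)} = \epsilon\, P_n(x)$, as the multiplier $\eee^{-\frac{s}{2} j(j-n)}$ is real (for real $s$) and invariant under $j\mapsto n-j$, so the zeros of $\eee^{-\frac{s}{2}(\cdots)}P_n$ organize into pairs $\{\lambda, 1/\bar\lambda\}$. A Grace--Walsh--Szeg\H{o} argument on the closed unit disk, applied after a small perturbation $P_n^{(\varepsilon)}$ with roots strictly inside the closed unit disk and a passage to the limit $\varepsilon \to 0$, then excludes any root off the unit circle and establishes case~(b).
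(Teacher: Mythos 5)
Your proposal is correct and follows essentially the same route as the paper: identify $\eee^{-\frac{s}{2}((x\partial_x)^2-nx\partial_x)}P_n$ with the finite free multiplicative convolution of $P_n$ and a multiplicative Hermite polynomial, then invoke Proposition~\ref{prop:mult_hermite_zeros_positive_unitary} together with Szeg\H{o}'s preservation result (Proposition~\ref{prop:finite_free_preserces_positive_zeros_or_unit_circle}). The two detours you take are unnecessary: the definition~\eqref{eq:mult_finite_free_conv_def} of $\boxtimes_n$ is purely coefficientwise, so no reduction to monic polynomials is needed, and the unit-circle case of Szeg\H{o}'s theorem is already stated in the paper with a reference to \cite[Satz 3]{szegoe_bemerkungen_grace}, so your sketched Grace--Walsh--Szeg\H{o} fallback can be dropped.
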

For the proof we need the multiplicative finite free convolution~\cite{marcus2021polynomial,marcus_spielman_srivastava} defined, for two polynomials of degree at most $n$, by
\begin{align}\label{eq:mult_finite_free_conv_def}
\sum_{j=0}^n a_{j;n}^{(1)} x^j \boxtimes_n \sum_{j=0}^n a_{j;n}^{(2)} x^j &=  \sum_{j=0}^n (-1)^{n-j} \frac{a_{j;n}^{(1)} a_{j;n}^{(2)}}{\binom n j} x^j.
\end{align}
The next result is classical, see~\cite[Satz 4']{szegoe_bemerkungen_grace} (or~\cite[Theorem~1.6]{marcus_spielman_srivastava}) for the first claim and~\cite[Satz 3]{szegoe_bemerkungen_grace} for the second one.
\begin{proposition}\label{prop:finite_free_preserces_positive_zeros_or_unit_circle}
If $P_n(x)$ and $Q_n(x)$ are two polynomials of degree $n$ which both have all their roots in $[0,\infty)$ (respectively, on the unit circle), then $P_n\boxtimes_n Q_n$ has all of its zeros in $[0,\infty)$ (respectively, on the unit circle).
\end{proposition}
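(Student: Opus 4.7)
The proposition is classical, going back to Szeg\H{o}~\cite{szegoe_bemerkungen_grace}. I outline a proof via the Grace--Walsh--Szeg\H{o} (GWS) coincidence theorem. Write $P(x) = \prod_{i=1}^n(x - r_i)$ and $Q(x) = \prod_{j=1}^n(x - s_j)$, fix $y \in \C$, and consider the evaluation
\begin{equation*}
F(r_1, \ldots, r_n; s_1, \ldots, s_n) := (P \boxtimes_n Q)(y).
\end{equation*}
From the definition \eqref{eq:mult_finite_free_conv_def}, the coefficients of $P$ (respectively $Q$) are, up to sign, the elementary symmetric polynomials in the $r_i$ (respectively $s_j$), so $F$ is a symmetric multi-affine polynomial in each of the two families of variables $(r_i)_{i=1}^n$ and $(s_j)_{j=1}^n$ separately.

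The coalescing step uses GWS: if $\tilde F(x_1, \ldots, x_n)$ is symmetric and multi-affine in its $n$ arguments and $K$ is a circular region (closed disk, closed half-plane, or closed exterior of an open disk), then for any $x_1, \ldots, x_n \in K$ there exists $\zeta \in K$ with $\tilde F(\zeta, \ldots, \zeta) = \tilde F(x_1, \ldots, x_n)$. Applied to $F$ in its $r$-arguments, with $r_i \in K_1$, this yields some $\rho \in K_1$ such that $F(\rho, \ldots, \rho; s) = 0$ whenever $F(r; s) = 0$. A direct calculation from \eqref{eq:mult_finite_free_conv_def} gives the identity $(x - \rho)^n \boxtimes_n Q(x) = \rho^n Q(x/\rho)$ for $\rho \neq 0$ (and $x^n \boxtimes_n Q(x) = x^n$ when $\rho = 0$), so $F(\rho, \ldots, \rho; s)$ vanishes iff $y = \rho s_j$ for some zero $s_j$ of $Q$ (or $y = 0$ in the case $\rho = 0$). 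Conclusion: for every zero $y$ of $P \boxtimes_n Q$ and every circular region $K_1$ containing all zeros of $P$, there exist $\rho \in K_1$ and a zero $s_j$ of $Q$ with $y = \rho s_j$.

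It remains to choose circular regions $K_1$ whose pertinent intersection captures the target set. For the unitary case, $K_1 = \{|z| \leq 1\}$ gives $|y| = |\rho|\,|s_j| \leq 1$ (using $|s_j| = 1$), while $K_1 = \{|z| \geq 1\}$ gives $|y| \geq 1$; together $|y| = 1$. For the positive case, $[0,\infty)$ is not itself a circular region, and three applications are needed: $K_1 = \{\Im z \geq 0\}$ forces $\Im y = s_j\, \Im \rho \geq 0$ (or $y = 0$ when $s_j = 0$); $K_1 = \{\Im z \leq 0\}$ similarly forces $\Im y \leq 0$; so $y \in \R$. A final application with $K_1 = \{\Re z \geq 0\}$ then forces $\Re y \geq 0$, yielding $y \in [0,\infty)$.

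The main obstacle is precisely that neither target set -- the positive half-line nor the unit circle -- is a circular region in the GWS sense, so a single application of the coincidence theorem is insufficient. The remedy is to express each target as an intersection of (two or three) circular regions, each containing all zeros of $P$, and to intersect the constraints on the zeros of $P \boxtimes_n Q$ produced by the coalescing principle applied in each such region.
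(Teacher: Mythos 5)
Your argument is correct in its main thrust and is essentially the classical Grace--Walsh--Szeg\H{o} route that underlies the results the paper cites (\cite[Satz 3, Satz 4']{szegoe_bemerkungen_grace}; the paper itself gives no proof beyond the citation). The setup is right: $F$ is symmetric multi-affine in the $r$-family, the coalescing identity $(x-\rho)^n\boxtimes_n Q(x)=\rho^n Q(x/\rho)$ is correct, and the intersection of two (unitary case) or three (positive case) circular regions does identify the target set.

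There is, however, one gap you should close. The GWS coincidence theorem, in the sharp form you invoke, has a hypothesis you silently skip: if $\tilde F$ is symmetric and multi-affine of $n$ variables and $K$ is a circular region with $\zeta_1,\dots,\zeta_n\in K$, the conclusion $\tilde F(\zeta,\dots,\zeta)=\tilde F(\zeta_1,\dots,\zeta_n)$ for some $\zeta\in K$ is guaranteed only if \emph{either} $K$ is convex \emph{or} $\deg \tilde F = n$. In the positive case you use only closed half-planes, which are convex, so no problem. But in the unitary case you also use $K_1=\{|z|\ge 1\}$, which is \emph{not} convex, so you must check that $F$, viewed as a polynomial in $(r_1,\dots,r_n)$, has total degree exactly $n$. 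Writing $F(r;s)=\sum_{j=0}^n (-1)^{n-j}\,e_{n-j}(r)\,e_{n-j}(s)\,y^j/\binom nj$, the coefficient of $e_n(r)$ (the unique degree-$n$ term) is $(-1)^n e_n(s)=(-1)^n\prod_j s_j$, which is nonzero precisely because all $s_j$ lie on the unit circle. So the hypothesis does hold, but the verification needs to be spelled out; without it, the step fails in principle (e.g.\ if one of the $s_j$ were $0$, $\deg_r F<n$ and the non-convex region could not be used). A related minor point, worth a sentence: in the unitary case one should note $y\neq 0$ (the constant term of $P\boxtimes_n Q$ is $\pm\,e_n(r)\,e_n(s)\neq 0$), which rules out the $\rho=0$ branch of your coalescing identity when applying GWS with $K_1=\{|z|\le 1\}$.
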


\begin{proof}[Proof of Proposition~\ref{prop:mult_heat_acts_in_zeros}]
Write $P_n(x) = \sum_{j=0}^n a_{j;n} x^j$ and recall from~\eqref{eq:hermite_multipl_def}  that $H_n^*(x;s/n) = \sum_{j=0}^n (-1)^{n-j} \binom nj \eee^{-\frac s{2n} (j^2 - nj)} x^j$. Using~\eqref{eq:mult_finite_free_conv_def} we can write
\begin{equation}\label{eq:mult_heat_flow_as_finite_conv}
\eee^{- \frac s{2n} ((x \partial_x)^2- n x \partial_x)} P_n(x)
=
\sum_{j=0}^n a_{j;n} \eee^{- \frac s{2n} j^2 + \frac s2 j} x^j
=
P_n(x) \boxtimes_n H_n^*(x;s/n).
\end{equation}
It remains to apply Proposition~\ref{prop:mult_hermite_zeros_positive_unitary} in combination with Proposition~\ref{prop:finite_free_preserces_positive_zeros_or_unit_circle}.
\end{proof}

In the next theorem, we describe how the operator $\eee^{- \frac s{2n} ((x \partial_x)^2- n x \partial_x)}$ affects the asymptotic zero distribution of a degree $n$ polynomial, as $n\to\infty$. It is a multiplicative version of Theorem~\ref{theo:zeros_hermite_classical_asymptotic_distr}. Note the $1/n$-scaling in the time variable.

\begin{theorem}[Multiplicative heat flow and the asymptotic distribution of zeros]
Let $(P_n(z))_{n\in \N}$ be a sequence of polynomials  such that $\deg P_n = n$ for all $n\in\N$.
\begin{enumerate}
\item[(a)] The positive case: If each $P_n$ has only zeros in $[0,\infty)$ and $\llbracket  P_n \rrbracket_n$ converges  to some probability measure  $\rho$ weakly on $[0,\infty)$, then for every $s\geq 0$,  $\llbracket \eee^{- \frac s{2n} ((x \partial_x)^2- n x \partial_x)} P_n(x)\rrbracket_n$ converges  to $\rho \boxtimes \mu^{(s)}$ weakly on $[0,\infty)$. Here, $\boxtimes$ denotes the free multiplicative convolution of probability measures on $[0,\infty)$.
\item[(b)] The unitary case: If each $P_n$ has only zeros on the unit circle  $\{|z|=1\}$ and $\llbracket  P_n \rrbracket_n$ converges weakly to some probability measure  $\rho$ on $\{|z| = 1\}$, then for every $s\leq 0$,  $\llbracket  \eee^{- \frac s{2n} ((x \partial_x)^2- n x \partial_x) } P_n(x) \rrbracket_n$ converges weakly to $\rho \boxtimes \mu^{(s)}$. Here, $\boxtimes$ denotes the free multiplicative convolution of probability measures on the unit circle.
\end{enumerate}
\end{theorem}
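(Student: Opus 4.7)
The starting point is identity~\eqref{eq:mult_heat_flow_as_finite_conv} from the proof of Proposition~\ref{prop:mult_heat_acts_in_zeros}, namely
$$
\eee^{-\frac{s}{2n}((x\partial_x)^2 - n x\partial_x)} P_n(x) = P_n(x) \boxtimes_n H_n^*(x; s/n),
$$
which realizes the multiplicative heat operator (at time $s/n$) as finite free multiplicative convolution with the multiplicative Hermite polynomial. Combined with Theorem~\ref{theo:multiplicative_hermite_polys_zeros}, which yields $\llbracket H_n^*(\cdot;s/n)\rrbracket_n \to \mu^{(s)}$ weakly, the theorem reduces to a joint-continuity statement for $\boxtimes_n$: whenever $\llbracket P_n\rrbracket_n \to \rho$ and $\llbracket Q_n\rrbracket_n \to \nu$ weakly (with zeros in the appropriate locus), one should have $\llbracket P_n \boxtimes_n Q_n\rrbracket_n \to \rho \boxtimes \nu$ weakly. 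This single reduction handles the positive and unitary cases uniformly.

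The support/tightness step is then straightforward. In case (a), Proposition~\ref{prop:mult_heat_acts_in_zeros}(a) keeps the zeros of $P_n \boxtimes_n H_n^*(\cdot;s/n)$ in $[0,\infty)$, and since $\mu^{(s)}$ has compact support in $(0,\infty)$ and $\rho$ is (implicitly) compactly supported, a coefficient-level estimate from~\eqref{eq:mult_finite_free_conv_def} gives a uniform upper bound on the largest zero, yielding tightness. In case (b), Proposition~\ref{prop:mult_heat_acts_in_zeros}(b) confines all zeros to the compact unit circle, so tightness is automatic and $\rho \boxtimes \nu$ is the natural multiplicative convolution on the circle.

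The main obstacle is the continuity of finite free multiplicative convolution itself, and I would prove it via the $S$-transform. Using the explicit formula~\eqref{eq:mult_finite_free_conv_def}, the moments (equivalently, the $\psi$-transform) of $\llbracket P_n \boxtimes_n Q_n\rrbracket_n$ can be written in terms of those of $\llbracket P_n\rrbracket_n$ and $\llbracket Q_n\rrbracket_n$ with an $O(1/n)$ correction coming from the factor $1/\binom{n}{j}$; passing to the limit gives the defining multiplicative relation $S_{\rho\boxtimes \nu}(z) = S_\rho(z) S_\nu(z)$ pointwise in a fixed neighbourhood of the origin. Together with the uniform support bound this produces convergence of all moments, and hence weak convergence of the zero distributions. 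In the positive-roots setting this continuity is already a standard result of finite free probability (see the discussion in~\cite{marcus2021polynomial}); in the unitary setting the same $S$-transform argument applies, the $\psi$- and $S$-transforms of measures supported on the unit circle being analytic in a neighbourhood of $0$.
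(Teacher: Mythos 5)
Your proposal is correct and follows essentially the same route as the paper: rewrite the heat-flow action as $P_n \boxtimes_n H_n^*(\cdot;s/n)$ via~\eqref{eq:mult_heat_flow_as_finite_conv}, invoke Theorem~\ref{theo:multiplicative_hermite_polys_zeros} for the Hermite factor, and conclude by continuity of $\boxtimes_n$ under weak convergence of empirical root measures (which the paper simply cites from Arizmendi--Garza-Vargas--Perales and Jalowy--Kabluchko--Marynych in the positive and unitary settings, respectively). The only difference is that you sketch a proof of that continuity lemma via $S$-transforms rather than citing it; your sketch is plausible but loose (the ``$O(1/n)$ correction'' at the level of moments is really a statement about finite free cumulants, and the compact support of $\rho$ you invoke for tightness is not among the theorem's hypotheses), so in a final write-up you should either cite the known result as the paper does or work out that lemma carefully.
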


\begin{proof}
Recall from~\eqref{eq:mult_heat_flow_as_finite_conv} that $\eee^{- \frac s{2n} ((x \partial_x)^2- n x \partial_x)} P_n(x) =P_n(x) \boxtimes_n H_n^*(x;s/n)$.

\smallskip
\noindent
\emph{Positive case:} Let $s\geq 0$. We know that $\llbracket  P_n \rrbracket_n \to \rho$ and $\llbracket  H_n^*(x; s/n)\rrbracket_n \to \mu^{(s)}$ weakly on $[0,\infty)$. It is known, see~\cite[Theorem~1.4]{arizmendi_garza_vargas_perales} or~\cite[Theorem~3.7]{jalowy_kabluchko_marynych_zeros_profiles_part_I}, that this implies  $\llbracket P_n(x) \boxtimes_n H_n^*(x;s/n)\rrbracket_n\to \rho \boxtimes \mu^{(s)}$ weakly on $[0,\infty)$.

\smallskip
\noindent
\emph{Unitary case:} Let $s\leq 0$. We know that $\llbracket  P_n \rrbracket_n \to \rho$ and $\llbracket  H_n^*(x; s/n)\rrbracket_n \to \mu^{(s)}$ weakly on $\{|z| = 1\}$.
By~\cite[Proposition~3.4]{arizmendi_garza_vargas_perales} (see also~\cite[Proposition~2.9]{kabluchko2024repeateddifferentiationfreeunitary} for the version needed here), this implies $\llbracket P_n(x) \boxtimes_n H_n^*(x;s/n)\rrbracket_n\to \rho \boxtimes \mu^{(s)}$ weakly on the unit circle.
\end{proof}

\section{Main results on multiplicative Laguerre polynomials}
\subsection{Definition of the multiplicative Laguerre polynomials}
The (associated) Laguerre polynomials with parameter $\alpha\in \R$ are defined by
$$
L_n^{(\alpha)}(x) = \sum_{j=0}^{n}\binom{n+\alpha}{n-j}\frac{(-x)^{j}}{j!}, \qquad n\in \N_0.
$$

Let $(\gamma_n)_{n\in\N}\subseteq \R$ be a sequence such that $\lim_{n\to\infty} \gamma_n /n = \gamma \geq -1$ and $\gamma_n \in [0,\infty) \cup\{-1,\ldots, -n+1\}$ for all $n\in \N$. It is well known, see, e.g., \cite[Section~4.7]{jalowy_kabluchko_marynych_zeros_profiles_part_II}, that the zero distribution $\llbracket L_n^{(\gamma_n)} (n \cdot  )\rrbracket_n$ converges to the Marchenko--Pastur law (the free Poisson distribution) with appropriate parameters.

Our aim is to prove a multiplicative version of this result.
\begin{definition}
For parameters $c\in \N_0$ and $b\in \C$, the \emph{multiplicative Laguerre polynomials} $L_n^*(x; b,c)$ are defined by
\begin{equation}\label{eq:laguerre_mult_def}
L_n^*(x; b,c) := (x\partial_x + b)^c (x-1)^n =  \sum_{j=0}^n (-1)^{n-j} \binom nj (j+b)^c x^j, \qquad n\in \N_0.
\end{equation}
\end{definition}
To motivate this definition, let us mention that these polynomials appear as expected characteristic polynomials of random matrix \emph{products} $A_{1;n}\cdot \ldots \cdot A_{c;n}$, where each $A_{j;n} = \mathrm{Id}_n  - \frac{n}{b+n} U_{j;n} U_{j;n}^\top$ is a rank-one perturbation of the $n\times n$-identity matrix $\mathrm{Id}_n$, and $U_{1;n},\ldots, U_{c;n}$ are independent random vectors uniformly distributed on the unit sphere in $\R^n$; see~\cite[Section~2.4]{kabluchko2024repeateddifferentiationfreeunitary} and~\cite[Remark~5.13]{jalowy_kabluchko_marynych_zeros_profiles_part_II} for exact statements. Similarly, the associated Laguerre polynomials are expected characteristic polynomials of $\sum_{j=1}^c U_{j;n} U_{j;n}^\top $, i.e.\ \emph{sums} of random rank-one matrices; see~\cite[Section~6.3]{marcus2021polynomial}.
Note also that~\eqref{eq:laguerre_mult_def} is the multiplicative analogue (with $\partial_x$ replaced by $x\partial_x$) of the following property of the associated Laguerre polynomials:
\[
\begin{aligned}
&\text{if } n\ge m:\quad (\partial_x+b)^m x^n  =  m! x^{ n-m} L_m^{(n-m)}(-b x),\\
&\text{if } m\ge n:\quad (\partial_x+b)^m x^n  =  n! b^{ m-n} L_n^{(m-n)}(-b x).
\end{aligned}
\]
The multiplicative Laguerre polynomials are closely related to the Sidi polynomials appearing in~\cite[Theorems~4.2, 4.3]{sidi_numerical_quad_nonlinear_seq}, \cite[Theorem~3.1]{sidi_num_quad_infinite_range}; see also see~\cite[Theorem~1.3]{lubinski_sidi_strong_asympt}, \cite[Theorem~2]{lubinski_stahl_some_explicit} and~\cite{lubinski_sidi_composite_biorthogonal} for their asymptotic zero distribution.

\subsection{Zeros of the multiplicative Laguerre polynomials}
The following result outlines the conditions on the parameters $b$ and $c$ that ensure the polynomial $L_n^*(\cdot; b,c)$  has only positive/unitary zeros.
\begin{proposition}[Positive and unitary zeros]\label{prop:mult_laguerre_positive_unitary_zeros}
Let $n\in \N$.
\begin{itemize}
\item[(a)]
The positive case: For $c\in \N_0$ and $b\in \R \backslash [-n,0]$,  all zeros of $L_n^*(\cdot; b,c)$ are real and positive.
\item[(b)]
The unitary case: For $c\in \N_0$ and $b\in \C$ with $\Re b = -n/2$, all zeros of $L_n^*(\cdot; b,c)$ are located on the unit circle $\{|z| =1\}$.
\end{itemize}
\end{proposition}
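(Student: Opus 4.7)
The plan mirrors the proof of Proposition~\ref{prop:mult_heat_acts_in_zeros}: reduce to the case $c=1$ by realizing the operator $(x\partial_x + b)$ as multiplicative finite free convolution with the single polynomial $L_n^*(x;b,1)$, and then invoke the Grace--Szegő preservation result, Proposition~\ref{prop:finite_free_preserces_positive_zeros_or_unit_circle}, to handle general $c$.

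First, I would observe directly from the coefficient formula~\eqref{eq:mult_finite_free_conv_def} that, for every polynomial $P(x)=\sum_{j=0}^{n} a_{j;n} x^{j}$ of degree at most $n$,
\[
(x\partial_x + b)\,P(x) \;=\; \sum_{j=0}^n (j+b)\,a_{j;n}\,x^j \;=\; P(x)\boxtimes_n L_n^*(x;b,1),
\]
since~\eqref{eq:laguerre_mult_def} yields $L_n^*(x;b,1)=\sum_{j=0}^{n}(-1)^{n-j}\binom{n}{j}(j+b)x^{j}$, which is precisely the polynomial whose $\boxtimes_n$-action rescales the $j$-th coefficient by $(j+b)$. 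Iterating $c$ times and using that $(x-1)^n$ is the $\boxtimes_n$-identity (again immediate from~\eqref{eq:mult_finite_free_conv_def}) gives the key decomposition
\[
L_n^*(x;b,c) \;=\; L_n^*(x;b,1)^{\boxtimes_n c}.
\]

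A one-line computation via the product rule factors the base polynomial:
\[
L_n^*(x;b,1) \;=\; (x\partial_x + b)(x-1)^n \;=\; (x-1)^{n-1}\,\bigl((n+b)\,x - b\bigr),
\]
so its zeros are $x=1$ with multiplicity $n-1$, together with the extra root $x = b/(n+b)$ when $n+b\ne 0$. For part~(a), the condition $b\in\R\setminus[-n,0]$ makes $b$ and $n+b$ nonzero reals of the same sign, hence $b/(n+b)>0$. For part~(b), the condition $\Re b=-n/2$ gives $|b|^2 = n^2/4 + (\Im b)^2 = |n+b|^2$, hence $|b/(n+b)|=1$. In both cases $L_n^*(x;b,1)$ is a genuine polynomial of degree $n$ (since $n+b\ne 0$) with all zeros in $[0,\infty)$, respectively on the unit circle.

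Proposition~\ref{prop:finite_free_preserces_positive_zeros_or_unit_circle} then transfers the property through the $c$-fold finite free multiplicative convolution, yielding the claim for $L_n^*(x;b,c)$. I do not foresee any genuine obstacle; the argument is a direct parallel of the Hermite case in Proposition~\ref{prop:mult_heat_acts_in_zeros}, with the small extra bookkeeping point that one must verify $n+b\ne 0$ so that $L_n^*(x;b,1)$ truly has degree $n$ and the iterated convolution stays non-degenerate. This is built into the hypotheses: $b\notin[-n,0]$ excludes $b=-n$ in part~(a), while $\Re b=-n/2$ forces $\Re(n+b)=n/2>0$ for $n\ge 1$ in part~(b).
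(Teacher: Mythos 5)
Your proposal is correct and follows essentially the same route as the paper: factor $L_n^*(x;b,1)=(x-1)^{n-1}((n+b)x-b)$, check that the extra root $b/(n+b)$ is positive (resp.\ unimodular) under the stated hypotheses, and then bootstrap to general $c$ via the $c$-fold finite free multiplicative convolution together with Proposition~\ref{prop:finite_free_preserces_positive_zeros_or_unit_circle}. The only cosmetic difference is that you compute the $c=1$ case by the product rule rather than by the binomial identity $j\binom{n}{j}=n\binom{n-1}{j-1}$, and you make explicit the (correct) degree check $n+b\neq 0$ that the paper leaves implicit.
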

\begin{proof}
For $c=0$ we have $L_n^*(x; b,0)= (x-1)^n$ and both claims are true. Let now $c=1$. Using the binomial theorem and the identity $j\binom{n}{j}=n\binom{n-1}{j-1}$, we have
\[
\begin{aligned}
L_n^*(x; b,1)
&
=b\sum_{j=0}^n (-1)^{ n-j}\binom{n}{j}x^j
  +\sum_{j=0}^n (-1)^{ n-j}\binom{n}{j}j x^j
=b(x-1)^n
  +\sum_{j=1}^n (-1)^{ n-j} n\binom{n-1}{j-1}x^j
\\
&=b(x-1)^n + n x (x-1)^{n-1}
=(x-1)^{n-1}\left((b+n)x-b\right).
\end{aligned}
\]
Thus, for $b\in \C \backslash\{-n\}$, the zeros of $L_n^*(x; b,1)$ are $x=1$ (with multiplicity $n-1$) and $x=\frac{b}{b+n}$ (with multiplicity $1$).
To complete the proof for $c=1$, observe (a): if $b\in \R \backslash [-n,0]$, then $\frac{b}{b+n}>0$, and (b): if $b \in  -\frac n2 + \ii   \R$, then $\frac{b}{b+n}$ belongs to the unit circle.

To prove the proposition for general $c\in \N$, write $L_n^*(x; b,c) =  L_n^*(x; b,1) \boxtimes_n \ldots \boxtimes_n L_n^*(x; b,1)$ (with $c$ factors on the right-hand side) and apply Proposition~\ref{prop:finite_free_preserces_positive_zeros_or_unit_circle}.
\end{proof}

\begin{theorem}[Asymptotic zero distribution of multiplicative Laguerre polynomials]\label{theo:multiplicative_laguerre_polys_zeros}
Let $(b_n)_{n\in \N}\subseteq \C$ and $(c_n)_{n\in \N}\subseteq \N$ be sequences such that $b_n/n\to \beta$ and $c_n/n \to \gamma$ as $n\to\infty$, where $\beta\in \C$ and $\gamma>0$ are constants.
\begin{itemize}
\item[(a)] The positive case: If $b_n\in \R\backslash[-n,0]$ for every $n\in \N$ and  $\beta \in \R \backslash [-1,0]$, then  the probability measure $\llbracket  L_n^*(\cdot; b_n, c_n)\rrbracket_n$ converges weakly to some probability measure $\nu_{\beta, \gamma}$ on $[0,\infty)$.
\item[(b)] The unitary case: If $\Re b_n = -n/2$ for every $n\in \N$ (and hence $\Re \beta = -1/2$), then the probability measures $\llbracket  L_n^*(\cdot; b_n, c_n)\rrbracket_n$ converge weakly to some probability measure $\nu_{\beta, \gamma}$ on the unit circle. 
\item[(c)]
In both cases, the $S$-transform of $\nu_{\beta, \gamma}$ is given by
\begin{equation}\label{eq:S_transform_mult_poisson}
S(y)=\exp\left(\frac{\gamma}{\beta+1+y}\right).
\end{equation}
\end{itemize}
\end{theorem}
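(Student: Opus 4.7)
The plan is to adapt the unified method introduced in Section~\ref{sec:proof_hermite} for the Hermite case: use multiplicative finite free convolution to decompose $L_n^*(x; b_n, c_n)$ into simple building blocks whose zeros are explicit, then track how the corresponding $S$-transforms accumulate as $n\to\infty$ with $c_n/n \to \gamma$. Concretely, $(x\partial_x + b)$ acts on $x^j$ by multiplication by $(j+b)$, so by~\eqref{eq:mult_finite_free_conv_def} its action on any polynomial of degree at most $n$ coincides with the finite free convolution $\boxtimes_n L_n^*(\cdot; b, 1)$; since $(x-1)^n$ is the identity element for $\boxtimes_n$, iteration yields
$$L_n^*(x; b_n, c_n) \;=\; L_n^*(x; b_n, 1)^{\boxtimes_n c_n}.$$
From the factorisation in the proof of Proposition~\ref{prop:mult_laguerre_positive_unitary_zeros}, the building block has the explicit empirical zero distribution $\tfrac{n-1}{n}\delta_1 + \tfrac{1}{n}\delta_{b_n/(b_n+n)}$, whose exceptional root tends to $\beta/(\beta+1) \in (0,\infty)$ in case~(a) and to a point on the unit circle in case~(b).

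Next, I would expand the $S$-transform of this one-point-perturbed measure to first order in $1/n$. A short calculation from $m_k = 1 + \tfrac{1}{n}((b_n/(b_n+n))^k - 1)$ and the defining relation~\eqref{eq:S_transform_def} gives
$$\log S_{\llbracket L_n^*(\cdot;\, b_n,1)\rrbracket_n}(y) \;=\; \frac{1}{n}\cdot\frac{1}{\beta+1+y} + O(1/n^2).$$
If the $S$-transform were exactly multiplicative under $\boxtimes_n$, the $c_n$-fold accumulation would give $\log S_{\nu_{\beta,\gamma}}(y) = \gamma/(\beta+1+y)$, matching~\eqref{eq:S_transform_mult_poisson}.

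The principal obstacle is that $\boxtimes_n$ is only approximately multiplicative on $S$-transforms, and with $c_n \sim \gamma n$ factors the standard finite-to-infinite comparison results (such as~\cite[Theorem~1.4]{arizmendi_garza_vargas_perales}) do not apply directly, since they handle a \emph{fixed} number of factors as $n\to\infty$. To bypass this I would work with the explicit binomial expansion~\eqref{eq:laguerre_mult_def} and apply the exponential-profile / saddle-point machinery of~\cite{jalowy_kabluchko_marynych_zeros_profiles_part_I, jalowy_kabluchko_marynych_zeros_profiles_part_II, kabluchko2024leeyangzeroescurieweissferromagnet}: the $n$-th root of the coefficient magnitudes converges to $e^{H(t)}(t+\beta)^{\gamma}$ with $H(t) = -t\log t - (1-t)\log(1-t)$ the binary entropy and $t = j/n \in [0,1]$, and a Legendre-type transform of this profile yields the $S$-transform of the limit zero distribution. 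I expect the unified treatment of Section~\ref{sec:proof_hermite} to apply to this profile essentially verbatim, with the Hermite parabola $\tfrac{s}{2}\,t(1-t)$ simply replaced by $\gamma\log(t+\beta)$; this is the sense in which the Hermite and Laguerre cases are handled in a unified way.

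Weak convergence is then extracted from convergence of analytic (respectively Fourier) moments combined with the \emph{a priori} zero localisation of Proposition~\ref{prop:mult_laguerre_positive_unitary_zeros}: in case~(a) all zeros lie in $(0,\infty)$ and are confined to a fixed compact subinterval (which requires a separate estimate on the extreme roots, or equivalently compactness of the support of $\nu_{\beta,\gamma}$ derived from the $S$-transform), while in case~(b) all zeros lie on the unit circle. Compactness of the support turns moment convergence into weak convergence and uniquely identifies the limit as the probability measure $\nu_{\beta, \gamma}$ characterised by the $S$-transform~\eqref{eq:S_transform_mult_poisson}.
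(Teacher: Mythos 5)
Your reduction $L_n^*(\cdot;b_n,c_n)=L_n^*(\cdot;b_n,1)^{\boxtimes_n c_n}$, the explicit zero distribution $\tfrac{n-1}{n}\delta_1+\tfrac1n\delta_{b_n/(b_n+n)}$ of the building block, and the first-order expansion $\log S(y)=\tfrac{1}{n(\beta+1+y)}+O(n^{-2})$ are all correct, and you correctly name the real obstacle: with $c_n\sim\gamma n$ factors, approximate multiplicativity of the $S$-transform under $\boxtimes_n$ cannot be iterated using the fixed-number-of-factors comparison results. The gap lies in what you propose to do about it. The paper's resolution is not the profile/saddle-point machinery you invoke: it works with $\Sigma_n(x;s)=\frac{x}{n}\frac{\partial_x P_n}{P_n}$, derives the exact one-step recursion $\Sigma_n(x;s+1)-\Sigma_n(x;s)=\frac{x\partial_x\Sigma_n}{b+n\Sigma_n}$, expands $1/(b+n\Sigma_n)$ in a geometric series to obtain a triangular system of difference equations for the moments $\sigma_{k;n}(s)$ involving ordinary Bell polynomials, and then shows by induction together with a Riemann-sum/dominated-convergence argument that after $c_n$ steps these converge to the solution of a limiting triangular ODE system, which Lagrange inversion identifies with the moments of the measure with $S$-transform $\exp(\gamma/(\beta+1+y))$. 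Your assertion that ``the unified treatment of Section~\ref{sec:proof_hermite} applies essentially verbatim with the Hermite parabola replaced by $\gamma\log(t+\beta)$'' conflates that method with the exponential-profile method (Section~\ref{sec:proof_hermite} uses a Burgers-type PDE for $\Sigma_n$, not coefficient profiles), and it underestimates the structural differences: the Laguerre evolution is discrete in time and its nonlinearity is rational rather than quadratic, which is precisely why the Bell-polynomial expansion and the discrete-to-continuum limit are needed and why the Laguerre argument is not a verbatim transfer of the Hermite one.

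Moreover, the fallback you actually cite does not cover the full statement. The exponential-profile results apply to polynomials with nonnegative roots and therefore cannot handle the unitary case; the saddle-point result you cite for the circle is proved only for $b_n=-n/2$, not for general $b_n$ with $\Re b_n=-n/2$; and for the positive case the citation is to the prior theorem that already established part (a), so nothing new is proved there. As written, the proposal leaves part (b) with $\Im b_n\neq 0$ without a proof. To close the gap you would need to supply quantitative control of the $\sim\gamma n$-step iteration uniformly in the step index --- for instance via the paper's moment difference-equation scheme --- rather than appealing to a single-step $S$-transform expansion or to external results that do not reach the general unitary case.
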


A proof in the positive case has been given in~\cite[Theorem~5.12]{jalowy_kabluchko_marynych_zeros_profiles_part_II} using exponential profiles.  For $b_n = -n/2$, the statement has been proven in~\cite[Theorem~2.7]{kabluchko2024repeateddifferentiationfreeunitary} using the saddle-point method. In Section~\ref{sec:proof_laguerre}, we shall prove Theorem~\ref{theo:multiplicative_laguerre_polys_zeros} in full generality using a new  method which allows to treat the positive and the unitary cases in a unified way. After minimal modifications,  the same method proves convergence of analytic moments of $\llbracket  L_n^*(\cdot; b_n, c_n)\rrbracket_n$ assuming only $b_n/n \to \beta \in \C\backslash\{-1\}$  and $c_n/c\to \gamma >0$. If the roots of $L_n^*(x; b_n, c_n)$ are positive or unitary, this implies weak convergence. In general, we have the following

\begin{conjecture}
Let $(b_n)_{n\in \N}\subseteq \C$ and $(c_n)_{n\in \N}\subseteq \N$ be sequences such that $b_n/n\to \beta$ and $c_n/n \to \gamma$ as $n\to\infty$, where $\beta\in \C$ and $\gamma>0$ are constants (without further restrictions on $\beta$).  Then, $\llbracket  L_n^*(\cdot; b_n, c_n)\rrbracket_n$ converges (as $n\to\infty$) to some probability measure $\nu_{\beta, \gamma}$, weakly on $\C$.  The analytic moments of $\nu_{\beta, \gamma}$ are given by~\eqref{eq:nu_moments_1} and~\eqref{eq:nu_moments_2}, below.
\end{conjecture}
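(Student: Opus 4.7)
The plan is to promote the already-known convergence of analytic moments to full weak convergence on $\C$ via a logarithmic-potential argument grounded in an exponential-profile analysis of the coefficients of $L_n^*(x; b_n, c_n)$.  The starting point is the explicit formula
$$
L_n^*(x; b_n, c_n) = \sum_{j=0}^n (-1)^{n-j}\binom{n}{j}(j+b_n)^{c_n} x^j,
$$
for which the normalized log-modulus of the coefficients converges, as $n\to\infty$ with $j=\lfloor tn\rfloor$, to
$$
f(t) = -t\log t - (1-t)\log(1-t) + \gamma\log|t+\beta|, \qquad t\in[0,1],
$$
uniformly on compact subsets of $(0,1)$ that avoid $-\beta$.

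First I would establish tightness.  Standard Cauchy/Fujiwara bounds on the moduli of roots of a polynomial, together with the fact that $f$ is bounded above and bounded away from $-\infty$ except near the branch point $t=-\beta$, yield constants $0<r<R<\infty$, independent of $n$, such that every zero of $L_n^*(\cdot; b_n, c_n)$ lies in the annulus $\{r \le |z| \le R\}$; separation from $0$ uses that the constant term equals $(-1)^n b_n^{c_n}\neq 0$ for all large $n$ when $\beta\neq 0$.  Second, I would perform a complex saddle-point analysis on
$$
U_n(z) := \frac{1}{n}\log\bigl|L_n^*(z; b_n, c_n)\bigr| \;\sim\; \max_{t\in[0,1]}\Re\bigl(f_\C(t) + t\log z\bigr),
$$
where $f_\C(t) := -t\log t - (1-t)\log(1-t) + \gamma \log(t+\beta)$ is an appropriate analytic continuation, to obtain pointwise convergence $U_n(z)\to U_\infty(z)$ for $z$ outside the support of the conjectural limit.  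The uniform upper bound on $U_n$ coming from the tightness step, together with the subharmonicity of $U_n$, upgrades this to convergence in $L^1_{\mathrm{loc}}(\C)$.  Third, since $\llbracket L_n^*(\cdot; b_n, c_n)\rrbracket_n = \frac{1}{2\pi n}\Delta \log|L_n^*(\cdot; b_n, c_n)|$ as distributions, $L^1_{\mathrm{loc}}$ convergence of $U_n$ gives weak convergence of $\llbracket L_n^*(\cdot; b_n, c_n)\rrbracket_n$ to $\nu_{\beta,\gamma} := \frac{1}{2\pi}\Delta U_\infty$.  Finally, the $S$-transform~\eqref{eq:S_transform_mult_poisson} of $\nu_{\beta,\gamma}$ and the moment identities~\eqref{eq:nu_moments_1}, \eqref{eq:nu_moments_2} are recovered by computing the critical point $t_*(z)$ of $f_\C(t)+t\log z$ and exploiting the Legendre-type duality between $f_\C$ and $U_\infty$, which is the multiplicative finite-free analogue of the relation between the exponential profile and the free-probabilistic transforms of a limit zero measure.

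The main obstacle is the complex-variable saddle analysis.  For real $\beta \notin [-1,0]$ the relevant saddle is real and the argument already appears in~\cite{jalowy_kabluchko_marynych_zeros_profiles_part_II}; for $\Re\beta = -1/2$ the saddle lies on the unit circle and~\cite{kabluchko2024repeateddifferentiationfreeunitary} covers the case.  For generic complex $\beta$ one has to contend with several coexisting saddles (possibly colliding), with the branch cut of $\log(t+\beta)$ when the steepest-descent contour must cross the ray $\{t : t+\beta \in (-\infty,0]\}$, and with the precise geometry of the support of $\nu_{\beta,\gamma}$, which should be a Stokes set --- the locus of $z$ at which two admissible saddles of $f_\C(t)+t\log z$ yield the same value of $\Re\bigl(f_\C(t)+t\log z\bigr)$.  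Once the support is correctly identified and a single globally valid saddle is selected on each complementary component, the standard saddle-point recipe delivers the uniform pointwise convergence needed to conclude; the degenerate parameter $\beta = -1$ appears as a boundary case where the saddle drifts to $t=1$ and presumably requires a separate perturbative treatment.
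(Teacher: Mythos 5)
This statement is explicitly labelled a \emph{conjecture} in the paper: for general complex $\beta$ the author's ODE method yields only convergence of the analytic moments $\frac1n\sum_j z_{j;n}^k$, and since analytic moments do not determine a planar measure, weak convergence remains open. Your proposal chooses the right framework for closing that gap --- passing from moments to the logarithmic potential $U_n=\frac1n\log|L_n^*|$ and using $\llbracket L_n^*\rrbracket_n=\frac1{2\pi n}\Delta\log|L_n^*|$ --- but it does not actually close it. The decisive step, pointwise convergence $U_n(z)\to U_\infty(z)$ off the limiting support, is asserted via the ``max-term'' heuristic $U_n(z)\sim\max_t\Re\bigl(f_\C(t)+t\log z\bigr)$. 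For a polynomial with genuinely complex coefficients this heuristic is false in general: terms of comparable modulus can cancel because their phases rotate, and the only way to control this is a full steepest-descent analysis with a \emph{global} saddle selection. You correctly list the obstructions (several coexisting or colliding saddles, the branch cut of $\log(t+\beta)$, the Stokes geometry that should cut out the support), but then the argument says ``once the support is correctly identified and a single globally valid saddle is selected \ldots the standard recipe delivers the conclusion.'' That conditional clause is the entire content of the conjecture; nothing in the proposal identifies the Stokes set, proves that exactly one saddle dominates on each complementary component, or rules out oscillatory cancellation along the selected contour. The reasoning is also circular as written: ``pointwise convergence outside the support'' presupposes knowing the support, which is itself defined through the unproved saddle analysis.

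Two smaller points. First, the tightness step's lower bound is not justified: nonvanishing of the constant term $(-1)^nb_n^{c_n}$ gives only $|z|\ge|a_0|/(|a_0|+\max_j|a_j|)$, which can decay exponentially in $n$; fortunately a uniform lower bound is not needed for tightness on $\C$ (only the upper bound $R$ is), and $\log|z|$ is locally integrable near $0$, so this is repairable but should not be claimed as stated. Second, the identification of the analytic moments of the limit with~\eqref{eq:nu_moments_1}--\eqref{eq:nu_moments_2} is actually the easy part and does not require the Legendre duality you invoke: once weak convergence and uniform boundedness of the roots are established, the analytic moments of the limit are the limits of $\sigma_{k;n}$, which the paper's ODE argument already computes for every $\beta\neq-1$. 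In short, the proposal is a reasonable research program --- essentially the exponential-profile/saddle-point program of \cite{jalowy_kabluchko_marynych_zeros_profiles_part_II} and \cite{kabluchko2024repeateddifferentiationfreeunitary} extended off the real/unitary loci --- but it is not a proof, and the missing step is exactly the one that makes the statement a conjecture.
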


The measures $\nu_{\beta,\gamma}$ appearing in Theorem~\ref{theo:multiplicative_laguerre_polys_zeros} are the free multiplicative analogues of the Poisson distribution that appeared in the work of Bercovici and Voiculescu~\cite{bercovici_voiculescu_levy_hincin}; see~\cite[Lemma~6.4]{bercovici_voiculescu_levy_hincin}  for the unitary case when $\Re \beta = -1/2$ and  \cite[Lemma~7.2]{bercovici_voiculescu_levy_hincin} for the positive case when $\beta \in \R \backslash [-1,0]$.
In the next result we collect  some properties of $\nu_{\beta, \gamma}$.
\begin{proposition}[Free multiplicative Poisson distributions]\label{prop:mult_free_poisson_properties}
Let $\gamma>0$ and $\beta\in \C$ be such that either $\beta \in \R \backslash [-1,0]$ or $\Re \beta = -1/2$. Then, there exists a unique probability measure  $\nu_{\beta, \gamma}$ which is supported on a compact subset of $(0,\infty)$ (when $\beta \in \R \backslash [-1,0]$) or on the unit  circle (when $\Re \beta = -1/2$) and has the $S$-transform $S(y)=\exp (\frac{\gamma}{\beta+1+y})$. Moreover, $\nu_{\beta, \gamma}$ has the following properties.
\begin{itemize}
\item[(a)] Moments: For every $k\in \N$, the $k$-th moment of $\nu_{\beta, \gamma}$ is given by
\begin{align}
\int_\C x^k \nu_{\beta, \gamma}(\dd x)
&=
\frac1k [t^{ k-1}] \left( (1+t)^k \eee^{-k \gamma/(1+\beta+t)}\right) \label{eq:nu_moments_1}
\\
&=
\frac {\eee^{- k\gamma/(1 + \beta)}}k  \sum_{j=0}^{k-1}\binom{k}{j+1}\left(\frac {-1}{1+\beta}\right)^{j}
L_{j}^{(-1)}\left(\frac {k\gamma}{1+\beta}\right), \label{eq:nu_moments_2}
\end{align}
where $L_n^{(-1)}(c)= \sum_{j=1}^{n} \binom{n-1}{n-j}\frac{(-c)^{j}}{j!}$, for $n\in \N$, and $L_0^{(-1)}(c) = 1$,  are the associated Laguerre polynomials with $\alpha = -1$.
\item[(b)] Free cumulants: For every $k\in \N$, the $k$-th free cumulant of $\nu_{\beta, \gamma}$ is given by
\begin{align}
\kappa_k
=
\frac{1}{k} [t^{ k-1}] \eee^{-k \gamma/(1+\beta+t)}
=
\frac {\eee^{- k\gamma/(1 + \beta)}}k \left(\frac {-1}{1+\beta}\right)^{k-1}
L_{k-1}^{(-1)}\left(\frac {k\gamma}{1+\beta}\right).  \label{eq:nu_cumulants}
\end{align}
\end{itemize}
\end{proposition}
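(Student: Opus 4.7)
The plan proceeds in three steps. First, for existence of $\nu_{\beta,\gamma}$ with the stated support and $S$-transform $\exp(\gamma/(\beta+1+y))$, I would invoke~\cite[Lemma~7.2]{bercovici_voiculescu_levy_hincin} in the positive case $\beta\in\R\setminus[-1,0]$ and~\cite[Lemma~6.4]{bercovici_voiculescu_levy_hincin} in the unitary case $\Re\beta=-1/2$; alternatively, existence is a by-product of Theorem~\ref{theo:multiplicative_laguerre_polys_zeros} itself. Uniqueness is automatic, since any compactly supported probability measure on $(0,\infty)$ or on the unit circle is determined by its moments, which in turn are read off from $S_\nu$ via~\eqref{eq:S_transform_def}.

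To establish~\eqref{eq:nu_moments_1} and the first equality in~\eqref{eq:nu_cumulants}, I would apply the Lagrange--B\"urmann inversion formula twice. Substituting $S_\nu(y)=\exp(\gamma/(\beta+1+y))$ into~\eqref{eq:S_transform_def} and rearranging gives the functional equation
\[
t = \psi_\nu(t)\cdot\phi(\psi_\nu(t)), \qquad \phi(\psi) := \frac{1}{1+\psi}\exp\bigl(\gamma/(\beta+1+\psi)\bigr),
\]
and since $\phi(0)=\eee^{\gamma/(\beta+1)}\neq 0$, Lagrange inversion gives
\[
m_k = [t^k]\psi_\nu(t) = \frac{1}{k}[\psi^{k-1}]\phi(\psi)^{-k} = \frac{1}{k}[t^{k-1}]\!\left((1+t)^k\eee^{-k\gamma/(\beta+1+t)}\right),
\]
which is~\eqref{eq:nu_moments_1}. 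For the cumulants I would use the general identity $S_\mu(z)R_\mu(zS_\mu(z))=1$: setting $u:=\psi\cdot S_\nu(\psi)$ gives $R_\nu(u)=1/S_\nu(\psi)$ and $\psi=uR_\nu(u)$, so applying Lagrange inversion to $u=\psi\cdot S_\nu(\psi)$ yields $[u^n]\psi=\tfrac{1}{n}[\psi^{n-1}]S_\nu(\psi)^{-n}$ and hence
\[
\kappa_k = [u^{k-1}]R_\nu(u) = \frac{1}{k}[\psi^{k-1}]S_\nu(\psi)^{-k} = \frac{1}{k}[t^{k-1}]\eee^{-k\gamma/(\beta+1+t)}.
\]

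To pass from these coefficient-extraction formulas to the closed-form Laguerre expressions in~\eqref{eq:nu_moments_2} and the second equality in~\eqref{eq:nu_cumulants}, I would use the generating function $\sum_{n\geq 0}L_n^{(-1)}(x)s^n = \exp(-xs/(1-s))$. With the substitution $s=-t/(1+\beta)$ and the decomposition
\[
\frac{-k\gamma}{1+\beta+t} = \frac{-k\gamma}{1+\beta} + \frac{k\gamma}{1+\beta}\cdot\frac{t}{1+\beta+t},
\]
this yields the Taylor expansion
\[
\eee^{-k\gamma/(\beta+1+t)} = \eee^{-k\gamma/(1+\beta)}\sum_{n\geq 0} L_n^{(-1)}\!\bigl(k\gamma/(1+\beta)\bigr)\bigl(-1/(1+\beta)\bigr)^n t^n.
\]
Reading off the coefficient of $t^{k-1}$ directly gives~\eqref{eq:nu_cumulants}, and combining with the binomial expansion of $(1+t)^k$ in the moment formula yields~\eqref{eq:nu_moments_2}. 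The main technical care needed is in justifying the Lagrange--B\"urmann identities at the level of formal power series, which reduces to $\phi(0)\neq 0$ and $S_\nu(0)\neq 0$---both of which hold precisely because $\beta+1\neq 0$ in either regime of the proposition.
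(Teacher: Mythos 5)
Your proposal is correct and follows essentially the same route as the paper: existence via Bercovici--Voiculescu, Lagrange inversion applied to the functional equation coming from the $S$-transform for the moments and for the cumulants (via $u=yS(y)$), and the generating function $\eee^{-c u/(1-u)}=\sum_j L_j^{(-1)}(c)u^j$ with $u=-t/(1+\beta)$ for the closed Laguerre forms. The only (minor, and slightly cleaner) deviation is in the cumulant step: you use $R(u)=\psi(u)/u$ so that $[u^{k-1}]R=[u^k]\psi$ and plain Lagrange inversion applies uniformly for all $k\ge 1$, whereas the paper applies the Lagrange--B\"urmann formula with $G=\phi$, which requires $k\ge2$ and a separate check at $k=1$.
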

Existence of $\nu_{\beta, \gamma}$ has been established in~\cite{bercovici_voiculescu_levy_hincin}; the proof of the remaining properties  will be given in Section~\ref{subsec:free_mult_poisson_properties}. In Lemma~\ref{lem:m_k_gamma_ODE_laguerre}, we shall state a system of ODE's satisfied by the moments of $\nu_{\beta, \gamma}$.

\begin{remark}\label{rem:mult_laguerre_symmetry}
It is easy to check that $L_n^*(x; -n-b, c)=(-1)^{ n+c} x^{ n} L_n^*(x^{-1}; b, c)$. It follows from Theorem~\ref{theo:multiplicative_laguerre_polys_zeros} that $\nu_{-1-\beta, \gamma}$ is the image of $\nu_{\beta, \gamma}$ under the map $x\mapsto 1/x$, for all admissible parameters $\beta, \gamma$.
\end{remark}


\begin{remark}[On $\beta=0$ and $\beta=-1$]
In Theorem~\ref{theo:multiplicative_laguerre_polys_zeros} we excluded the cases $\beta=-1$ and $\beta=0$, which are equivalent by Remark~\ref{rem:mult_laguerre_symmetry}. The exclusion of $\beta=0$ is purely technical: it ensures that all zeros are strictly positive, and the argument extends to $\beta=0$ with only minor modifications. By contrast, for $\beta=-1$ the proof breaks down completely, since the factor $1/(\beta+1)$ appears throughout the argument. Consequently, our approach does not apply to the polynomials
\[
L_n^*(x;0,c_n)=\sum_{j=0}^n (-1)^{n-j}\binom{n}{j}\, j^{c_n}x^j,
\qquad
L_n^*(x;-n,c_n)=\sum_{j=0}^n (-1)^{n-j}\binom{n}{j}\,(j-n)^{c_n}x^j,
\]
in the regime $c_n/n\to\gamma>0$. The asymptotic zero distribution of $L_n^*(x;0,c_n)$ was determined in~\cite[Theorem~4.2]{jalowy_kabluchko_marynych_zeros_profiles_part_I} (with $a=b=1$ there); see also~\cite[Theorem~5.12]{jalowy_kabluchko_marynych_zeros_profiles_part_II}. The limiting measure has an atom of mass $\max(1-\gamma,0)$ at $x=1$ and an absolutely continuous part of mass $\min(\gamma,1)$ with Lebesgue density
\[
x\mapsto -\frac{1}{\pi x}\,\Im\!\left(\frac{1}{1+\gamma^{-1}W_0\!\left(-\gamma \eee^{-\gamma}x^{-1}+\ii 0\right)}\right),
\qquad x\in\bigl(0,\gamma \eee^{1-\gamma}\bigr),
\]
where $W_0$ denotes the principal branch of the Lambert $W$-function. By Remark~\ref{rem:mult_laguerre_symmetry}, the asymptotic zero distribution of $L_n^*(x;-n,c_n)$ is the push-forward of this measure under the map $x\mapsto 1/x$. In particular, the resulting measure has unbounded support, which provides another reason why the method of moments used in the proof of Theorem~\ref{theo:multiplicative_laguerre_polys_zeros} breaks down.

\end{remark}

\subsection{Repeated action of $x \partial_x + b$}
If $P(x)$ is a real-rooted polynomial, then its derivatives $P'(x), P''(x), \ldots$ are also real-rooted -- this follows from Rolle's theorem. The next result is a similar claim for the repeated action of the operator $x\partial_x + b$.

\begin{proposition}[$(x \partial_x + b)^c$ preserves positive/unitary zeros]\label{prop:mult_derivative_acts_in_zeros}
Let $P_n(x)\in \C[x]$ be a polynomial of degree $n\in \N$. Let $c\in \N_0$ and $b\in \C$.
\begin{enumerate}
\item[(a)]  The positive case: If  all roots of $P_n$ are in $[0,\infty)$ and $b\in \R\backslash[-n,0]$,  then all roots of the polynomial $(x \partial_x + b)^c P_n$ are also in $[0,\infty)$.
\item[(b)] The unitary case: If  all roots of $P_n$ are located  on the unit circle $\{|z|=1\}$ and $\Re b = -n/2$,  then  all roots of the polynomial $(x \partial_x + b)^c P_n$ are also located on the unit circle.
\end{enumerate}
\end{proposition}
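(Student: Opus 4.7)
The plan is to mimic the proof of Proposition~\ref{prop:mult_heat_acts_in_zeros}: express the operator $(x\partial_x + b)^c$ applied to $P_n$ as a multiplicative finite free convolution with $L_n^*(\cdot; b, c)$, and then invoke Proposition~\ref{prop:finite_free_preserces_positive_zeros_or_unit_circle} together with Proposition~\ref{prop:mult_laguerre_positive_unitary_zeros}.

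First, I would write $P_n(x) = \sum_{j=0}^n a_{j;n} x^j$. Since $x\partial_x$ acts diagonally on monomials via $x\partial_x \cdot x^j = j x^j$, the operator $(x\partial_x + b)^c$ acts by $(x\partial_x + b)^c x^j = (j+b)^c x^j$, so that
\[
(x\partial_x + b)^c P_n(x) = \sum_{j=0}^n a_{j;n} (j+b)^c x^j.
\]
On the other hand, by the definition~\eqref{eq:mult_finite_free_conv_def} of $\boxtimes_n$ and~\eqref{eq:laguerre_mult_def},
\[
P_n(x) \boxtimes_n L_n^*(x; b, c)
= \sum_{j=0}^n (-1)^{n-j}\frac{a_{j;n}\cdot (-1)^{n-j}\binom{n}{j}(j+b)^c}{\binom{n}{j}} x^j
= \sum_{j=0}^n a_{j;n}(j+b)^c x^j.
\]
Thus
\begin{equation*}
(x\partial_x + b)^c P_n(x) = P_n(x) \boxtimes_n L_n^*(x; b, c).
\end{equation*}

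With this identity in hand, part~(a) follows by combining the hypothesis that all roots of $P_n$ lie in $[0,\infty)$ with Proposition~\ref{prop:mult_laguerre_positive_unitary_zeros}(a), which gives that $L_n^*(\cdot; b, c)$ has only positive roots whenever $b\in \R\backslash[-n,0]$, and then applying the positive case of Proposition~\ref{prop:finite_free_preserces_positive_zeros_or_unit_circle}. Part~(b) is identical in structure: Proposition~\ref{prop:mult_laguerre_positive_unitary_zeros}(b) provides that the roots of $L_n^*(\cdot; b, c)$ lie on the unit circle when $\Re b = -n/2$, and the unitary case of Proposition~\ref{prop:finite_free_preserces_positive_zeros_or_unit_circle} finishes the argument.

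There is no real obstacle here beyond making sure the identity $(x\partial_x+b)^c P_n = P_n \boxtimes_n L_n^*(\cdot; b, c)$ is stated and checked cleanly; once this bookkeeping is done, the substantive content has already been supplied by Propositions~\ref{prop:finite_free_preserces_positive_zeros_or_unit_circle} and~\ref{prop:mult_laguerre_positive_unitary_zeros}. The only point deserving a moment's care is that the degree assumption $\deg P_n = n$ (which matches the $n$ in $L_n^*$ and in $\boxtimes_n$) and the parameter ranges for $b$ are consistent across the two invoked results; both conditions ($b\in\R\setminus[-n,0]$ in the positive case, and $\Re b = -n/2$ in the unitary case) are precisely the ones needed for Proposition~\ref{prop:mult_laguerre_positive_unitary_zeros} to apply at this value of $n$.
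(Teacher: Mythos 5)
Your proposal is correct and is essentially identical to the paper's own proof: both establish the identity $(x\partial_x+b)^c P_n = P_n \boxtimes_n L_n^*(\cdot;b,c)$ via the diagonal action of $x\partial_x$ on monomials and then conclude by combining Proposition~\ref{prop:mult_laguerre_positive_unitary_zeros} with Proposition~\ref{prop:finite_free_preserces_positive_zeros_or_unit_circle}. Your explicit verification of the $\boxtimes_n$ cancellation and the remark on degree consistency are fine; nothing is missing.
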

\begin{proof}
Let $P_n(x) = \sum_{j=0}^n a_{j;n} x^j$, then using~\eqref{eq:mult_finite_free_conv_def} we can write
\begin{equation}\label{eq:mult_differentiation_flow_as_finite_conv}
(x \partial_x + b)^c P_n(x)
=
\sum_{j=0}^n a_{j;n} (j + b)^c x^j
=
P_n(x) \boxtimes_n L_n^*(x; b,c).
\end{equation}
It remains to apply Proposition~\ref{prop:mult_laguerre_positive_unitary_zeros} in combination with Proposition~\ref{prop:finite_free_preserces_positive_zeros_or_unit_circle}.
\end{proof}

Steinerberger~\cite{Steiner21} discovered that if $(P_n(x))_{n\in \N}$ is a sequence of real-rooted polynomials with $\deg P_n = n$ and such that $\llbracket P_n\rrbracket_n$ converges to some probability measure $\rho$, then for every fixed $t\in (0,1)$, the $[tn]$-th derivative of $P_n$ also has some limiting distribution of zeros which can be described in terms of free probability. Proofs have been given in~\cite[Theorems~4.1, 4.4]{hoskins_kabluchko} and~\cite[Theorem~3.7]{arizmendi_garza_vargas_perales}.  A more general  result on  the repeated action of the operator $x^a \partial_x^b$ can be found in~\cite[Section~4.1]{jalowy_kabluchko_marynych_zeros_profiles_part_I}.  The next theorem is an analogous result for the repeated action of $x\partial_x + b$.

\begin{theorem}[Repeated action of $x\partial_x + b$ and the asymptotic distribution of zeros]
Let $(P_n(x))_{n\in \N}$ be a sequence of polynomials  such that $\deg P_n = n$ for all $n\in\N$. Let also $(b_n)_{n\in \N}\subseteq \C$ and $(c_n)_{n\in \N}\subseteq \N$ be sequences such that $b_n/n\to \beta$ and $c_n/n \to \gamma$ as $n\to\infty$, where $\beta\in \C$ and $\gamma>0$ are constants.
\begin{enumerate}
\item[(a)] The positive case: If each $P_n$ has only zeros in $[0,\infty)$ and $\llbracket  P_n \rrbracket_n$ converges  to some probability measure $\rho$ weakly on $[0,\infty)$, and if $b_n\in \R\backslash[-n,0]$ for every $n\in \N$ and $\beta \in \R \backslash [-1,0]$,  then   $\llbracket (x \partial_x + b_n)^{c_n} P_n(x)\rrbracket_n$ converges  to $\rho \boxtimes \nu_{\beta, \gamma}$ weakly on $[0,\infty)$. Here, $\boxtimes$ denotes the free multiplicative convolution of probability measures on $[0,\infty)$.
\item[(b)] The unitary case: If each $P_n$ has only zeros on the unit circle  $\{|z|=1\}$ and $\llbracket  P_n \rrbracket_n$ converges weakly to some probability measure  $\rho$ on $\{|z| = 1\}$,  and if $\Re b_n = -n/2$ for every $n\in \N$ (implying $\Re \beta = -1/2$), then   $\llbracket (x \partial_x + b_n)^{c_n}  P_n(x) \rrbracket_n$ converges weakly to $\rho \boxtimes \nu_{\beta, \gamma}$. Here, $\boxtimes$ denotes the free multiplicative convolution of probability measures on the unit circle.
\end{enumerate}
\end{theorem}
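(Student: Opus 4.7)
The plan is to mirror the structure of the proof of the preceding multiplicative heat flow theorem. The key operator identity, already established in the proof of Proposition~\ref{prop:mult_derivative_acts_in_zeros}, is
\begin{equation*}
(x\partial_x + b_n)^{c_n} P_n(x) = P_n(x) \boxtimes_n L_n^*(x; b_n, c_n),
\end{equation*}
which reduces the problem to a statement about the asymptotic zero distribution of a finite free multiplicative convolution.

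Next I would invoke Theorem~\ref{theo:multiplicative_laguerre_polys_zeros} to conclude that $\llbracket L_n^*(\cdot; b_n, c_n)\rrbracket_n$ converges weakly to $\nu_{\beta,\gamma}$, in the appropriate geometric setting. In the positive case the hypothesis $b_n \in \R\setminus[-n,0]$ (which forces $\beta \in \R\setminus[-1,0]$) places us under Theorem~\ref{theo:multiplicative_laguerre_polys_zeros}(a), so convergence takes place weakly on $[0,\infty)$ and, by Proposition~\ref{prop:mult_laguerre_positive_unitary_zeros}(a), every $L_n^*(\cdot; b_n, c_n)$ has nonnegative roots. In the unitary case the hypothesis $\Re b_n = -n/2$ (forcing $\Re\beta=-1/2$) places us under Theorem~\ref{theo:multiplicative_laguerre_polys_zeros}(b), and Proposition~\ref{prop:mult_laguerre_positive_unitary_zeros}(b) guarantees that each $L_n^*(\cdot; b_n, c_n)$ has its roots on the unit circle.

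Finally I would combine these inputs with the known continuity of the operation $\boxtimes_n$ with respect to weak convergence of zero distributions. In the positive case, both factors have nonnegative roots and their zero distributions converge weakly on $[0,\infty)$, so \cite[Theorem~1.4]{arizmendi_garza_vargas_perales} (or equivalently \cite[Theorem~3.7]{jalowy_kabluchko_marynych_zeros_profiles_part_I}) gives
\begin{equation*}
\llbracket P_n \boxtimes_n L_n^*(\cdot; b_n, c_n)\rrbracket_n \longrightarrow \rho \boxtimes \nu_{\beta,\gamma}
\end{equation*}
weakly on $[0,\infty)$. In the unitary case, both factors have roots on the unit circle, and the analogous continuity result on the circle, namely \cite[Proposition~3.4]{arizmendi_garza_vargas_perales} in the version recorded as \cite[Proposition~2.9]{kabluchko2024repeateddifferentiationfreeunitary}, yields the same weak convergence on $\{|z|=1\}$.

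No genuine obstacle arises: the theorem is essentially a repackaging of Theorem~\ref{theo:multiplicative_laguerre_polys_zeros} together with the operator-theoretic identity of Proposition~\ref{prop:mult_derivative_acts_in_zeros} and the off-the-shelf continuity of $\boxtimes_n$ in each of the two geometric settings. The only point requiring mild care is making sure the hypotheses on $(b_n)$ are carried through so that each $L_n^*(\cdot; b_n, c_n)$ actually lies in the correct class (positive-rooted or unitary-rooted), which is precisely why Proposition~\ref{prop:mult_laguerre_positive_unitary_zeros} must be cited at the intermediate step.
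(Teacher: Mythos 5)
Your proposal is correct and follows essentially the same route as the paper: the identity $(x\partial_x+b_n)^{c_n}P_n = P_n \boxtimes_n L_n^*(\cdot;b_n,c_n)$, the convergence $\llbracket L_n^*(\cdot;b_n,c_n)\rrbracket_n\to\nu_{\beta,\gamma}$ from Theorem~\ref{theo:multiplicative_laguerre_polys_zeros}, and the continuity of $\boxtimes_n$ under weak convergence via the same cited results. Your extra appeal to Proposition~\ref{prop:mult_laguerre_positive_unitary_zeros} to confirm the roots lie in the correct class is a harmless (and reasonable) addition.
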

\begin{proof}
Recall from~\eqref{eq:mult_differentiation_flow_as_finite_conv} that $(x \partial_x + b_n)^{c_n} P_n(x) =P_n(x) \boxtimes_n L_n^*(x; b_n,c_n)$.

\smallskip
\noindent
\emph{Positive case}: We know that $\llbracket  P_n \rrbracket_n \to \rho$ and $\llbracket  L_n^*(x; b_n,c_n)\rrbracket_n \to \nu_{\beta, \gamma}$ weakly on $[0,\infty)$. By known results, see~\cite[Theorem~1.4]{arizmendi_garza_vargas_perales}  or~\cite[Theorem~3.7]{jalowy_kabluchko_marynych_zeros_profiles_part_I}, this implies  $\llbracket P_n(x) \boxtimes_n L_n^*(x; b_n,c_n)\rrbracket_n\to \rho \boxtimes \nu_{\beta, \gamma}$ weakly on $[0,\infty)$.

\smallskip
\noindent
\emph{Unitary case}:  We know that $\llbracket  P_n \rrbracket_n \to \rho$ and $\llbracket  L_n^*(x; b_n,c_n)\rrbracket_n \to \nu_{\beta, \gamma}$ weakly on $\{|z| = 1\}$.
By~\cite[Proposition~3.4]{arizmendi_garza_vargas_perales} (see also~\cite[Proposition~2.9]{kabluchko2024repeateddifferentiationfreeunitary} for the version needed here), this implies $\llbracket P_n(x) \boxtimes_n L_n^*(x; b_n,c_n)\rrbracket_n\to \rho \boxtimes \nu_{\beta, \gamma}$ weakly on the unit circle.
\end{proof}

\begin{remark}
As Daniel Perales pointed out to us, the operator $x\partial_x-n$ (corresponding to $b_n=-n$) agrees, up to an overall sign, with the $0$-polar derivative studied in~\cite[Theorem~1.1]{perales2025asymptoticrootdistributionpolynomials}.
\end{remark}

\section{Proof of Theorem~\ref{theo:multiplicative_hermite_polys_zeros}}\label{sec:proof_hermite}
The present section is devoted to the proof of Theorem~\ref{theo:multiplicative_hermite_polys_zeros}.
\subsection{Setup}\label{subsec:setup_hermite}
Let us fix the notation.
For $s\in \C$ (in most cases assuming $s\in \R$) we consider the polynomials
$$
P_n(x;s)   = \eee^{- \frac s{2n} (x \partial_x)^2 } (x-1)^n = \sum_{j=0}^n (-1)^{n-j} \binom{n}{j} \eee^{- s \frac{j^2}{2n} } x^j = H_n^*(\eee^{-s/2} x; s/n).
$$
Let  $x_{1;n}(s)\in \C,\dots,x_{n;n}(s)\in \C$ be the zeros of the polynomial $\Pn(\cdot;s)$ (counted with multiplicity), so that
\begin{equation}\label{eq:root-factorization-n}
\Pn(x;s) =   \eee^{-ns/2} \prod_{j=1}^n\left(x-x_{j;n}(s)\right),
\;
\llbracket  P_n(\cdot; s)\rrbracket_n = \frac 1n \sum_{j=1}^n  \delta_{x_{j;n}(s)},
\;
\llbracket H_n^*(\cdot; s/n)\rrbracket_n = \frac 1n \sum_{j=1}^n  \delta_{\eee^{-s/2} x_{j;n}(s)}.
\end{equation}
We denote the $k$-th moment of the zero distribution  $\llbracket  P_n(\cdot; s)\rrbracket_n$ by
\begin{equation}\label{eq:sigma-def-n}
\sg_{k;n}(s) := \frac1n\sum_{j=1}^n x_{j;n}(s)^k,
\quad k\in \N_0.
\end{equation}

Note that $\sg_{0;n}(s)= 1$ for all $s\in \R$, and $\sigma_{k; n}(0) = 1$ for all $k\in  \mathbb N$.
Observe that for every fixed $n\in \mathbb N$ and $k\in \mathbb N_0$, the function $\sigma_{k;n}(s)$ is analytic in $s\in \mathbb C$. Indeed, using Newton's identities, we can express $\sigma_{k;n}(s)$ as a polynomial in the coefficients of $\eee^{ns/2} P_n(x;s)$, which  are analytic functions of  $s$.

Define the generating function
\begin{equation}\label{eq:Sn-def}
\Sigma_n(x;s) := \sum_{k=0}^\infty\sg_{k;n}(s) x^{-k}
 = \frac1n\sum_{j=1}^n\frac{1}{1-x_{j;n}(s)x^{-1}}
 = \frac xn\sum_{j=1}^n\frac{1}{x-x_{j;n}(s)}.
\end{equation}
Recognizing on the right-hand side the logarithmic derivative of $P_n$, we can write
\begin{equation}\label{eq:Sn-from-Pn-for-Hermite}
\Sigma_n(x;s) = \frac{x}{n} \frac{\partial_x \Pn(x ;s)}{\Pn(x ;s)}.
\end{equation}
It follows from~\eqref{eq:Sn-def} that  $\Sigma_n(\infty; s) := \lim_{|x|\to\infty} \Sigma_n(x;s) = 1$.

\begin{remark}\label{rem:analyticity_sigma_cauchy_bound}
For every $n\in \mathbb N$ and $A>0$, there is a number $C_{n,A}$ such that the function  $x \mapsto \Sigma_n(x;s)$ is analytic on $\{x\in \mathbb C: |x|> C_{n,A}\}$ provided $|s|\leq A$. Indeed, by Cauchy's bound, all roots of $P_n(x;s)$ have absolute value smaller than
$$
C_{n,A} := 2 + \max_{j=0,\ldots, n-1} \binom nj|\eee^{-\frac{s}{2n} (j^2-n^2)}| \leq  2+ 2^n \eee^{A n/2}.
$$

\end{remark}

\subsection{Plan of the proof}
To prove that the zero distributions $\llbracket  P_n(\cdot; s)\rrbracket_n$ converge as $n\to\infty$, we shall argue by the method of moments. Our aim is to show that for every $k\in \N_0$ and $s\in \R$,
\begin{equation}\label{eq:hermite_proof_moments_converge_aim}
\lim_{n\to\infty} \sigma_{k; n} (s) =  \frac{\eee^{sk}}{k} [u^{ k-1}] \left((1+u)^{k} \eee^{ sk u}\right).
\end{equation}
By~\eqref{eq:root-factorization-n},  the $k$-th moment of $\llbracket H_n^*(\cdot; s/n)\rrbracket_n$ is $\eee^{-sk/2} \sg_{k;n}(s)$, which gives~\eqref{eq:mu_s_moments1}.

The proof of~\eqref{eq:hermite_proof_moments_converge_aim} (and Theorem~\ref{theo:multiplicative_hermite_polys_zeros}) proceeds in the following steps.
\begin{itemize}
\item[(1)] We derive a multiplicative heat PDE satisfied by $P_n(x;s)$.
\item[(2)] We use it to derive a PDE for $\Sigma_n(x;s) = \sum_{k=0}^\infty\sg_{k;n}(s) x^{-k} =  \frac{x}{n} \frac{\partial_x \Pn(x ;s)}{\Pn(x ;s)}$.
\item[(3)] Using coefficient extraction, we derive a triangular system of ODE's satisfied by the functions $\sigma_{k;n}(s)$, $k\in \N_0$.
\item[(4)] We show that the functions $\sigma_{k;\infty}(s) := \lim_{n\to\infty} \sigma_{k;n}(s)$, $k\in \N_0$,  exist and satisfy a triangular system of ODE's which is a natural continuum limit (as $n\to\infty$) of the ODE system satisfied  by the functions $\sigma_{k;n}(s)$, $k\in \N_0$.
\item[(5)] We show that the right-hand side of~\eqref{eq:hermite_proof_moments_converge_aim} satisfies the same system of ODE's as the $\sigma_{k;\infty}(s)$'s. Uniqueness of solution proves~\eqref{eq:hermite_proof_moments_converge_aim}.
\item[(6)] We conclude the proof of Theorem~\ref{theo:multiplicative_hermite_polys_zeros} by showing that the convergence of moments implies the weak convergence of the zero distributions.
\end{itemize}

\subsection{PDE for the polynomial.}
In the following, $\partial_x$ and $\partial_s$ denote partial derivative operators in the variables $x$ and $s$. Let  $L=x\partial_x$.
\begin{proposition}
Let $n\in \mathbb N$. The function $P_n(x;s)$
obeys the PDE
$$
\begin{cases}
\partial_s P_n (x;s) = - \frac 1{2n}  L^2 P_n(x;s), \qquad s\in \mathbb R, \quad  x\in \mathbb C,
\\
P_n(x;0) = (x-1)^n.
\end{cases}
$$
\end{proposition}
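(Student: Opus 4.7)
The plan is to verify the PDE by direct termwise differentiation, exploiting the fact that $L = x\partial_x$ acts diagonally on the monomial basis. Indeed, $L x^j = j x^j$, so $x^j$ is an eigenfunction of $L^2$ with eigenvalue $j^2$, and the operator $\mathrm{e}^{-\frac{s}{2n} L^2}$ is by definition the linear extension of $x^j \mapsto \mathrm{e}^{-\frac{s}{2n} j^2} x^j$.

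The first step is to use the explicit expansion already recorded in Section~\ref{subsec:setup_hermite}, namely
\[
P_n(x;s) = \sum_{j=0}^n (-1)^{n-j} \binom{n}{j}\mathrm{e}^{-\frac{s}{2n} j^2} x^j.
\]
Differentiating termwise in $s$ (which is justified because the sum has only finitely many terms, each entire in $s$) yields
\[
\partial_s P_n(x;s) = -\frac{1}{2n}\sum_{j=0}^n (-1)^{n-j}\binom{n}{j} j^2 \mathrm{e}^{-\frac{s}{2n} j^2} x^j,
\]
while applying $L^2$ termwise gives exactly $\sum_{j=0}^n (-1)^{n-j}\binom{n}{j} j^2 \mathrm{e}^{-\frac{s}{2n} j^2} x^j$. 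Comparing the two identities yields $\partial_s P_n(x;s) = -\frac{1}{2n} L^2 P_n(x;s)$. The initial condition is immediate: setting $s=0$ gives $P_n(x;0) = \sum_{j=0}^n (-1)^{n-j}\binom{n}{j} x^j = (x-1)^n$ by the binomial theorem.

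There is no genuine obstacle here; the proposition is essentially a restatement of the definition of the multiplicative heat operator $\mathrm{e}^{-\frac{s}{2n} L^2}$ as a one-parameter semigroup generated by $-\frac{1}{2n} L^2$. The only thing worth emphasizing in the write-up is that the sum is finite, so all the termwise operations (differentiation in $s$, application of $L^2$, commutation of the two) are rigorously valid without any convergence analysis.
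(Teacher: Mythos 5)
Your proof is correct and is essentially the same as the paper's: both verify the PDE termwise on the monomial expansion, using that $L^2 x^j = j^2 x^j$ and the finiteness of the sum. The initial condition via the binomial theorem also matches the paper's (implicit) treatment.
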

\begin{proof}
Clearly, $L x^k = (x \partial_x) x^k = k x^k$ and $L^2 x^k = (x \partial_x)^2 x^k = k^2 x^k$. Therefore,
$$
\partial_s ( \eee^{- s \frac{k^2}{2n}}   x^k )
=
- \frac 1 {2n} \eee^{- s \frac{k^2}{2n}}  k^2 x^k
=
- \frac 1 {2n} \eee^{- s \frac{k^2}{2n}}  L x^k
=
- \frac 1 {2n} L^2 \left(\eee^{- s \frac{k^2}{2n}} x^k\right).
$$
By linearity of $\partial_s$ and $L$, this gives $\partial_s P_n (x;s) = - \frac 1{2n}  L^2 P_n(x;s)$.
\end{proof}

\subsection{PDE for the generating function of the finite-$n$ moments.}
Let $n\in \mathbb N$. We claim that $\Sigma_n(x;s)$ satisfies the following (viscous) Burgers-type PDE:
$$
\partial_s \Sigma_n  =  -\tfrac12 L(\Sigma_n^2) - \tfrac{1}{2n} L^2\Sigma_n,\qquad L=x\partial_x.
$$
The range of the variables is: $s\in (-A,A)$, $|x|>C_{n,A}$, with arbitrary $A>0$; see Remark~\ref{rem:analyticity_sigma_cauchy_bound}.
\begin{proof}
We use  the PDE for $P_n(x;s)$ together with~\eqref{eq:Sn-from-Pn-for-Hermite}.
Put $u_n(x;s):=\log P_n(x;s)$. Then $\partial_s P_n= -\frac{1}{2n}L^2P_n$ implies
\begin{equation}\label{eq:ODE_u_n}
\partial_s u_n = \frac{\partial_s P_n}{P_n} = -\frac 1 {2n} \frac{L^2 P_n}{P_n} = -\frac 1 {2n} \frac{L^2 (\eee^{u_n})}{\eee^{u_n}}
=-\frac{1}{2n}\left((Lu_n)^2+L^2u_n\right),
\end{equation}
since $L\eee^{u_n} = x\partial_x (\eee^{u_n}) =   x \eee^{u_n} (\partial_x u_n) = \eee^{u_n} L u_n$ and hence
$$
L^2(\eee^{u_n}) = L (\eee^{u_n} L u_n) = L(\eee^{u_n}) L u_n +  \eee^{u_n} L^2 u_n
=\eee^{u_n}\left((Lu_n)^2+L^2u_n\right).
$$
By definition, $\Sigma_n=\frac{x}{n}\frac{\partial_x P_n}{P_n}=\frac{1}{n}Lu_n$. Hence $Lu_n = n\Sigma_n$ and $L^2u_n =n L\Sigma_n$. It follows that
$$
\partial_s \Sigma_n = \frac 1n \partial_s Lu_n =  \frac 1n L \partial_s u_n \stackrel{\eqref{eq:ODE_u_n}}{=} -\frac 1 {2n^2} L \left((Lu_n)^2+L^2u_n\right) = -\frac 1{2n^2} L (n^2 \Sigma^2_n + n L \Sigma_n),
$$
which gives the claimed PDE after cancellation.
\end{proof}

\begin{remark}[Log-coordinate form]
With $y=\log x$ and $\tilde \Sigma_n(y,s):=\Sigma_n(\eee^y;s)$, the equation becomes
$$
\partial_s \tilde \Sigma_n(y;s) + \tilde \Sigma_n(y;s)  \partial_y \tilde \Sigma_n(y;s)
= -\frac{1}{2n} \partial_y^2\tilde \Sigma_n(y;s),
$$
i.e.\ the classical Burgers equation with (here, negative) viscosity $-1/(2n)$.
\end{remark}

\subsection{Finite-$n$ ODE system for power sums.}\label{subsec:finite_n_ODE}
Let $n\in \mathbb N$ be fixed. We claim that the functions $\sigma_{k;n}(\cdot)$, $k\in \mathbb N_0$, solve the ODE system
\begin{equation}\label{eq:ODE_system_finite_n}
\begin{aligned}
&\sigma_{k;n}'(s)
=\frac{k}{2}\sum_{j=0}^{k}\sigma_{j;n}(s)\sigma_{k-j;n}(s) - \frac{k^2}{2n} \sigma_{k;n}(s), \qquad k\in \mathbb N, \qquad s\in \mathbb R,
\\
&\sigma_{0;n}(s) =  1 \quad (\text{for all } s\in \mathbb R), \qquad \sigma_{k; n}(0) = 1 \quad (\text{for all } k\in  \mathbb N).
\end{aligned}
\end{equation}
\begin{example}
The first three equations are
$$
\begin{aligned}
\sigma_{1;n}'=(1-\tfrac{1}{2n}) \sigma_{1;n},
\quad
\sigma_{2;n}'=(2-\tfrac{4}{2n})\sigma_{2;n}+\sigma_{1;n}^2,
\quad
\sigma_{3;n}'=(3-\tfrac{9}{2n})\sigma_{3;n}+\tfrac{3}{2}\sigma_{1;n}\sigma_{2;n}.
\end{aligned}
$$
\end{example}
\begin{remark}
The solution to~\eqref{eq:ODE_system_finite_n}  is unique.  In fact, the system can be solved successively starting with $\sigma_{0;n}\equiv 1$: If we already know $\sigma_{1;n}(s), \ldots, \sigma_{k-1;n}(s)$, then the equation for $\sigma_{k;n}'$ determines $\sigma_{k;n}$ uniquely.
\end{remark}
\begin{proof}[Proof of~\eqref{eq:ODE_system_finite_n}]
Start from the  PDE
\begin{equation}\label{eq:PDE_for_Sigma_proof_tech1}
\partial_s \Sigma_n  =  -\tfrac12 L(\Sigma^2_n) - \tfrac{1}{2n} L^2\Sigma_n,\qquad L=x\partial_x.
\end{equation}
Expand
$$
\Sigma_n(x;s)=\sum_{k = 0}^\infty\sigma_{k;n}(s) x^{-k},\qquad
\Sigma^2_n(x;s)=\sum_{k = 0}^\infty c_{k;n}(s) x^{-k},\quad
c_{k;n}(s):=\sum_{j=0}^{k}\sigma_{j;n}(s)\sigma_{k-j;n}(s).
$$
Since $L x^{-k}=-k x^{-k}$ and $L^2 x^{-k}=k^2 x^{-k}$, we get
$$
L(\Sigma^2_n) = \sum_{k = 0}^\infty(-k) c_{k;n}(s) x^{-k},\qquad
L^2\Sigma_n = \sum_{k = 0}^\infty k^2 \sigma_{k;n}(s) x^{-k}.
$$
Coefficient extraction of $x^{-k}$ in the PDE~\eqref{eq:PDE_for_Sigma_proof_tech1} yields
$$
\sigma_{k;n}'(s)=\frac{k}{2} c_{k;n}(s) - \frac{k^2}{2n} \sigma_{k;n}(s)
=\frac{k}{2}\sum_{j=0}^{k}\sigma_{j;n}(s)\sigma_{k-j;n}(s) - \frac{k^2}{2n} \sigma_{k;n}(s),
$$
for every $k\in \mathbb N$. The initial conditions follow from the definition of $\sigma_{k;n}$.
\end{proof}

\subsection{Formal inviscid limit.}
Consider an ODE system obtained from~\eqref{eq:ODE_system_finite_n} by taking the \emph{formal} $n\to\infty$ limit:
\begin{equation}\label{eq:ODE_system_infinite_n}
\begin{aligned}
&\sigma_{k;\infty}'(s)
=\frac{k}{2}\sum_{j=0}^{k}\sigma_{j;\infty}(s)\sigma_{k-j;\infty}(s), \qquad k\in \mathbb N, \qquad s\in \mathbb R,
\\
&\sigma_{0;\infty}(s) =  1 \quad (\text{for all } s\in \mathbb R), \qquad \sigma_{k; \infty}(0) = 1 \quad (\text{for all } k\in  \mathbb N).
\end{aligned}
\end{equation}
This system, which can be integrated successively,  admits a unique solution and  defines a sequence of functions $\sigma_{k;\infty}(s)$, $k\in \mathbb N_0$.

\subsection{Forced ODE systems.}
Note that in the equation for $\sigma_{k;n}'$, the right-hand side contains the term  $(k-\frac{k^2}{2n})\sigma_{k;n}$.
To remove it, we introduce the functions $\eta_{k;n}(s)$ and $\eta_{k;\infty}(s)$ by
\begin{equation}\label{eq:etas_def}
\sigma_{k;n}(s)=\eee^{\left(k-\frac{k^2}{2n}\right)s} \eta_{k;n}(s),
\qquad
\sigma_{k;\infty}(s)=\eee^{k s} \eta_{k;\infty}(s),
\qquad k\in \mathbb N_0, \qquad s\in \mathbb R.
\end{equation}
In the finite-$n$ case, the ODE system~\eqref{eq:ODE_system_finite_n} for $\sigma_{k;n}(s)$ is equivalent to
\begin{equation}\label{eq:ODE_system_etas_finite_n}
\begin{aligned}
&\eta_{k;n}'(s)
=\frac{k}{2}\sum_{j=1}^{k-1}
\exp \left(\frac{j(k-j)}{n} s\right)
\eta_{j;n}(s) \eta_{k-j;n}(s), \qquad k\in \mathbb N, \qquad s\in \mathbb R,
\\
&\eta_{0;n}(s) \equiv  1 \quad (\text{for all } s\in \mathbb R), \qquad \eta_{k; n}(0) = 1 \quad (\text{for all } k\in  \mathbb N).
\end{aligned}
\end{equation}
Similarly, the ODE system~\eqref{eq:ODE_system_infinite_n} is equivalent to
\begin{equation}\label{eq:ODE_system_etas_infinite_n}
\begin{aligned}
&\eta_{k;\infty}'(s)
=\frac{k}{2}\sum_{j=1}^{k-1}
\eta_{j;\infty}(s) \eta_{k-j;\infty}(s), \qquad k\in \mathbb N,\qquad s\in \mathbb R,
\\
&\eta_{0;\infty}(s) \equiv  1 \quad (\text{for all } s\in \mathbb R), \qquad \eta_{k; }(0) = 1 \quad (\text{for all } k\in  \mathbb N).
\end{aligned}
\end{equation}

\subsection{Convergence to the inviscid limit.}
Our aim is now to prove that for every $k\in \mathbb N$ and $A>0$, we have
$$
\lim_{n\to\infty}\sigma_{k;n}(s) = \sigma_{k;\infty}(s) \qquad \text{ uniformly in } s\in [-A,A].
$$
By~\eqref{eq:etas_def}, it suffices to prove a similar claim for $\eta_{k;n}(s)$ and $\eta_{k;\infty}(s)$.  This will be done in the next
\begin{proposition}[Uniform convergence for the triangular ODE system] \label{prop:eta_converge_hermite}
 For each $n\in\mathbb{N}$, let
$\{\eta_{m;n}\}_{m=1}^\infty\subseteq C^\infty(\mathbb R)$ solve the ODE system~\eqref{eq:ODE_system_etas_finite_n}.
Let also $\{\eta_{m;\infty}\}_{m=1}^\infty\subseteq C^\infty(\mathbb R)$ solve~\eqref{eq:ODE_system_etas_infinite_n}.
Fix $A>0$.
Then, for every $k\in \mathbb N$ there exists a constant $C_{k,A}<\infty$ (depending only on $k$ and $A$)  such that
\[
\sup_{s\in [-A,A]} |\eta_{k;n}(s)-\eta_{k;\infty}(s)|  \le  \frac{C_{k,A}}{n}
\qquad\text{for all }n\in\mathbb{N}.
\]
In particular, for every $k\in \mathbb N$ we have $\lim_{n\to\infty}\eta_{k;n}(s) = \eta_{k;\infty}(s)$ uniformly in $s\in [-A,A]$.
\end{proposition}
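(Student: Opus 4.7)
The plan is to argue by strong induction on $k$, exploiting the fact that both ODE systems \eqref{eq:ODE_system_etas_finite_n} and \eqref{eq:ODE_system_etas_infinite_n} are triangular: the right-hand side of the equation for $\eta_{k;\bullet}'$ involves only $\eta_{1;\bullet},\dots,\eta_{k-1;\bullet}$, with no $\eta_{k;\bullet}$ feedback term on the right. Consequently, $\eta_{k;\bullet}$ is recovered from the lower-order functions by a single integration in $s$, and no Gronwall-type estimate will be needed; a direct integration bound will suffice.

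The base case $k=1$ is immediate: the sum $\sum_{j=1}^{k-1}$ is empty, so $\eta_{1;n}'=\eta_{1;\infty}'=0$, and the initial condition forces $\eta_{1;n}\equiv\eta_{1;\infty}\equiv 1$. For the inductive step I will strengthen the hypothesis to include a uniform-in-$n$ boundedness statement: in addition to $\sup_{[-A,A]}|\eta_{j;n}-\eta_{j;\infty}|\le C_{j,A}/n$, I will assume that $\sup_{n}\sup_{[-A,A]}|\eta_{j;n}(s)|\le M_{j,A}$ (and the same for $\eta_{j;\infty}$) for $j=1,\dots,k-1$. Setting $\delta_{k;n}:=\eta_{k;n}-\eta_{k;\infty}$ and subtracting the two ODEs gives
\[
\delta_{k;n}'(s) = \frac{k}{2}\sum_{j=1}^{k-1}\Bigl[\bigl(e^{j(k-j)s/n}-1\bigr)\eta_{j;n}(s)\eta_{k-j;n}(s) + \bigl(\eta_{j;n}\eta_{k-j;n}-\eta_{j;\infty}\eta_{k-j;\infty}\bigr)(s)\Bigr].
\]
The first bracket is $O(1/n)$ on $[-A,A]$ by the elementary inequality $|e^{x}-1|\le |x|e^{|x|}$ applied with $x=j(k-j)s/n$, together with the bound $j(k-j)\le k^{2}/4$ and the uniform bounds $|\eta_{j;n}|\le M_{j,A}$. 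The second bracket, rewritten as $(\eta_{j;n}-\eta_{j;\infty})\eta_{k-j;n}+\eta_{j;\infty}(\eta_{k-j;n}-\eta_{k-j;\infty})$, is also $O(1/n)$ by the inductive hypothesis combined with the uniform bounds. Since $\delta_{k;n}(0)=0$, integrating from $0$ then yields $\sup_{[-A,A]}|\delta_{k;n}|\le C_{k,A}/n$.

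There is no genuine obstacle in this argument: the structural features of the system (triangularity and the absence of any self-coupling term) make it almost mechanical. The one point that requires mild care is closing the induction on the uniform boundedness of $\eta_{k;n}$ on $[-A,A]$, which must be propagated together with the convergence estimate. This follows from the same ODE: once $\eta_{1;n},\dots,\eta_{k-1;n}$ are uniformly bounded, the factor $e^{j(k-j)s/n}$ is bounded by $e^{k^{2}A/4}$ for $|s|\le A$ and $n\ge 1$ (since $j(k-j)\le k^{2}/4$), so $|\eta_{k;n}'(s)|$ is dominated by a constant depending only on $k$ and $A$; integrating against the initial value $1$ then produces a uniform bound $M_{k,A}$. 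The uniform convergence statement of the proposition is immediate from the $1/n$ rate.
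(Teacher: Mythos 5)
Your proposal is correct and follows essentially the same route as the paper: a strong induction on $k$ that propagates a uniform-in-$n$ bound on $[-A,A]$ alongside the $O(1/n)$ convergence estimate, splits the difference of the right-hand sides into the $(\eee^{j(k-j)s/n}-1)$-term and the product-difference term, and closes by a single integration using the triangular structure (no Gronwall needed). The only cosmetic difference is that the paper runs the boundedness induction and the rate induction as two separate steps, whereas you interleave them; this changes nothing of substance.
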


\begin{proof}
We argue by induction on $k$.  Fix $A>0$ and let  $\|f\|_\infty:= \sup_{x\in [-A,A]} |f(x)|$.

\smallskip
\noindent\textit{Step 1: Uniform bounds.}
We claim that there exist finite constants $B_1,B_2,\ldots$ such that, for all $m\in \mathbb N$,
\begin{equation}\label{eq:uniform_bounds_ODE_system}
\sup_{n\in\mathbb{N}}\ \|\eta_{m;n}\|_\infty\le B_m,
\qquad
\|\eta_{m;\infty}\|_\infty\le B_m.
\end{equation}

For $m=1$, we have $\eta_{1;n}\equiv\eta_{1;\infty}\equiv 1$ since the sums in~\eqref{eq:ODE_system_etas_finite_n} and~\eqref{eq:ODE_system_etas_infinite_n} are empty. Thus,  we can take $B_1=1$.
Assume that~\eqref{eq:uniform_bounds_ODE_system} holds for all indices $m < k$. To prove that it holds for $m=k$, we integrate~\eqref{eq:ODE_system_etas_finite_n} and use $|s|\le A$:
\[
\|\eta_{k;n}\|_\infty
=
\sup_{s\in [-A,A]}
|\eta_{k;n}(s)|
\le 1 + A \cdot \frac{k}{2}\sum_{j=1}^{k-1} \eee^{ j(k-j)A} B_j B_{k-j}.
\]
From~\eqref{eq:ODE_system_etas_infinite_n} we similarly obtain
\[
\|\eta_{k;\infty}\|_\infty
=
\sup_{s\in [-A,A]}|\eta_{k;\infty}(s)|
\le 1 + A \cdot \frac{k}{2}\sum_{j=1}^{k-1} B_j B_{k-j}.
\]
Define $B_k$ as the maximum of the two right-hand sides. This proves the uniform bounds~\eqref{eq:uniform_bounds_ODE_system} for $m=k$ and completes the induction.

\smallskip
\noindent\textit{Step 2: Convergence with rate.}
Define $\delta_{m;n}(s):=\eta_{m;n}(s)-\eta_{m;\infty}(s)$ for $m\in \N, n\in \N$. We claim: For every $m\in \N$ there exists a finite constant $C_{m,A}$ such that $\|\delta_{m;n}\|_{\infty}\le C_{m,A}/n$ for all $n\in \N$.

We use induction on $m$.  The case $m=1$ holds since $\eta_{1;n}\equiv\eta_{1;\infty}\equiv 1$. Assume the claim holds for all indices $m<k$.
To prove it for $m=k$, subtract the integral forms of \eqref{eq:ODE_system_etas_finite_n} and~\eqref{eq:ODE_system_etas_infinite_n} to get
\[
\delta_{k;n}(s)
=\frac{k}{2}\sum_{j=1}^{k-1}\int_0^s
\left(
\eee^{\frac{j(k-j)}{n}\tau} \eta_{j;n}(\tau) \eta_{k-j;n}(\tau)
-\eta_{j;\infty}(\tau) \eta_{k-j;\infty}(\tau)
\right) \dd\tau,
\qquad
s\in \R.
\]
Split the integrand into $I(\tau)+J(\tau)$ with
\[
I(\tau):= \left(\eee^{\frac{j(k-j)}{n}\tau}-1\right)\eta_{j;n}(\tau)\eta_{k-j;n}(\tau),
\qquad
J(\tau) := \eta_{j;n}(\tau)\eta_{k-j;n}(\tau)-\eta_{j;\infty}(\tau)\eta_{k-j;\infty}(\tau).
\]
In the following, $C', C'',\ldots$ denote finite constants that depend only on $k$ and $A$.

\smallskip
\noindent\textit{Step 2A.}
To bound $I(\tau)$,  we observe that
\[
\max_{j=1,\ldots, k-1} \sup_{\tau \in [-A,A]} \left|\eee^{\frac{j(k-j)}{n}\tau}-1\right|
\le \frac{C'}n.
\]
Together with Step~1 this gives
\[
\left|\int_0^s I(\tau)  \dd \tau\right|
\le \frac{C'}{n} A B_jB_{k-j}, \qquad s\in [-A,A].
\]

\smallskip
\noindent\textit{Step 2B.}
To bound $J(\tau)$, we write
\[
\eta_{j;n}(s)\eta_{k-j;n}(s)-\eta_{j;\infty}(s)\eta_{k-j;\infty}(s)
= \delta_{j;n}(s) \eta_{k-j;\infty}(s)+\eta_{j;n}(s) \delta_{k-j;n}(s)+\delta_{j;n}(s) \delta_{k-j;n}(s).
\]
Applying Step~1 gives, for all $s\in [-A,A]$,
\begin{align*}
\left|\int_0^s J(\tau) \dd\tau\right|
&\leq
A \|\eta_{j;n}\eta_{k-j;n}-\eta_{j;\infty}\eta_{k-j;\infty} \|_\infty \\
&\leq
C''
\left(\|\delta_{j;n}\|_{\infty}+\|\delta_{k-j;n}\|_{\infty}
+\|\delta_{j;n}\|_{\infty} \|\delta_{k-j;n}\|_{\infty}\right).
\end{align*}
For $j=1,\ldots, k-1$, the induction assumption gives $\|\delta_{j;n}\|_{\infty}\le C_{j,A}/n$ and $\|\delta_{k-j;n}\|_{\infty}\le C_{k-j,A}/n$.
Altogether, this implies
$$
\left|\int_0^s J(\tau) \dd\tau\right| \leq \frac{C'''}n.
$$

Taken together, Steps 2A and 2B imply the existence of a finite constant $C_{k,A}$ such that $\|\delta_{k;n}\|_{\infty}\le C_{k,A}/n$ for all $n\in \N$. This completes the induction.
\end{proof}

\subsection{Identification of the inviscid limit}\label{subsec:identification}
We showed that for every $k\in \N$ and $s\in \R$,
\begin{equation}
\sg_{k;n}(s) = \frac1n\sum_{j=1}^n x_{j;n}(s)^k  \overset{}{\underset{n\to\infty}\longrightarrow} \sigma_{k;\infty}(s) = \eee^{ks} \eta_{k;\infty}(s),
\end{equation}
where the functions $\eta_{k;\infty}(s)$, $k\in \N$, form the \emph{unique} solution to the ODE system
\begin{equation}\label{eq:ODE_system_infinite_n_repeat}
\begin{aligned}
&\eta_{k;\infty}'(s)
=\frac{k}{2}\sum_{j=1}^{k-1}\eta_{j;\infty}(s)\eta_{k-j;\infty}(s), \qquad k\in \mathbb N, \qquad s\in \mathbb R,
\\
&\eta_{k; \infty}(0) = 1 \quad (\text{for all } k\in  \mathbb N).
\end{aligned}
\end{equation}
The next proposition gives a different way of writing the solution to the same system. It is known~\cite{biane}, but we provide a proof to keep the argument self-contained.

\begin{proposition}\label{prop:identification_limit_hermite}
Let $m_k(s)=\frac{1}{k} [u^{k-1}] ((1+u)^k \eee^{sku})$, $k\in \N$, $s\in \R$. Then,
\begin{equation}\label{eq:ODE_system_for_moments}
\begin{aligned}
&m_{k}'(s)
=\frac{k}{2}\sum_{j=1}^{k-1}m_{j}(s)m_{k-j}(s), \qquad k\in \mathbb N, \qquad s\in \mathbb R,
\\
&m_{k}(0) = 1 \quad (\text{for all } k\in  \mathbb N).
\end{aligned}
\end{equation}
Consequently, $\eta_{k;\infty}(s) = m_k(s)$ and $\sigma_{k;\infty}(s) = \eee^{ks} m_k(s) =  \frac{\eee^{sk}}{k} [u^{ k-1}]((1+u)^{k} \eee^{ sk u})$ for all $s\in \R$ and $k\in \N$.
\end{proposition}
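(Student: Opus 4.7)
\medskip

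\textbf{Proof proposal.} The plan is to recognize the formula for $m_k(s)$ as a Lagrange inversion coefficient, translate the ODE system into a first-order quasilinear PDE for the generating function, and then verify that PDE by implicit differentiation of the functional equation.

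First, I would dispose of the initial condition: at $s=0$,
\[
m_k(0) = \frac{1}{k}[u^{k-1}](1+u)^k = \frac{1}{k}\binom{k}{k-1} = 1,
\]
matching the initial condition in~\eqref{eq:ODE_system_for_moments}.

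Next, I would introduce the formal power series
\[
F(w;s) := \sum_{k=1}^\infty m_k(s) w^k.
\]
The Lagrange inversion formula with $\phi(u) = (1+u)e^{su}$ states that the unique analytic solution (near $w=0$) of the functional equation $F = w(1+F)e^{sF}$ satisfies $[w^k]F = \frac{1}{k}[u^{k-1}]\phi(u)^k = \frac{1}{k}[u^{k-1}]((1+u)^k e^{sku})$, which is exactly $m_k(s)$. So $F(w;s)$ is characterized by the functional equation $F = w(1+F)e^{sF}$.

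Now I would convert the target ODE system to a PDE for $F$. Since $\partial_w(F^2) = 2F\partial_w F$, the identity
\[
\partial_s F = wF \cdot \partial_w F = \tfrac{1}{2} w \partial_w(F^2)
\]
is equivalent, coefficient by coefficient in $w$, to $m_k'(s) = \frac{k}{2}\sum_{j=1}^{k-1} m_j(s)m_{k-j}(s)$. It therefore suffices to establish this PDE. Treating $F$ implicitly via $\Phi(F,w,s) := F - w(1+F)e^{sF} = 0$ and using $w(1+F)e^{sF} = F$, a direct computation gives
\[
\partial_s\Phi = -F^2, \qquad \partial_w\Phi = -\tfrac{F}{w}, \qquad \partial_F\Phi = \tfrac{1}{1+F} - sF,
\]
so the implicit function theorem yields
\[
\partial_s F = \frac{F^2(1+F)}{1 - sF(1+F)}, \qquad \partial_w F = \frac{F(1+F)}{w(1 - sF(1+F))},
\]
and hence $wF \partial_w F = \partial_s F$, as desired.

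Finally, combining this PDE with the initial condition (for $s=0$ the functional equation collapses to $F = w/(1-w)$, giving $m_k(0)=1$), the coefficients $m_k(s)$ solve the ODE system~\eqref{eq:ODE_system_for_moments}. Since this system is identical to~\eqref{eq:ODE_system_infinite_n_repeat} (with the same initial conditions) and has a unique solution by successive integration, we conclude $\eta_{k;\infty}(s) = m_k(s)$ and thus $\sigma_{k;\infty}(s) = \eee^{ks} m_k(s)$ for all $k\in\N$ and $s\in\R$. The only nontrivial step is recognizing the Lagrange inversion structure; everything else is routine implicit differentiation.
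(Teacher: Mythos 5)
Your proposal is correct and follows essentially the same route as the paper: Lagrange inversion with $\phi(u)=(1+u)\eee^{su}$ to get the functional equation $F=w(1+F)\eee^{sF}$, implicit differentiation to derive the transport equation $\partial_s F = wF\,\partial_w F=\tfrac12 w\partial_w(F^2)$, and coefficient extraction plus uniqueness of the triangular ODE system. The only cosmetic difference is that you invoke the implicit function theorem and divide by $\partial_F\Phi$, whereas the paper sidesteps that division by comparing the two differentiated identities directly; both computations are valid near the origin.
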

\begin{proof}
Consider the generating function
$$
\psi(z;s):=\sum_{k=1}^\infty m_k(s) z^k.
$$
Write $m_k(s)= \frac{1}{k} [u^{k-1}] (\phi(u))^k$, $k\in \N$,  where $\phi(u) = (1+u)\eee^{su}$. From the Lagrange inversion formula (see~\cite[Equation~(2.1.1)]{gessel_lagrange_inversion})  it follows that $\psi(z;s)$ solves $\psi(z;s) = z \phi(\psi(z;s))$, that is
$$
\psi(z;s) -  z (1+\psi(z;s)) \eee^{s \psi(z;s)} = 0.
$$
Writing $F(z,s,\psi) = \psi -  z (1+\psi) \eee^{s \psi}$, we have $F(z,s,\psi(z;s)) \equiv 0$.  Treating $\psi$ as an independent variable, the partial derivatives of $F$ in $z$ and $s$ are
$$
\partial_z F(z,s,\psi) = -(1+\psi)\eee^{s \psi},
\qquad
\partial_s F(z,s,\psi) = - z (1+\psi) \psi\eee^{s \psi} = z\psi \cdot \partial_z F(z,s,\psi).
$$
Taking the derivative of the equation $F(z,s,\psi(z;s)) \equiv 0$ in  $z$ and $s$ we get
\begin{align*}
&0 = \partial_z (F(z,s,\psi(z;s))) = (\partial_z F)(z,s,\psi(z;s))  +  (\partial_\psi F) (z,s,\psi(z;s)) \partial_z \psi(z;s),
\\
&0 = \partial_s (F(z,s,\psi(z;s))) = (\partial_s F)(z,s,\psi(z;s))  +  (\partial_\psi F) (z,s,\psi(z;s)) \partial_s \psi(z;s).
\end{align*}
Comparing these equations, it follows that
$$
(\partial_z F)(z,s,\psi(z;s)) \cdot \partial_s \psi(z;s) = (\partial_s F)(z,s,\psi(z;s)) \cdot \partial_z \psi(z;s).
$$
Recalling that $\partial_s F(z,s,\psi) = z\psi \cdot \partial_z F(z,s,\psi)$ gives the PDE
$$
\partial_s \psi(z;s)  =  z \psi(z;s)\cdot \partial_z \psi(z;s) = \frac 12 z\partial_z \psi^2(z;s).
$$
Recall that $\psi(z;s)=\sum_{k=1}^\infty m_k(s) z^k$ and expand
$$
\psi^2(z;s)=\sum_{k = 1}^\infty z^k \sum_{j=1}^{k-1} m_{j}(s) m_{k-j}(s),
\qquad
\frac 12 z\partial_z \psi^2(z;s) = \sum_{k = 1}^\infty \frac k2 z^k \sum_{j=1}^{k-1} m_{j}(s) m_{k-j}(s),
$$
where we used that $z\partial_z z^{k}=kz^k$.  Coefficient extraction gives the ODE system~\eqref{eq:ODE_system_for_moments}.
\end{proof}

\begin{remark}
Let us sketch a different approach to the proof of $\sigma_{k; \infty} (s) =  \frac{\eee^{sk}}{k} [u^{ k-1}] ((1+u)^{k} \eee^{ sk u})$. Introduce the generating function $\Sigma_\infty (x;s) := \sum_{k=0}^\infty\sigma_{k;\infty}(s) x^{-k}$. Repeating backwards the argument of Section~\ref{subsec:finite_n_ODE} one shows that $\Sigma_\infty (x;s)$
satisfies the inviscid Burgers-type PDE
$$
\partial_s \Sigma_{\infty}(x;s)  =  -\tfrac12 L (\Sigma^2_\infty(x;s)) = -x \Sigma_\infty(x;s) \partial_x\Sigma_\infty(x;s),\qquad L=x\partial_x.
$$
The initial condition is $\Sigma_{\infty}(x;0) = x/(x-1)$ and comes from $\sigma_{k;\infty}(0) =1$, $k\in \N_0$. Solving this first-order quasilinear transport equation using the method of characteristics gives the implicit solution
$$
x = \frac{\Sigma_\infty(x;s)}{\Sigma_\infty(x;s)-1} \eee^{s \Sigma_\infty(x;s)}.
$$
Lagrange inversion formula (see~\cite[Equation~(2.1.1)]{gessel_lagrange_inversion}) gives the claimed formula for $\sigma_{k; \infty} (s)$.
\end{remark}

\subsection{$S$-transform}
For completeness, we provide a proof of the following fact known from~\cite[p.~4]{biane}.
\begin{proposition}\label{prop:S_transform_hermite}
Consider a probability distribution $\mu^{(s)}$ with the $S$-transform $S(z;s) =\eee^{-s(z + \frac 12)}$, where $s\in \R$ is a parameter. Then, the $k$-th moment of $\mu^{(s)}$ equals $\frac{\eee^{sk/2}}{k} [u^{k-1}] ((1+u)^k \eee^{sku})$, for all $k\in \N$.
\end{proposition}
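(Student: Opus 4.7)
My plan is to read off the moments directly from the $S$-transform by inverting the defining relation \eqref{eq:S_transform_def} and then matching the resulting functional equation with the Lagrange-inversion setup already used in the proof of Proposition~\ref{prop:identification_limit_hermite}.

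Concretely, I would start from the identity $S_\mu(\psi_\mu(t)) \psi_\mu(t) = (1+\psi_\mu(t))\, t$ rearranged as
$$
t = \frac{\psi_\mu(t)}{1+\psi_\mu(t)}\, S_\mu(\psi_\mu(t)).
$$
Plugging in $S(z;s)=\eee^{-s(z+\frac12)}$ and writing $\psi=\psi_{\mu^{(s)}}(t)$ for brevity yields
$$
t\eee^{s/2} = \frac{\psi}{1+\psi}\,\eee^{-s\psi},
\qquad\text{equivalently}\qquad
\psi = \bigl(t\eee^{s/2}\bigr)(1+\psi)\eee^{s\psi}.
$$
Setting $z:=t\eee^{s/2}$, this is precisely the functional equation $\psi = z\,\phi(\psi)$ with $\phi(u)=(1+u)\eee^{su}$ that characterizes the series $\psi(z;s)=\sum_{k\ge1} m_k(s) z^k$ appearing in the proof of Proposition~\ref{prop:identification_limit_hermite}, where $m_k(s)=\frac1k[u^{k-1}]\bigl((1+u)^k\eee^{sku}\bigr)$ by Lagrange inversion.

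By uniqueness of the formal power-series solution with $\psi(0;s)=0$, I conclude $\psi_{\mu^{(s)}}(t)=\psi(t\eee^{s/2};s)$. Reading off the coefficient of $t^k$ then gives
$$
\int_\C x^k\,\mu^{(s)}(\dd x)
= \eee^{sk/2}\,m_k(s)
= \frac{\eee^{sk/2}}{k}\,[u^{k-1}]\bigl((1+u)^k\eee^{sku}\bigr),
$$
which is the claim. I expect no real obstacle here beyond bookkeeping; the only point worth checking carefully is that $S$ and $\psi$ are treated as formal (equivalently, germs of analytic) power series near the origin so that the substitution $\psi_\mu(t)\mapsto\psi$ and the appeal to Lagrange inversion are legitimate, but this is routine since $S(0;s)=\eee^{-s/2}\neq 0$ guarantees that $\psi\mapsto\psi\,S(\psi;s)/(1+\psi)$ has a unique formal inverse vanishing at $0$.
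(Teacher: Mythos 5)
Your proposal is correct and follows essentially the same route as the paper: invert the defining relation of the $S$-transform to obtain $\psi = t\,\phi(\psi)$ and apply Lagrange inversion. The only cosmetic difference is that you absorb the factor $\eee^{s/2}$ into a rescaling $z = t\eee^{s/2}$ so as to reuse the function $\phi(u)=(1+u)\eee^{su}$ from Proposition~\ref{prop:identification_limit_hermite}, whereas the paper applies Lagrange inversion directly with $\phi(y)=(1+y)\eee^{s(y+\frac12)}$; both yield the identical formula.
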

\begin{proof}
Let $\psi(z)=\sum_{k=1}^\infty (\int_\C u^k \mu^{(s)}(\dd u)) z^k$ be the $\psi$-transform of $\mu^{(s)}$. The definition of the $S$-transform, Equation~\eqref{eq:S_transform_def},  gives
$$
\frac{1+\psi(t)}{\psi(t)} t = S\left(\psi(t); s\right) = \eee^{-s(\psi(t)+\frac12)}.
$$
Hence, $\psi(t) = t \phi(\psi(t))$ with $\phi(y) = (1+y)\eee^{s(y+\frac12)}$.  By the Lagrange inversion formula (see~\cite[Equation~(2.1.1)]{gessel_lagrange_inversion}),  $(\int_\C u^k \mu^{(s)}(\dd u))=  [z^{k}] \psi(z) =   \frac{1}{k} [u^{k-1}] (\phi(u))^k$, $k\in \N$, and the proof is complete.
\end{proof}

\subsection{Weak convergence}
We are now ready to complete the proof of Theorem~\ref{theo:multiplicative_hermite_polys_zeros}. Let $k\in \N$.
In the previous sections we showed that the $k$-th moment of $\llbracket  H_n^*(\cdot; s/n)\rrbracket_n$ converges to the $k$-th moment of a probability measure $\mu^{(s)}$ with the $S$-transform $S(z;s) =\eee^{-s(z + \frac 12)}$. Indeed, the $k$-th moment of $\llbracket  H_n^*(\cdot; s/n)\rrbracket_n$ is $\eee^{-sk/2} \sigma_{k;n}(s)$, see Section~\ref{subsec:setup_hermite}. By   Propositions~\ref{prop:eta_converge_hermite} and~\ref{prop:identification_limit_hermite},  $\eee^{-sk/2} \sigma_{k;n}(s)$ converges to $\frac{\eee^{sk/2}}{k} [u^{ k-1}] ((1+u)^{k} \eee^{ sk u})$,  as $n\to\infty$. By Proposition~\ref{prop:S_transform_hermite}, the limit coincides with the $k$-th moment of the distribution $\mu^{(s)}$ with the $S$-transform $S(z;s) =\eee^{-s(z + \frac 12)}$.

It remains to show that convergence of moments implies weak convergence of $\llbracket  H_n^*(\cdot; s/n)\rrbracket_n$ to $\mu^{(s)}$.

If $s\leq 0$, we know that the distributions $\llbracket  H_n^*(\cdot; s/n)\rrbracket_n$ and $\mu^{(s)}$ are concentrated  on the unit circle.
Convergence of moments of any order $k\in \N_0$ (and hence, by complex conjugation, $k\in \Z$) implies weak convergence by Weyl's criterion.

If $s\geq 0$, we know that  $\llbracket  H_n^*(\cdot; s/n)\rrbracket_n$ and $\mu^{(s)}$ are concentrated on $(0,\infty)$.  The measure $\mu^{(s)}$ is compactly supported and hence uniquely determined by its moments. Altogether this implies weak convergence by the standard method of moments; see~\cite[Section~3.3.5]{durrett_book}.

By the identity $z^n H_n^*(1/z;s) = (-1)^n H_n^*(z;s)$, the measures $\llbracket  H_n^*(\cdot; s/n)\rrbracket_n$ are invariant w.r.t.\ $z\mapsto 1/z$. Hence, $\mu^{(s)}$ is also invariant under $z\mapsto 1/z$.

\section{Proof of Theorem~\ref{theo:multiplicative_laguerre_polys_zeros}}\label{sec:proof_laguerre}
\subsection{Setup}
The basic idea  is the same as in the proof of Theorem~\ref{theo:multiplicative_hermite_polys_zeros}, but the technical details are different and more involved.
Fix an integer $n\in \N$ and a constant $b\in\C$. We shall use the notation
$$
P_n(x;s) := L_n^*(x; b,s) = (x\partial_x + b)^s (x-1)^n, \qquad  x\in \C,  \;  s\in \N_0.
$$
The ``time'' parameter $s\in \N_0$ is now discrete. The role of the heat PDE is now played by the identity
\begin{equation}\label{eq:evolution}
P_n(x;s+1)  =  \left(x \partial_x + b\right) P_n(x;s), \qquad x\in \C, \;  s\in \N_0.
\end{equation}
Let  $x_{1;n}(s)\in \C,\dots,x_{n;n}(s)\in \C$ be the zeros of the polynomial $\Pn(\cdot;s)$ (counted with multiplicity), so that
\begin{equation}\label{eq:root-factorization-n-laguerre}
\llbracket  P_n(\cdot; s)\rrbracket_n = \frac 1n \sum_{j=1}^n  \delta_{x_{j;n}(s)}.
\end{equation}
We denote the $k$-th moment of  $\llbracket  P_n(\cdot; s)\rrbracket_n$ by
\begin{equation}\label{eq:sigma-def-n-laguerre}
\sg_{k;n}(s) := \frac1n\sum_{j=1}^n x_{j;n}(s)^k,
\quad k\in \N_0, \;  s\in \N_0.
\end{equation}
Note that $\sg_{0;n}(s)= 1$ for all $s\in \N_0$, and $\sigma_{k; n}(0) = 1$ for all $k\in  \mathbb N$. Define the generating function
\begin{equation}\label{eq:Sigma-series}
\Sigma_n(x;s) := \sum_{k=0}^\infty\sg_{k;n}(s) x^{-k}
 = \frac xn\sum_{j=1}^n\frac{1}{x-x_{j;n}(s)}
 = \frac{x}{n} \frac{\partial_x P_n(x;s)}{P_n(x;s)}.
\end{equation}
It is well-defined and analytic for sufficiently large $|x|$. Note that $\Sigma_n(\infty;s):=\lim_{|x|\to\infty}\Sigma_n(x;s) = 1$.

\subsection{Difference equation}
We claim that (for sufficiently large $|x|$)
\begin{equation}\label{eq:Sigma-update}
\Sigma_n(x;s+1)-\Sigma_n(x;s)=\frac{x \partial_x \Sigma_n(x;s)}{ b+n \Sigma_n(x;s)},
\qquad
s\in \N_0.
\end{equation}
\begin{proof}
Differentiate \eqref{eq:evolution} with respect to $x$:
\begin{align*}
\partial_x P_n(x;s+1)
= \partial_x \left(x \partial_x P_n(x;s) + b P_n(x;s)\right)
= (1+b) \partial_x P_n(x;s) + x \partial_x^2 P_n(x;s).
\end{align*}
Therefore,
\begin{equation}\label{eq:Sigma-s+1}
\Sigma_n(x;s+1)
=
\frac{x}{n} \frac{\partial_x P_n(x;s+1)}{P_n(x;s+1)}
= \frac{x}{n}
\frac{(1+b) \partial_x P_n(x;s) + x \partial_x^2 P_n(x;s)}
{x \partial_x P_n(x;s) + b P_n(x;s)}.
\end{equation}
In the following, $P_n$ stands for $P_n(x;s)$. Subtracting \eqref{eq:Sigma-series} gives
\begin{align*}
\Sigma_n(x;s+1) - \Sigma_n(x;s)
&= \frac{x}{n}\left(
\frac{(1+b) \partial_x P_n + x \partial_x^2 P_n}{x \partial_x P_n + b P_n}
- \frac{\partial_x P_n}{P_n}\right)  \\
&= \frac{x}{n}
\frac{\left((1+b) \partial_x P_n + x \partial_x^2 P_n\right) P_n
- \partial_x P_n\left(x \partial_x P_n + b P_n\right)}
{\left(x \partial_x P_n + b P_n\right) P_n} \\
&= \frac{x}{n}
\frac{\partial_x P_n\cdot P_n + x\left(\partial_x^2 P_n\cdot P_n - (\partial_x P_n)^2\right)}
{\left(x \partial_x P_n + b P_n\right) P_n}.
\end{align*}
We are going to represent the right-hand side in terms of $\Sigma_n(x;s)$.
By~\eqref{eq:Sigma-series},
\begin{equation*}
\frac{\partial_x P_n(x;s)}{P_n(x;s)} = \frac{n}{x} \Sigma_n(x;s),
\qquad
\partial_x \left(\frac{\partial_x P_n}{P_n}\right)
= \partial_x \left(\frac{n}{x} \Sigma_n(x;s)\right)
= n\left(\frac{\partial_x \Sigma_n(x;s)}{x} - \frac{\Sigma_n(x;s)}{x^2}\right).
\end{equation*}
Using the identity
\begin{equation*}
\partial_x \left(\frac{\partial_x P_n}{P_n}\right)
= \frac{\partial_x^2 P_n}{P_n} - \left(\frac{\partial_x P_n}{P_n}\right)^2,
\end{equation*}
we can rewrite the numerator as
\begin{align*}
\partial_x P_n\cdot P_n
+ x(\partial_x^2 P_n\cdot P_n - (\partial_x P_n)^2)
&= P_n^2  \cdot \left(\frac{\partial_x P_n}{P_n}
+ x \partial_x \left(\frac{\partial_x P_n}{P_n}\right)\right)
\\
&=
P_n^2 \cdot \left(\frac{n}{x} \Sigma_n(x;s)
+ x \cdot n\left(\frac{\partial_x \Sigma_n(x;s)}{x} - \frac{\Sigma_n(x;s)}{x^2}\right)\right)
\\
&=
P_n^2 \cdot  n \partial_x \Sigma_n(x;s).
\end{align*}
For the denominator we note
\begin{equation*}
\left(x \partial_x P_n + b P_n\right) P_n
= P_n^2 \cdot\left(x \frac{\partial_x P_n}{P_n} + b\right)
= P_n^2 \cdot \left(n \Sigma_n(x;s) + b\right).
\end{equation*}
Putting everything together gives
\begin{align*}
\Sigma_n(x;s+1) - \Sigma_n(x;s)
= \frac{x}{n}\cdot
\frac{P_n^2 \cdot n \partial_x \Sigma_n(x;s)}{P_n^2\cdot (n \Sigma_n(x;s)+b)}
= \frac{x \partial_x \Sigma_n(x;s)}{b + n \Sigma_n(x;s)}.
\end{align*}
\end{proof}

\subsection{Bell polynomials}
The next step is to derive a system of difference equations for $\sigma_{k;n}(s)$. As it turns out, these equations involve Bell polynomials.
In this section, we recall some basic properties of ordinary Bell polynomials.

For indeterminates $x_1,x_2,\dots$, the \emph{partial ordinary Bell polynomials} are defined by the generating function
\begin{equation}\label{eq:bell_poly_ordinary_gen_function}
\sum_{n=k}^\infty \widehat{B}_{n,k}(x_1,\ldots, x_{n-k+1}) t^{n}
=\left(\sum_{m=1}^\infty x_m t^{m}\right)^{k}.
\end{equation}
By convention,  $\widehat B_{m,0} = 0$ for $m\in \N$ and $\widehat B_{0,0} = 1$. Coefficient extraction gives
\[
\widehat{B}_{n,k}(x_1,\ldots, x_{n-k+1})
= [t^n]\left(\sum_{m=1}^\infty x_m t^m\right)^{k}
= \sum_{\substack{n_1+\cdots+n_k=n\\ n_i\in \N}}
  x_{n_1}\ldots x_{n_k}.
\]
In particular, $\widehat{B}_{n,k}(x_1,\ldots, x_{n-k+1})$ indeed depends only on $x_1,\ldots, x_{n-k+1}$. \emph{Weighted complete ordinary Bell polynomials} are defined by
\[
\widehat B_{m}(x_1,\dots, x_m; y)
:=
\sum_{r=0}^{m} y^{r} \widehat B_{m,r}\left(x_1,\dots,x_{m-r+1}\right)
=
1 + \sum_{r=1}^{m} y^{r} \widehat B_{m,r}\left(x_1,\dots,x_{m-r+1}\right),
\qquad m\in \N.
\]
Note that $\widehat{B}_{m}(x_1,\ldots, x_m;y)$ indeed depends only on $x_1,\ldots, x_{m},y$. By convention, $\widehat B_{0} := 1$.

\subsection{Difference equations for the moments}
We claim that for all $k\in \N_0$, $n\in \N_0$, $s\in \N_0$ (and assuming $b\neq -n$),
\begin{align}
\sigma_{k;n}(s+1)-\sigma_{k;n}(s)
&= -\frac{1}{b+n}\sum_{j=1}^{k} j \sigma_{j;n}(s)
\widehat B_{ k-j} \left(\sigma_{1;n}(s),\ldots,\sigma_{k-j;n}(s); -\tfrac{n}{b+n}\right)
\label{eq:sigmadiff-final-0}\\
&=
 -\frac{k \sigma_{k;n}(s)}{b+n}
-\frac{1}{b+n}\sum_{j=1}^{k-1} j \sigma_{j;n}(s)
\widehat B_{ k-j} \left(\sigma_{1;n}(s),\ldots,\sigma_{k-j;n}(s); -\tfrac{n}{b+n}\right). \label{eq:sigmadiff-final}
\end{align}

\begin{proof}
The  Laurent series $\Sigma_n(x;s)=\sum_{k=0}^{\infty}\sigma_{k;n}(s) x^{-k}$ and~\eqref{eq:Sigma-update} give
$$
\sigma_{k;n}(s+1)-\sigma_{k;n}(s) = [x^{-k}] (\Sigma_n(x;s+1)-\Sigma_n(x;s)) = [x^{-k}] \frac{x \partial_x \Sigma_n(x;s)}{ b+n \Sigma_n(x;s)}.
$$
For the numerator we have
\begin{equation}\label{eq:xDxSigma}
x \partial_x\Sigma_n(x;s)=-\sum_{j=1}^{\infty} j \sigma_{j;n}(s) x^{-j}.
\end{equation}
We expand the reciprocal of the denominator into a geometric series in the variable $\Sigma_n(x;s)-1$:
\begin{equation}\label{eq:geom}
\frac{1}{b+n \Sigma_n(x;s)}
=\frac{1}{b+n+n (\Sigma_n(x;s)-1)}
=\frac{1}{b+n}\sum_{r=0}^{\infty}\left(-\frac{n}{b+n}\right)^{ r} (\Sigma_n(x;s)-1)^r.
\end{equation}
Multiplying \eqref{eq:xDxSigma} and \eqref{eq:geom} and comparing the coefficient of $x^{-k}$ gives
\begin{equation}\label{eq:sigmadiff-preBell}
\sigma_{k;n}(s+1)-\sigma_{k;n}(s)
=\sum_{j=1}^{k}\left(-j \sigma_{j;n}(s)\right)\cdot \frac{1}{b+n}
\sum_{r=0}^{k-j}\left(-\frac{n}{b+n}\right)^{ r} C_r(k-j;s),
\end{equation}
where
$$
C_r(\ell;s) = [x^{-\ell}] \left((\Sigma_n(x;s)-1)^r\right)
=
[x^{-\ell}] \left(\sum_{k=1}^{\infty}\sigma_{k;n}(s) x^{-k} \right)^r
=
\widehat B_{\ell,r}\left(\sigma_{1;n}(s),\sigma_{2;n}(s),\ldots\right),
$$
the last step being a consequence of~\eqref{eq:bell_poly_ordinary_gen_function}.
Therefore \eqref{eq:sigmadiff-preBell} becomes
\begin{align*}
\sigma_{k;n}(s+1)-\sigma_{k;n}(s)
&=
 - \frac{1}{b+n}\sum_{j=1}^{k} j \sigma_{j;n}(s)
\sum_{r=0}^{k-j}\left(-\frac{n}{b+n}\right)^{ r} \widehat B_{k-j,r}\left(\sigma_{1;n}(s), \sigma_{2;n}(s),\ldots\right)
\\
&=
-\frac{1}{b+n}\sum_{j=1}^{k} j \sigma_{j;n}(s)
\widehat B_{ k-j} \left(\sigma_{1;n}(s),\sigma_{2;n}(s),\dots; -\tfrac{n}{b+n}\right),
\end{align*}
which proves~\eqref{eq:sigmadiff-final-0}. Separating the term with $j=k$ and using $\widehat B_0 = 1$ gives~\eqref{eq:sigmadiff-final}.
\end{proof}

\subsection{Forced system of difference equations}
Write the difference equations~\eqref{eq:sigmadiff-final}  as
\begin{equation}\label{eq:sigmadiff-final_split_first_term}
\sigma_{k;n}(s+1)-\sigma_{k;n}(s)
=
- \frac{k  \sigma_{k;n}(s)}{b+n}
+
\frac 1n \Phi_k \left(\sigma_{1;n}(s), \ldots, \sigma_{k-1;n}(s); -\frac{n}{b+n}\right),
\end{equation}
where
\begin{equation}\label{eq:Phi_def_Bell_poly}
\Phi_k (x_1, \ldots, x_{k-1}; a)
:=
 a   \sum_{j=1}^{k-1} j  x_j
\widehat{B}_{ k-j} \left(x_1,  x_2,\ldots, x_{k-j}; a\right).
\end{equation}
To remove the first term on the right-hand side of~\eqref{eq:sigmadiff-final_split_first_term}, write
\[
c_{j;n}:=1-\frac{j}{ b+n },\qquad
\sigma_{j;n}(s)=c_{j;n}^{ s} \eta_{j;n}(s),
\qquad
j\in \N_0, \; n\in \N_0, \; s\in \N_0.
\]
In terms of the new functions  $\eta_{k;n}(s)$, the system of difference equations takes the form
\begin{equation}\label{eq:eta_system_difference_equations_laguerre}
\begin{aligned}
&\eta_{k;n}(s+1)-\eta_{k;n}(s)
=
\frac{1}{n c_{k;n}^{ s+1}}
\Phi_k \left(
c_{1;n}^{ s}\eta_{1;n}(s),\ldots,c_{k-1;n}^{ s}\eta_{k-1;n}(s);
-\frac{n}{b+n}
\right),
\qquad
k\in \N,  \;  s\in \N_0,
\\
&\eta_{0;n}(s) =  1 \quad (\text{for all } s\in \N_0), \qquad \eta_{k; n}(0) = 1 \quad (\text{for all } k\in  \mathbb N).
\end{aligned}
\end{equation}
Note that $\Phi_k (x_1, \ldots, x_{k-1}; a)$ is a polynomial in all of its arguments, including $a$.

\subsection{Convergence to an ODE limit as $n\to\infty$ for the forced system}
From now on, we assume that $b=b_n\in \C$ depends on $n$ in such a way  that
$$
\lim_{n\to\infty} \frac{b_n}{n} = \beta\in\C, \qquad \beta \neq -1.
$$
The next result states that, as $n\to\infty$, the function $\eta_{k;n}(s)$ converges to a $C^\infty$-function $g_k(\gamma)$, with the time scaling $\gamma = s/n$, and that the functions $g_k$ satisfy a triangular system of ODE's which is a natural continuum limit of~\eqref{eq:eta_system_difference_equations_laguerre}.
Define
\[
\alpha_n:= -\frac{n}{b_n+n} \overset{}{\underset{n\to\infty}\longrightarrow} -\frac{1}{1+\beta} =: \alpha.
\]

\begin{proposition}\label{prop:conv_to_ODE_laguerre}
For every fixed $k\in \N_0$ there exists a unique $C^\infty$-function $g_k:[0,\infty) \to \mathbb{C}$ such that
\begin{equation}\label{eq:prop:conv_to_ODE_laguerre}
\lim_{n\to\infty} \sup_{s\in [0, An]\cap \Z} \left|\eta_{k;n}(s)-g_k(s/n)\right| = 0,
\end{equation}
for every $A>0$.
Moreover,  the functions  $g_k(\gamma)$, $k\in \N_0$,  are  uniquely characterized by the triangular ODE system
\begin{equation}\label{eq:fk-ode}
\begin{aligned}
g_k'(\gamma)
&=
\Phi_k \left(g_1(\gamma), \ldots, g_{k-1}(\gamma); \alpha\right)
=
\alpha \sum_{j=1}^{k-1} j  g_j(\gamma)
\widehat{B}_{ k-j} \left(g_1(\gamma), \ldots, g_{k-j}(\gamma); \alpha\right),
\quad k\in \N,\ \gamma\geq 0,
\\
g_{0}(\gamma) &=  1 \quad (\text{for all } \gamma\geq 0), \qquad g_{k}(0) = 1 \quad (\text{for all } k\in  \mathbb N).
\end{aligned}
\end{equation}
\end{proposition}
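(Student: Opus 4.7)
The plan is to proceed by strong induction on $k$. The base case $k=0$ is trivial with $g_0(\gamma) \equiv 1$. For the inductive step, suppose smooth $g_1,\ldots,g_{k-1}$ have been constructed and that \eqref{eq:prop:conv_to_ODE_laguerre} holds with rate $O(1/n)$ for indices less than $k$. Because $\Phi_k(x_1,\ldots,x_{k-1};a)$ depends only on $x_1,\ldots,x_{k-1}$ (not on $x_k$), the proposed ODE reduces to a scalar inhomogeneous equation $g_k'(\gamma) = F(\gamma)$ with $F(\gamma) := \Phi_k(g_1(\gamma),\ldots,g_{k-1}(\gamma);\alpha)$; since $\Phi_k$ is polynomial and each $g_j$ is $C^\infty$, so is $F$, and $g_k(\gamma) := 1 + \int_0^\gamma F(\tau)\,\dd\tau$ is the unique $C^\infty$ solution with $g_k(0)=1$.

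Before addressing convergence I would establish two uniform estimates on the window $s\in[0,An]\cap\N_0$. (i) \emph{Coefficient asymptotics.} The expansion $\log c_{j;n} = -j/(b_n+n) + O(n^{-2})$, valid for all $n$ large enough (since $|b_n+n|\geq cn$ under the hypothesis $\beta\neq -1$), together with $b_n/n \to \beta$, gives $\sup_{s\in[0,An]}|c_{j;n}^s - \eee^{j\alpha s/n}| = O(1/n)$; in particular $|c_{j;n}^s|$ is bounded uniformly in $n$ and $s\in[0,An]$. (ii) \emph{Uniform boundedness of $\eta_{k;n}$.} By induction on $k$, the recursion \eqref{eq:eta_system_difference_equations_laguerre} expresses $\eta_{k;n}(s+1)-\eta_{k;n}(s)$ as $1/n$ times a polynomial in the bounded quantities $\eta_{1;n}(s),\ldots,\eta_{k-1;n}(s)$, $c_{j;n}^s$, and $\alpha_n$; summing over at most $An$ steps produces $\sup_{n,\,s\in[0,An]\cap\N_0}|\eta_{k;n}(s)| \leq M_k(A)$.

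With these in hand, set $\delta_{j;n}(s) := \eta_{j;n}(s) - g_j(s/n)$ (so $\delta_{j;n}(0)=0$) and expand $\Phi_k$ into its monomials using \eqref{eq:Phi_def_Bell_poly}. Every monomial in the recursion for $\eta_{k;n}$ carries a prefactor of the form $c_{j;n}^s c_{n_1;n}^s\cdots c_{n_r;n}^s / c_{k;n}^{s+1}$ with $j+n_1+\cdots+n_r = k$; by~(i) each such prefactor equals $1 + O(1/n)$ uniformly in $s\in[0,An]$. Combining this with~(ii), $\alpha_n\to\alpha$, the smoothness of $F$ (which yields $g_k((s+1)/n)-g_k(s/n) = \tfrac{1}{n}F(s/n) + O(n^{-2})$), and the induction hypothesis, one obtains the uniform per-step estimate
\[
|\delta_{k;n}(s+1) - \delta_{k;n}(s)| \leq \frac{C_{k,A}}{n^2} + \frac{C_{k,A}}{n}\max_{1\leq j<k}\sup_{s'\leq s}|\delta_{j;n}(s')|.
\]
Telescoping over at most $An$ steps and invoking the inductive bounds on $\delta_{j;n}$ for $j<k$ yields $\sup_{s\in[0,An]\cap\N_0}|\delta_{k;n}(s)| = O(1/n)$, completing the induction. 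A key structural feature is that no $\eta_{k;n}$-dependence appears on the right-hand side of its own recursion, so the induction closes by pure summation -- no Gronwall feedback at level~$k$ is needed.

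The main obstacle is the monomial-by-monomial bookkeeping described above: one must verify that the expansion of $\Phi_k(c_{1;n}^s\eta_{1;n},\ldots;\alpha_n)/c_{k;n}^{s+1}$ collapses, by virtue of the composition identity $j+\sum_i n_i = k$, to the clean polynomial $\Phi_k(g_1,\ldots,g_{k-1};\alpha)$, with a \emph{uniform} $O(1/n)$ error over $s\in[0,An]\cap\N_0$ and over the finitely many compositions involved. Once the asymptotics $\log c_{j;n} = -j/(b_n+n) + O(n^{-2})$ are unpacked, this reduces to straightforward but tedious algebra.
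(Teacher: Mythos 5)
Your argument is correct in structure and does prove the proposition, but it takes a genuinely different route from the paper's and claims slightly more than the hypotheses allow. The paper also inducts on $k$ and first establishes uniform boundedness exactly as you do; for the convergence step, however, it rewrites the recursion \eqref{eq:eta_system_difference_equations_laguerre} as $\eta_{k;n}(s)=1+\int_0^{s/n}g_{k;n}(u)\,\dd u$ with a piecewise-constant integrand, shows this integrand converges pointwise to $G_k(u)=\eee^{-k\alpha u}\Phi_k(\eee^{\alpha u}g_1(u),\ldots,\eee^{(k-1)\alpha u}g_{k-1}(u);\alpha)$, applies dominated convergence, and then collapses $G_k$ to $\Phi_k(g_1,\ldots,g_{k-1};\alpha)$ via the homogeneity $r^\ell\widehat B_\ell(x_1,\ldots,x_\ell)=\widehat B_\ell(rx_1,r^2x_2,\ldots,r^\ell x_\ell)$. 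Your monomial-by-monomial cancellation of the prefactors $c_{j;n}^s\prod_i c_{n_i;n}^s/c_{k;n}^{s+1}$ through the composition identity $j+\sum_i n_i=k$ is precisely the discrete avatar of that homogeneity identity, and your telescoping replaces dominated convergence by explicit error bookkeeping; this is a valid and more quantitative alternative. One caveat: the $O(1/n)$ rate you announce is not available under the stated hypotheses. The proposition assumes only $b_n/n\to\beta$ with no rate, so $\alpha_n-\alpha$ (which enters through $\Phi_k(\cdots;\alpha_n)$ versus $\Phi_k(\cdots;\alpha)$) and the discrepancy in your step (i) between $c_{j;n}^s$ and $\eee^{j\alpha s/n}$ are only $o(1)$, not $O(1/n)$. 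Replacing $O(1/n)$ by a null sequence $\epsilon_n$ throughout, your per-step estimate and telescoping still close the induction and yield \eqref{eq:prop:conv_to_ODE_laguerre}; only the rate must be dropped (or the extra hypothesis $b_n=n\beta+O(1)$ added). The exact prefactor cancellation itself is unaffected, since it uses only $\log c_{j;n}=-j/(b_n+n)+O(n^{-2})$ and the composition identity, not the limiting value $\alpha$.
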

\begin{remark}
The ODE system~\eqref{eq:fk-ode} is triangular: each $g_k'$ is a polynomial of $g_0,\ldots,g_{k-1}$ and $\alpha$. The system can be solved successively starting with $g_0\equiv 1$. By induction, one shows that each $g_k(\gamma)$ is a polynomial in $\gamma$ and $\alpha$. For example,
\[
\begin{aligned}
g_1(\gamma)&=1,\\
g_2(\gamma)&=1+\alpha^{2}\gamma,\\
g_3(\gamma)&=1+\left(3\alpha^{2}+\alpha^{3}\right)\gamma+\tfrac{3}{2}\alpha^{4}\gamma^{2}.
\end{aligned}
\]
\end{remark}
\begin{proof}
Recall that $c_{j;n} = 1-\frac{j}{b_n+n}$.  For each $j\in \N_0$ and $A>0$ we have
$$
\lim_{n\to\infty} \sup_{s\in [0, An]} |c_{j;n}^{ s} - \eee^{-sj/(b_n+n)}| = 0.
$$
In particular, there are constants $1<L_{j,A}<\infty$ such that for all $n$ and all $0\le s\le An$,
\[
0 <L_{j,A}^{-1}\le |c_{j;n}^{s}|\le L_{j, A} < \infty.
\]

\smallskip
\noindent \emph{Uniform boundedness.}
Fix $A>0$ and $k\in\mathbb{N}_0$.  We claim that
\[
\sup_{n\in\mathbb{N}} \sup_{s\in [0, An]\cap \Z}\ |\eta_{k;n}(s)|\ <\ \infty .
\]
We proceed by induction on $k$. The base case $k=0$ is trivial, since $\eta_{0;n}\equiv 1$.  For the inductive step, assume the claim holds for $j\in \{0,\dots,k-1\}$, i.e., there exist $M_{j,A}$ such that
\[
\sup_{n\in \N} \sup_{s\in [0, An]\cap \Z}\ |\eta_{j;n}(s)|\ \le\ M_{j,A}, \qquad j\in \{0,\ldots, k-1\}.
\]
Since $|c_{j;n}^{s}|\le L_{j,A}$, we also have
\[
\sup_{n\in \N} \sup_{s\in [0, An]\cap \Z} |c_{j;n}^{ s}\eta_{j;n}(s)| \le  L_{j,A}  M_{j,A},\qquad j\in \{0,,\dots,k-1\}.
\]
Because $\Phi_k$ is a polynomial, there exists a constant $B_{k, A} < \infty$ with
\[
\sup_{n\in \N} \sup_{s\in [0, An]\cap \Z} \left|\Phi_k\left(c_{1;n}^{ s}\eta_{1;n}(s),\ldots,c_{k-1;n}^{ s}\eta_{k-1;n}(s);\alpha_n\right)\right|
\le B_{k, A}
\]
(also using $\alpha_n\to \alpha$, which implies $\sup_{n\in \N} |\alpha_n|<\infty$).  Hence, by~\eqref{eq:eta_system_difference_equations_laguerre}, there exists $B_{k, A}' < \infty$ such that
\[
\sup_{s\in [0, An]\cap \Z} |\eta_{k;n}(s+1)-\eta_{k;n}(s)|
\le \frac{B_{k,A}'}{n}, \qquad n\in \N.
\]
Summing from $0$ to $s-1$ and using $\eta_{k;n}(0)=1$ gives
\[
|\eta_{k;n}(s)|
\le 1 + \sum_{r=0}^{s-1}\frac{B_{k,A}'}{n}
\le 1 + B_{k,A}' \frac{s}{n}
\le 1 + B_{k,A}' A,
\]
for all $n\in \N$ and $s\in [0, An]\cap \Z$. This completes the induction.

\smallskip
\noindent
\emph{Induction assumption and notation.} To prove~\eqref{eq:prop:conv_to_ODE_laguerre}, we again argue by induction on $k$. The case $k=0$ is trivial: $\eta_{0;n}\equiv g_0\equiv 1$.
Assume that for some $k\in \N$ we already have, for $j=1,\ldots,k-1$,
\[
\sup_{s\in [0, An]\cap \Z}\left|\eta_{j;n}(s)-g_j(s/n)\right|\to 0
\quad\text{as }n\to\infty,
\]
for certain continuous functions $g_j:[0,\infty) \to  \C$.  From \eqref{eq:eta_system_difference_equations_laguerre} we can write, for $s\in [0, An]\cap \Z$,
\begin{equation}\label{eq:integral-form}
\eta_{k;n}(s)
=1+\frac{1}{n}\sum_{r=0}^{s-1}
\frac{1}{c_{k;n}^{ r+1}}
\Phi_k \left(c_{1;n}^{ r}\eta_{1;n}(r),\ldots,c_{k-1;n}^{ r}\eta_{k-1;n}(r); \alpha_n\right).
\end{equation}
Define a piecewise-constant function by
\begin{equation}\label{eq:g_k_n_u_def}
g_{k;n}(u)
:=
\frac{1}{c_{k;n}^{ \lfloor un\rfloor+1}}
\Phi_k \left(
c_{1;n}^{ \lfloor un\rfloor}\eta_{1;n}(\lfloor un\rfloor),\ldots,
c_{k-1;n}^{ \lfloor un\rfloor}\eta_{k-1;n}(\lfloor un\rfloor); \alpha_n\right),
\qquad
u\in [0, A],
\end{equation}
so that
\[
\eta_{k;n}(s)=1+\int_{0}^{s/n} g_{k;n}(u) \dd u \quad\text{for all } s\in [0, An] \cap \Z.
\]

\smallskip
\noindent
\emph{Pointwise limit of $g_{k;n}$.} Fix $u\in[0,A]$.
Then, using $c_{j;n}^{\lfloor un\rfloor}\to \eee^{j\alpha u}$ (since $\alpha=-\frac1{1+\beta}$) and the inductive hypothesis, for all $j\in \{1,\ldots, k-1\}$ we have
\[
\eta_{j;n}(\lfloor un\rfloor)\to g_j(u),\quad
c_{j;n}^{\lfloor un\rfloor}\eta_{j;n}(\lfloor un\rfloor)\to \eee^{j\alpha u}g_j(u),\quad
\frac{1}{c_{k;n}^{\lfloor un\rfloor+1}}\to \eee^{-k\alpha u},\quad
\alpha_n\to\alpha,
\]
as $n\to\infty$.
By continuity of the polynomial $\Phi_k$,
\[
\lim_{n\to\infty} g_{k;n}(u) =
G_k(u):=\eee^{-k\alpha u}
\Phi_k \left(\eee^{\alpha u}g_1(u),\ldots,\eee^{(k-1)\alpha u}g_{k-1}(u); \alpha\right).
\]
Next observe that $r^\ell \widehat B_\ell (x_1,x_2,\ldots, x_\ell) = \widehat B_\ell (r x_1,r^2 x_2,\ldots, r^\ell x_\ell)$ and thus, recalling~\eqref{eq:Phi_def_Bell_poly},
$$
G_k(u) = \eee^{-k\alpha u}
\Phi_k \left(\eee^{\alpha u}g_1(u),\ldots,\eee^{(k-1)\alpha u}g_{k-1}(u); \alpha\right)
=
\Phi_k \left(g_1(u),\ldots,g_{k-1}(u); \alpha\right).
$$

\smallskip
\noindent
\emph{Convergence of Riemann sums to integrals.}
By uniform boundedness of $\eta_{j;n}$ and of $c_{j;n}^{ s}$ (for $j\in \{1,\ldots, k-1\}$ and $s\in [0, An]\cap \Z$), the vector of arguments fed into $\Phi_k$ in~\eqref{eq:g_k_n_u_def} remains in a fixed compact set (independent of $n$ and $u$). Hence there exists $D_{k,A}<\infty$ such that $|g_{k;n}(u)|\le D_{k,A}$ for all $u\in[0,A]$ and all $n\in \N$.
By dominated convergence,
\[
\lim_{n\to\infty} \int_0^{A} |g_{k;n}(u)-G_k(u)| \dd u  =  0.
\]

\smallskip
\noindent
\emph{Completing the proof.} Define
\[
g_k(t):=1+\int_0^{t} G_k(u) \dd u,\qquad t\in[0,A].
\]
Then
\[
\sup_{ s\in [0, An] \cap \Z}\left|\eta_{k;n}(s)-g_k(s/n)\right|
=\sup_{0\le t\le A}\left|\int_0^{t} \left(g_{k;n}(u)-G_k(u)\right) \dd u\right| \le \int_0^{A} |g_{k;n}(u)-G_k(u)| \dd u \overset{}{\underset{n\to\infty}\longrightarrow} 0,
\]
which proves the desired uniform convergence for level $k$.
By construction, $g_k$ is $C^1$ with derivative $g_k'(t)=G_k(t) = \Phi_k (g_1(t),\ldots,g_{k-1}(t); \alpha)$ and $g_k(0)=1$,
which is exactly~\eqref{eq:fk-ode}.
\end{proof}

\subsection{Convergence of moments}
We are now ready to state a result on the convergence of moments for zero distributions of multiplicative Laguerre polynomials.
\begin{proposition}\label{prop:conv_moments_laguerre}
Let $(b_n)_{n\in \N} \subseteq \C$ and $(c_n)_{n\in \N} \subseteq \N_0$ be sequences such that
$$
\lim_{n\to\infty} \frac{b_n}{n} = \beta\in\C, \qquad \beta \neq -1,
\qquad
\lim_{n\to\infty} \frac{c_n}{n} = \gamma \geq 0.
$$
The dependence on the parameters $b_n$ and $\beta$ will be suppressed in the notation.  Recall that $\sigma_{k;n}(c_n)$ is the $k$-th moment of the zero distribution of $L_n^*(x; b_n, c_n)$. Then, for all $k\in \N_0$,
$$
\lim_{n\to\infty} \sigma_{k;n}(c_n) = f_k(\gamma) := \eee^{- \frac{\gamma k}{1+\beta}} g_k(\gamma),
$$
where the functions $g_k(\gamma)$, $k\in \N_0$, are the same as in Proposition~\ref{prop:conv_to_ODE_laguerre}, and the functions $f_k(\gamma)$, $k\in \N_0$, satisfy the following  system of ODE's:
\begin{equation}\label{eq:fk-ode-restated}
\begin{aligned}
&f_k'(\gamma)
=
-\frac 1 {1+\beta}  \sum_{j=1}^{k} j  f_j(\gamma)
\widehat{B}_{ k-j} \left(f_1(\gamma),\ldots, f_{k-j}(\gamma); - \frac 1 {1+\beta}\right),
\quad k\in \N,\ \gamma\geq 0,
\\
&f_{0}(\gamma) =  1 \quad (\text{for all } \gamma\geq 0), \qquad f_{k}(0) = 1 \quad (\text{for all } k\in  \mathbb N).
\end{aligned}
\end{equation}
\end{proposition}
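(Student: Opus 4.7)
The plan is to bootstrap the result directly from Proposition~\ref{prop:conv_to_ODE_laguerre} via the substitution $\sigma_{k;n}(s)=c_{k;n}^{s}\eta_{k;n}(s)$ with $c_{k;n}=1-\tfrac{k}{b_n+n}$, evaluated at the discrete time $s=c_n$.

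First I would establish the scalar limit $c_{k;n}^{c_n}\to \exp(-\gamma k/(1+\beta))$. Since $b_n/n\to\beta\ne -1$ and $c_n/n\to\gamma$, we have $c_{k;n}=1-\tfrac{k}{b_n+n}\to 1$, so $\log c_{k;n}=-\tfrac{k}{b_n+n}+O(n^{-2})$ and hence $c_n\log c_{k;n}\to -k\gamma/(1+\beta)$. Next I would choose $A>\gamma$, so that $c_n\in[0,An]\cap\Z$ for all sufficiently large $n$, and apply Proposition~\ref{prop:conv_to_ODE_laguerre} to conclude
\[
\bigl|\eta_{k;n}(c_n)-g_k(c_n/n)\bigr|\le \sup_{s\in[0,An]\cap\Z}\bigl|\eta_{k;n}(s)-g_k(s/n)\bigr|\longrightarrow 0.
\]
Combining this with continuity of $g_k$ on $[0,A]$ (a $C^\infty$-function by the same proposition) yields $\eta_{k;n}(c_n)\to g_k(\gamma)$. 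Multiplying the two limits gives $\sigma_{k;n}(c_n)\to \eee^{-\gamma k/(1+\beta)} g_k(\gamma)=f_k(\gamma)$.

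It remains to derive the ODE system~\eqref{eq:fk-ode-restated} for $(f_k)$ from the one for $(g_k)$. With $\alpha=-1/(1+\beta)$, the product rule gives
\[
f_k'(\gamma) = \alpha k\, f_k(\gamma) + \eee^{\alpha k\gamma} g_k'(\gamma).
\]
Substituting the ODE~\eqref{eq:fk-ode} for $g_k'(\gamma)$, the key identity
$r^{\ell}\widehat B_{\ell}(x_1,\dots,x_{\ell};a)=\widehat B_{\ell}(rx_1,r^2x_2,\dots,r^{\ell}x_{\ell};a)$
(already used in the proof of Proposition~\ref{prop:conv_to_ODE_laguerre}) with $r=\eee^{\alpha\gamma}$ and $\ell=k-j$ converts each summand via
\[
\eee^{\alpha k\gamma} g_j(\gamma)\,\widehat B_{k-j}(g_1(\gamma),\dots,g_{k-j}(\gamma);\alpha)
= f_j(\gamma)\,\widehat B_{k-j}(f_1(\gamma),\dots,f_{k-j}(\gamma);\alpha),
\]
since $\eee^{\alpha i\gamma}g_i(\gamma)=f_i(\gamma)$. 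The leading term $\alpha k f_k(\gamma)$ is then absorbed as the $j=k$ contribution (noting $\widehat B_0=1$), yielding exactly~\eqref{eq:fk-ode-restated}. The initial condition $f_k(0)=g_k(0)=1$ is immediate.

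I expect the routine part to be the ODE manipulation, which is purely algebraic. The main subtlety is the passage from convergence of $\eta_{k;n}(s)$ uniform over integer $s\in[0,An]\cap\Z$ to convergence at the single point $s=c_n$; here one must invoke the $C^\infty$-regularity (in particular continuity) of $g_k$ so that $g_k(c_n/n)\to g_k(\gamma)$ can be combined with the uniform bound. No further obstacle arises because the uniform estimate already covers all lattice points in the relevant range.
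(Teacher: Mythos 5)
Your proposal is correct and follows essentially the same route as the paper's proof: factor $\sigma_{k;n}(c_n)=c_{k;n}^{c_n}\eta_{k;n}(c_n)$, pass to the limit using Proposition~\ref{prop:conv_to_ODE_laguerre}, and transfer the ODE system from $g_k$ to $f_k$ via the product rule and the homogeneity identity for the Bell polynomials, absorbing the $\alpha k f_k$ term as the $j=k$ summand. Your extra care in choosing $A>\gamma$ and invoking continuity of $g_k$ to get $g_k(c_n/n)\to g_k(\gamma)$ is a detail the paper leaves implicit, not a different argument.
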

\begin{proof}
Recall that $\sigma_{k;n}(c_n)=(c_{k;n})^{c_n} \eta_{k;n}(c_n)$ with $c_{k;n}=1-\frac{k}{b_n+n}$. Now,  $(c_{k;n})^{c_n} \to \eee^{-\gamma k/(1+\beta)} = \eee^{\alpha \gamma k}$, as $n\to\infty$,  where $\alpha = -1/(1+\beta)$. By Proposition~\ref{prop:conv_to_ODE_laguerre}, we have $\eta_{k;n}(c_n) \to g_k(\gamma)$ and  we conclude that $\sigma_{k;n}(c_n) \to \eee^{\alpha \gamma k} g_k(\gamma) = : f_k(\gamma)$. The triangular ODE system~\eqref{eq:fk-ode} for the functions $g_k$ turns into the following ODE system for the functions $f_k$:
\begin{align*}
f_k'(\gamma)
&=
(\eee^{\alpha \gamma k} g_k(\gamma))'
=
\alpha k f_k(\gamma) + \eee^{\alpha \gamma k} g_k'(\gamma)
=
\alpha k f_k(\gamma) + \alpha \eee^{\alpha \gamma k} \sum_{j=1}^{k-1} j  g_j(\gamma)
\widehat{B}_{ k-j} \left(g_1(\gamma),\ldots, g_{k-j}(\gamma); \alpha\right)
\\
&=
\alpha k f_k(\gamma) + \alpha \sum_{j=1}^{k-1} j  f_j(\gamma)
\widehat{B}_{ k-j} \left(f_1(\gamma),\ldots, f_{k-j}(\gamma); \alpha\right)
=
\alpha \sum_{j=1}^{k} j  f_j(\gamma)
\widehat{B}_{ k-j} \left(f_1(\gamma),\ldots, f_{k-j}(\gamma); \alpha\right),
\end{align*}
using the property $r^\ell \widehat B_\ell (x_1,x_2,\ldots, x_\ell) = \widehat B_\ell (r x_1,r^2 x_2,\ldots, r^\ell x_\ell)$ and $\widehat B_0 = 1$.
\end{proof}

\subsection{Free multiplicative Poisson distributions}\label{subsec:free_mult_poisson_properties}
Fix $\gamma\geq 0$ and $\beta\in \C$ such that either $\beta \in \R \backslash [-1,0]$ or $\Re \beta = -1/2$.
Consider a probability distribution $\nu_{\beta, \gamma}$ on $(0,\infty)$ (when $\beta \in \R \backslash [-1,0]$) or on the unit circle (when $\Re \beta = -1/2$) with the $S$-transform
$$
S(y)=\exp \left(\frac{\gamma}{\beta+1+y}\right).
$$
Existence of $\nu_{\beta, \gamma}$ has been established in~\cite[Lemmas~6.4, 7.2]{bercovici_voiculescu_levy_hincin}. Note that for $\gamma = 0$ we have $\nu_{\beta, 0}= \delta_1$. For $k\in \N_0$, let $m_k$ be the $k$-th moment of $\nu_{\beta, \gamma}$. In this section, our aim is to show that $m_k = f_k$, where $f_k$ are the functions appearing in Proposition~\ref{prop:conv_moments_laguerre}.  We shall also derive some other properties of $\nu_{\beta, \gamma}$, thus proving Proposition~\ref{prop:mult_free_poisson_properties}.  This will be done in several lemmas. Consider the generating function
$$
\Sigma(x):= \sum_{k=0}^{\infty} m_k  x^{-k}.
$$

\begin{lemma}
For complex $x$ with sufficiently large absolute value, the function $\Sigma(x)$ satisfies
\begin{equation}\label{eq:Sigma_laguerre_implicit_relation}
x=\frac{\Sigma(x)}{\Sigma(x)-1}\exp \left(-\frac{\gamma}{\beta+\Sigma(x)}\right).
\end{equation}
\end{lemma}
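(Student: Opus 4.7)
The plan is to derive the claimed implicit equation directly from the definition of the $S$-transform~\eqref{eq:S_transform_def}. Since $\nu_{\beta,\gamma}$ is compactly supported, the moment series $M(t):= M_{\nu_{\beta,\gamma}}(t) = \sum_{k\ge 0} m_k t^k$ and the $\psi$-transform $\psi(t) := M(t)-1$ are analytic in a disk around $0$, and $\psi$ is invertible on a (possibly smaller) neighborhood of $0$ because $\psi'(0) = m_1 \neq 0$ holds for the measures of interest (a quick check that $m_1\neq 0$: the first moment is a nonzero limit of finite-free moments, or alternatively one can restrict to a small disk where the Lagrange-inversion identity that implicitly defines $M$ via $S$ is valid). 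In particular, $S$ is analytic in a neighborhood of $0$ and the identity
\[
S(\psi(t))\,\psi(t) = (1+\psi(t))\,t
\]
holds in that neighborhood.

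The key observation is the substitution $t = 1/x$, which yields
\[
M(1/x) = \sum_{k=0}^\infty m_k x^{-k} = \Sigma(x), \qquad \psi(1/x) = \Sigma(x)-1,
\]
valid for $|x|$ sufficiently large (larger than the radius of the support of $\nu_{\beta,\gamma}$ suffices, but a $1/t_0$-bound coming from any disk of analyticity of $M$ also works). Inserting this into the $S$-transform identity gives
\[
S(\Sigma(x)-1)\,(\Sigma(x)-1) = \Sigma(x)\cdot \frac{1}{x},
\]
which rearranges to
\[
x = \frac{\Sigma(x)}{\Sigma(x)-1}\cdot \frac{1}{S(\Sigma(x)-1)}.
\]
Plugging in the hypothesis $S(y)=\exp\!\left(\frac{\gamma}{\beta+1+y}\right)$ with $y = \Sigma(x)-1$ collapses $\beta+1+y$ to $\beta+\Sigma(x)$ and produces
\[
x = \frac{\Sigma(x)}{\Sigma(x)-1}\exp\!\left(-\frac{\gamma}{\beta+\Sigma(x)}\right),
\]
as required.

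The only genuinely non-routine point is ensuring that $\Sigma(x)$ lies in the domain where the $S$-transform identity makes sense. Concretely, one should choose $|x|$ large enough that $|\psi(1/x)| = |\Sigma(x)-1|$ is smaller than the radius of convergence of $S$ around $0$, and so that $1/x$ lies inside the disk where the Lagrange-inverse relation $\psi^{-1}(S(\cdot))$ is valid. Both of these are straightforward from the compact support of $\nu_{\beta,\gamma}$, since then $\Sigma(x)\to 1$ as $|x|\to\infty$, making $\Sigma(x)-1$ arbitrarily small. Thus the identity is an equality of analytic functions in a neighborhood of $x=\infty$, and the lemma follows.
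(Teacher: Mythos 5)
Your proposal is correct and follows essentially the same route as the paper: both substitute $t=1/x$ into the defining identity of the $S$-transform, use $\psi(1/x)=\Sigma(x)-1$, and plug in the explicit formula $S(y)=\exp(\gamma/(\beta+1+y))$ (the paper merely phrases the intermediate step as a formula for the compositional inverse $\psi^{-1}(y)=\frac{y}{1+y}S(y)$ before taking reciprocals). The additional remarks on domains of validity are fine and consistent with the paper's "for $|x|$ sufficiently large" framing.
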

\begin{proof}
Let $\psi(z):=\sum_{k=1}^\infty m_k z^k$, so that \(\Sigma(x)=1+\psi(1/x)\) (with \(m_0=1\)).
By definition of the \(S\)-transform, Equation~\eqref{eq:S_transform_def}, the compositional inverse of $\psi$ satisfies
\begin{equation}\label{eq:chi_laguerre_proof}
\psi^{-1}(y)=\frac{y}{1+y} S(y) = \frac{y}{1+y}  \exp \left(\frac{\gamma}{\beta+1+y}\right).
\end{equation}
With \(y=\psi(1/x)=\Sigma(x)-1\), we obtain the implicit equation for \(\Sigma(x)\):
\begin{align*}
\frac{1}{x}
=\psi^{-1}\left(\Sigma(x)-1\right)
=\frac{\Sigma(x)-1}{\Sigma(x)}\exp \left(\frac{\gamma}{\beta+\Sigma(x)}\right).
\end{align*}
Taking the reciprocal of both sides completes the proof.
\end{proof}

\begin{lemma}
For all $k\in \N$, the $k$-th moment of $\nu_{\beta, \gamma}$ is given by
\begin{align}
m_k
&=\frac{1}{k} [t^{ k-1}] (1+t)^k
\exp \left(-\frac{k\gamma}{\beta+1+t}\right)
\label{eq:m_k_lahuerre_1}
\\
&=
\frac {\eee^{- k\gamma/(1 + \beta)}}k  \sum_{j=0}^{k-1}\binom{k}{j+1}\left(\frac {-1}{1+\beta}\right)^{j}
L_{j}^{(-1)}\left(\frac {k\gamma}{1+\beta}\right),
\label{eq:m_k_lahuerre_2}
\end{align}
where $L_{j}^{(-1)}(\cdot)$ are the associated Laguerre polynomials with $\alpha = -1$.
\end{lemma}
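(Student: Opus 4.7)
The plan is to combine Lagrange inversion with the known generating function of the associated Laguerre polynomials $L_j^{(-1)}$.

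First, I would rewrite the implicit relation already derived in the preceding lemma in Lagrange inversion form. Setting $\psi(z):=\sum_{k\geq 1} m_k z^k$ and using~\eqref{eq:chi_laguerre_proof}, one has $\psi^{-1}(y)=\frac{y}{1+y}\exp\!\bigl(\frac{\gamma}{\beta+1+y}\bigr)$, which is equivalent to the functional equation $\psi(z)=z\,\phi(\psi(z))$ with
\[
\phi(y):=(1+y)\exp\!\left(-\frac{\gamma}{\beta+1+y}\right).
\]
Since $\phi(0)=\exp(-\gamma/(1+\beta))\neq 0$, the Lagrange inversion formula (e.g.\ \cite[Equation~(2.1.1)]{gessel_lagrange_inversion}) applies and yields
\[
m_k=[z^k]\psi(z)=\tfrac{1}{k}[t^{k-1}]\bigl(\phi(t)\bigr)^k=\tfrac{1}{k}[t^{k-1}](1+t)^k\exp\!\left(-\frac{k\gamma}{\beta+1+t}\right),
\]
which is exactly~\eqref{eq:m_k_lahuerre_1}.

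To pass from~\eqref{eq:m_k_lahuerre_1} to~\eqref{eq:m_k_lahuerre_2}, I would change variables $u:=-t/(1+\beta)$, so that $\beta+1+t=(1+\beta)(1-u)$. Then
\[
\exp\!\left(-\frac{k\gamma}{\beta+1+t}\right)=\eee^{-k\gamma/(1+\beta)}\exp\!\left(-\frac{(k\gamma/(1+\beta))\,u}{1-u}\right),
\]
where the second exponential factor cancels into the form $\sum_{n\geq 0}(-1)^n\cdots$ well known to be the generating function of the associated Laguerre polynomials with $\alpha=-1$, namely
\[
\sum_{j=0}^{\infty} L_j^{(-1)}(x)\,u^j=\exp\!\left(\frac{-xu}{1-u}\right).
\]
Applying this identity with $x=k\gamma/(1+\beta)$ and substituting back $u=-t/(1+\beta)$ gives
\[
\exp\!\left(-\frac{k\gamma}{\beta+1+t}\right)=\eee^{-k\gamma/(1+\beta)}\sum_{j=0}^{\infty} L_j^{(-1)}\!\left(\frac{k\gamma}{1+\beta}\right)\left(\frac{-1}{1+\beta}\right)^{j} t^{j}.
\]

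Finally, I would multiply by $(1+t)^k=\sum_{i=0}^{k}\binom{k}{i}t^i$ and extract the coefficient of $t^{k-1}$. The convolution yields
\[
[t^{k-1}](1+t)^k\exp\!\left(-\frac{k\gamma}{\beta+1+t}\right)=\eee^{-k\gamma/(1+\beta)}\sum_{j=0}^{k-1}\binom{k}{j+1}\left(\frac{-1}{1+\beta}\right)^{j} L_j^{(-1)}\!\left(\frac{k\gamma}{1+\beta}\right),
\]
using $\binom{k}{k-1-j}=\binom{k}{j+1}$. Dividing by $k$ delivers~\eqref{eq:m_k_lahuerre_2}. No step is genuinely difficult: the only thing to be careful about is the sign and scaling in the substitution $u=-t/(1+\beta)$, where it is essential that $1+\beta\neq 0$ (which is guaranteed by the hypotheses $\beta\in\R\setminus[-1,0]$ or $\Re\beta=-1/2$) so that the substitution, the Laguerre generating function, and the coefficient extraction are all valid operations in the ring of formal power series in $t$.
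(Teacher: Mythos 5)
Your proposal is correct and follows essentially the same route as the paper: Lagrange inversion applied to $\psi(z)=z\,\phi(\psi(z))$ with $\phi(t)=(1+t)\exp(-\gamma/(\beta+1+t))$ for \eqref{eq:m_k_lahuerre_1}, then the $\alpha=-1$ Laguerre generating function under the substitution $u=-t/(1+\beta)$ followed by coefficient extraction for \eqref{eq:m_k_lahuerre_2}. The sign/scaling bookkeeping and the binomial index shift $\binom{k}{k-1-j}=\binom{k}{j+1}$ are all handled correctly.
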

\begin{proof}
Taking $y=\psi(z)$ in~\eqref{eq:chi_laguerre_proof} gives
$
z= \frac{\psi(z)}{1+\psi(z)}  \exp (\frac{\gamma}{\beta+1+\psi(z)})
$,
which can be written as
$$
\psi(z) = z \Phi(\psi(z)),
\quad
\Phi(t):= (1+t)  \exp \left( - \frac{\gamma}{\beta+1+t}\right).
$$
Recall that $\psi(z) = \sum_{k=1}^\infty m_k z^k$. The Lagrange inversion formula (see~\cite[Equation~(2.1.1)]{gessel_lagrange_inversion}) gives $m_k = \frac{1}{k} [t^{k-1}] (\Phi(t))^k$, $k\in \N$, and proves~\eqref{eq:m_k_lahuerre_1}.

To prove~\eqref{eq:m_k_lahuerre_2}, we need the generating function $\eee^{-\frac{c}{1-u}}
=\eee^{-c}\eee^{-\frac{c u}{1-u}}
=\eee^{-c}\sum_{j=0}^\infty L_j^{(-1)}(c) u^j$.
With $c= \frac{k\gamma}{1+\beta}$ and $u = -\frac{t}{1+\beta}$, this gives
\begin{equation}\label{eq:laguerre_expansion_exponential}
\eee^{-\frac{k\gamma}{1+\beta+t}}
=
\eee^{-\frac{k\gamma}{1+\beta}\cdot\frac{1}{1+t/(\beta+1)}}
=
\eee^{-\frac{k\gamma}{1+\beta}} \sum_{j=0}^\infty L_j^{(-1)}\left(\frac{k\gamma}{1+\beta}\right) \left(\frac{-1}{1+\beta}\right)^j t^j.
\end{equation}
Multiplying this expansion with  $(1+t)^k=\sum_{j=-1}^{k-1}\binom{k}{j+1} t^{k-1-j}$ and extracting  the coefficient of $t^{k-1}$ proves~\eqref{eq:m_k_lahuerre_2}.
\end{proof}

\begin{lemma}
For every $k\geq 2$, the $k$-th free cumulant of $\nu_{\beta, \gamma}$ is  given by
\begin{align}\label{eq:kappa_k_free_mult_poisson_restate}
\kappa_k
=
\frac{1}{k} [t^{ k-1}] \eee^{-k \gamma/(1+\beta+t)}
=
\frac {\eee^{- k\gamma/(1 + \beta)}}k \left(\frac {-1}{1+\beta}\right)^{k-1}
L_{k-1}^{(-1)}\left(\frac {k\gamma}{1+\beta}\right).
\end{align}
\end{lemma}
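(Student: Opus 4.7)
The plan is to compute the $R$-transform of $\nu_{\beta,\gamma}$ in an implicit but tractable form, then apply the Lagrange inversion formula to extract the cumulant $\kappa_k=[w^{k-1}]R(w)$. Once the coefficient $[t^{k-1}]\eee^{-k\gamma/(1+\beta+t)}$ is on the table, the second equality is just the Laguerre expansion~\eqref{eq:laguerre_expansion_exponential} that has already been established, with the coefficient of $t^{k-1}$ read off.

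\textbf{Step 1 (Reciprocal $S$–$R$ identification).} Using $S(z) R(z S(z))=1$ as recorded after~\eqref{eq:S_transform_def}, I parametrize via $w:=y S(y)$ for $y$ in a small disk around $0$. This is an invertible analytic change of variable near $0$ (since $S(0)\neq 0$), so there is an analytic $y=y(w)$ satisfying $w=y S(y)$ with $y(0)=0$. The functional equation then reads
\[
R(w) = \frac{1}{S(y(w))} = \exp\!\left(-\frac{\gamma}{\beta+1+y(w)}\right) =: \Phi(y(w)).
\]

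\textbf{Step 2 (Reduction to $w R(w)=y(w)$).} With $\Phi(y)=\eee^{-\gamma/(\beta+1+y)}=1/S(y)$, the defining equation $w=y S(y)$ becomes $w=y/\Phi(y)$, i.e., $y(w)=w\,\Phi(y(w))$. Combined with $R(w)=\Phi(y(w))$, this yields the crucial identity
\[
w R(w) = w\,\Phi(y(w)) = y(w).
\]
So computing cumulants is reduced to extracting coefficients of the functional inverse $y(w)$.

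\textbf{Step 3 (Lagrange inversion).} Since $y(w)=w\,\Phi(y(w))$ with $\Phi(0)=\eee^{-\gamma/(1+\beta)}\neq 0$, the Lagrange inversion formula (in the form used already in the paper, see~\cite[Equation~(2.1.1)]{gessel_lagrange_inversion}) gives $[w^k] y(w)=\frac{1}{k}[t^{k-1}]\Phi(t)^k$ for $k\geq 1$. Combining with $w R(w)=y(w)$,
\[
\kappa_k = [w^{k-1}] R(w) = [w^k](w R(w)) = \frac{1}{k}[t^{k-1}]\Phi(t)^k
       = \frac{1}{k}[t^{k-1}] \eee^{-k\gamma/(1+\beta+t)},
\]
which is the first equality in~\eqref{eq:kappa_k_free_mult_poisson_restate}.

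\textbf{Step 4 (Closed form via Laguerre polynomials).} The second equality follows directly from~\eqref{eq:laguerre_expansion_exponential}: taking the coefficient of $t^{k-1}$ in the expansion
\[
\eee^{-k\gamma/(1+\beta+t)}=\eee^{-k\gamma/(1+\beta)}\sum_{j=0}^\infty L_j^{(-1)}\!\left(\tfrac{k\gamma}{1+\beta}\right)\!\left(\tfrac{-1}{1+\beta}\right)^j t^j
\]
and dividing by $k$ produces the stated formula. The only (very mild) subtlety is making sure that the identification $R(w)=1/S(y(w))$ holds as an identity of formal/analytic germs at $0$, which is just the injectivity of $z\mapsto zS(z)$ near the origin; after that, everything is a mechanical application of Lagrange inversion and the already-proven Laguerre expansion.
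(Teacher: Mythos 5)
Your proof is correct and follows essentially the same route as the paper: both start from the identity $S(y)R(yS(y))=1$, reduce the cumulant computation to a Lagrange inversion applied to $\phi(y)=1/S(y)=\eee^{-\gamma/(1+\beta+y)}$, and finish with the Laguerre expansion~\eqref{eq:laguerre_expansion_exponential}. The only (cosmetic) difference is that you exploit the identity $wR(w)=y(w)$ and apply plain Lagrange inversion to $y(w)=w\,\phi(y(w))$, whereas the paper applies the Lagrange--B\"urmann form with $G=\phi$ and then simplifies $\tfrac{1}{k-1}[t^{k-2}]\phi'(t)\phi(t)^{k-1}$ to $\tfrac1k[t^{k-1}]\phi(t)^k$; your version is a touch cleaner but not a genuinely different argument.
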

\begin{proof}
Recall the basic identity $S(y) R (yS(y))=1$ and set $\phi(y):= 1/S(y)=\exp (-\frac{\gamma}{1+\beta+y})$.
Then $R(u)=\phi (y(u))$, where
\[
u := y S(y)=\frac{y}{\phi(y)}.
\]
Note that $\phi(0)\neq 0$.  The Lagrange--Bürmann formula (see~\cite[Equation~(2.1.1)]{gessel_lagrange_inversion}) states that for every analytic function $G$, we have  $[u^{r}] G (y(u))=\frac{1}{r} [t^{ r-1}] G'(t) \phi(t)^{r}$,  for all $r\in \N$.  Taking $G=\phi$ gives
\[
\kappa_k = [u^{k-1}]R(u) =  [u^{k-1}] G \left(y(u)\right)=\frac{1}{k-1} [t^{ k-2}] \phi'(t) \phi(t)^{k-1}
=
\frac{1}{k(k-1)} [t^{ k-2}] (\phi(t)^{k})'
=
\frac{1}{k} [t^{ k-1}] \phi(t)^{k},
\]
for all $k\geq 2$. It is easy to check that $\kappa_1 = 1$.  This proves the first formula in~\eqref{eq:kappa_k_free_mult_poisson_restate}.
The second formula follows from~\eqref{eq:laguerre_expansion_exponential}.
\end{proof}
From now on, we write $\Sigma(x)= \Sigma(x;\gamma)$ and $m_k = m_k(\gamma)$, suppressing the dependence on the parameter $\beta$ which we keep fixed.
\begin{lemma}\label{lem:Sigma_PDE_laguerre}
The function $\Sigma(x;\gamma)$ satisfies the PDE
\begin{equation}\label{eq:PDE_for_Sigma_proof_laguerre}
\partial_\gamma \Sigma(x;\gamma)  =  \frac{x \partial_x \Sigma(x;\gamma)}{\beta+\Sigma(x;\gamma)},
\qquad
\Sigma(x;0)=\sum_{k=0}^\infty x^{-k}=\frac{x}{x-1}.
\end{equation}
\end{lemma}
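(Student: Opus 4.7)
The plan is to obtain the PDE by implicit differentiation of the relation \eqref{eq:Sigma_laguerre_implicit_relation}, which has already been proved to hold for all sufficiently large $|x|$ and all $\gamma\geq 0$ for which $\nu_{\beta,\gamma}$ is defined. First I would recast \eqref{eq:Sigma_laguerre_implicit_relation} as $F(x,\gamma,\Sigma(x;\gamma))\equiv 0$, where
$$
F(x,\gamma,\Sigma)\coloneqq x-\frac{\Sigma}{\Sigma-1}\exp\!\left(-\frac{\gamma}{\beta+\Sigma}\right),
$$
treating $x$, $\gamma$, $\Sigma$ as independent variables. A short computation gives
$$
\partial_x F=1,\qquad
\partial_\gamma F=\frac{1}{\beta+\Sigma}\cdot\frac{\Sigma}{\Sigma-1}\exp\!\left(-\frac{\gamma}{\beta+\Sigma}\right)=\frac{x}{\beta+\Sigma},
$$
where the last equality uses \eqref{eq:Sigma_laguerre_implicit_relation} once more to replace the explicit expression by $x$.

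Next I would apply $\partial_x$ and $\partial_\gamma$ to the identity $F(x,\gamma,\Sigma(x;\gamma))\equiv 0$ and use the chain rule to get
$$
\partial_x F+\partial_\Sigma F\cdot\partial_x\Sigma=0,\qquad
\partial_\gamma F+\partial_\Sigma F\cdot\partial_\gamma\Sigma=0.
$$
Eliminating the factor $\partial_\Sigma F$ between these two equations yields
$$
\partial_\gamma\Sigma(x;\gamma)=\frac{\partial_\gamma F}{\partial_x F}\cdot\partial_x\Sigma(x;\gamma)=\frac{x}{\beta+\Sigma(x;\gamma)}\cdot\partial_x\Sigma(x;\gamma),
$$
which, after multiplying numerator and denominator by $1$ and recognizing $x\partial_x\Sigma$, is exactly the claimed PDE. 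The only technical point to check is that $\partial_\Sigma F\neq 0$ in the region where $\Sigma(x;\gamma)$ is a genuine analytic solution branch of the implicit equation; this is automatic for large $|x|$ since $\Sigma(x;\gamma)\to 1$ as $|x|\to\infty$ and $\partial_\Sigma F$ does not vanish there, so the implicit function theorem applies.

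For the initial condition I would invoke the fact that $\nu_{\beta,0}$ is characterized by the $S$-transform $S(y)=\exp(0/(\beta+1+y))=1$, which is the $S$-transform of $\delta_1$, so $\nu_{\beta,0}=\delta_1$ and therefore $m_k(0)=1$ for every $k\in\N_0$. This gives the geometric series $\Sigma(x;0)=\sum_{k\ge 0}x^{-k}=x/(x-1)$, convergent for $|x|>1$. The main (minor) obstacle is purely bookkeeping: verifying that all the formal manipulations above are legitimate as identities between analytic functions in a common domain of $(x,\gamma)$, which follows because for fixed $\gamma$ the moments $m_k(\gamma)$ grow at most geometrically (since $\nu_{\beta,\gamma}$ is compactly supported by Proposition~\ref{prop:mult_free_poisson_properties}), so $\Sigma(x;\gamma)$ is jointly analytic for $|x|$ large enough, and differentiation under the series defining $\Sigma$ is justified.
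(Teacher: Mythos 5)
Your proposal is correct and follows essentially the same route as the paper: implicit differentiation of \eqref{eq:Sigma_laguerre_implicit_relation} in $x$ and $\gamma$, followed by elimination of $\partial_\Sigma F$. The only cosmetic difference is that the paper first takes the logarithm of the implicit relation (so that $\partial_x G=1/x$ and $\partial_\gamma G=1/(\beta+\Sigma)$ come out immediately), whereas you differentiate the exponential form and then substitute the relation back in to identify $\partial_\gamma F=x/(\beta+\Sigma)$; both computations are equivalent.
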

\begin{proof}
Taking the logarithm of~\eqref{eq:Sigma_laguerre_implicit_relation} gives $G(\Sigma(x;\gamma),x,\gamma) = 0$, where
\[
G(\Sigma,x,\gamma):=\log x - \log\Sigma + \log(\Sigma-1) + \frac{\gamma}{\beta+\Sigma}.
\]
Applying the operators $\partial_\gamma$ and $\partial_x$ to the identity $G(\Sigma(x;\gamma),x,\gamma) = 0$ gives
\begin{align*}
&\partial_\Sigma G(\Sigma(x;\gamma),x,\gamma) \partial_\gamma \Sigma(x;\gamma) + \partial_\gamma G(\Sigma(x;\gamma),x,\gamma) = 0,
\\
&\partial_\Sigma G(\Sigma(x;\gamma),x,\gamma) \partial_x \Sigma(x;\gamma) + \partial_x G(\Sigma(x;\gamma),x,\gamma) = 0.
\end{align*}
It follows that $\partial_\gamma \Sigma = \frac{\partial_\gamma G}{\partial_x G} \partial_x \Sigma$.  Compute
$\partial_\gamma G=\frac{1}{\beta+\Sigma}$ and $\partial_x G=\frac{1}{x}$.  This gives $\partial_\gamma \Sigma(x;\gamma)  =  \frac{x \partial_x \Sigma(x;\gamma)}{\beta+\Sigma(x;\gamma)}$.
\end{proof}

\begin{lemma}\label{lem:m_k_gamma_ODE_laguerre}
Recall that $m_k(\gamma)$ is the $k$-th moment of $\nu_{\beta, \gamma}$. Then,
\begin{equation}
\begin{aligned}
&m_k'(\gamma)
=
 -  \frac 1 {1 +  \beta}
\sum_{j=1}^{k} j m_j(\gamma)
\widehat B_{k-j} \left(m_1(\gamma),\ldots, m_{k-j}(\gamma); -\frac 1 {1 + \beta}\right),
\quad k\in \N,\ \gamma\geq 0,
\\
&m_{0}(\gamma) =  1 \quad (\text{for all } \gamma\geq 0), \qquad m_{k}(0) = 1 \quad (\text{for all } k\in  \mathbb N).
\end{aligned}
\end{equation}
\end{lemma}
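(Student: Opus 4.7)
The plan is to derive the ODE system from the PDE for the generating function $\Sigma(x;\gamma)$ established in Lemma~\ref{lem:Sigma_PDE_laguerre}, by coefficient extraction in the Laurent variable $x^{-1}$. This mirrors the derivation of the finite-$n$ difference equations~\eqref{eq:sigmadiff-final} carried out earlier for $\sigma_{k;n}(s)$, but now in the continuous limit and without the $1/n$-viscosity correction.

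First I would recall that for sufficiently large $|x|$ the function $\Sigma(x;\gamma)$ is analytic, with expansion $\Sigma(x;\gamma)=\sum_{k=0}^\infty m_k(\gamma) x^{-k}$, and that it satisfies
\[
\partial_\gamma \Sigma(x;\gamma) = \frac{x \partial_x \Sigma(x;\gamma)}{\beta+\Sigma(x;\gamma)}
\]
by Lemma~\ref{lem:Sigma_PDE_laguerre}. The left-hand side has $x^{-k}$-coefficient equal to $m_k'(\gamma)$. For the numerator on the right, $x \partial_x \Sigma(x;\gamma) = -\sum_{k=1}^\infty k m_k(\gamma) x^{-k}$, exactly as in~\eqref{eq:xDxSigma}. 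For the denominator, I would write $\beta+\Sigma(x;\gamma)=(1+\beta)+(\Sigma(x;\gamma)-1)$ and expand the reciprocal as a geometric series
\[
\frac{1}{\beta+\Sigma(x;\gamma)}
=\frac{1}{1+\beta}\sum_{r=0}^{\infty}\left(-\frac{1}{1+\beta}\right)^{r}\bigl(\Sigma(x;\gamma)-1\bigr)^{r},
\]
in direct analogy with~\eqref{eq:geom}.

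Next I would multiply the two Laurent series and extract the coefficient of $x^{-k}$. Since $\Sigma(x;\gamma)-1 = \sum_{m=1}^\infty m_m(\gamma) x^{-m}$, the defining property~\eqref{eq:bell_poly_ordinary_gen_function} of partial ordinary Bell polynomials yields $[x^{-\ell}](\Sigma(x;\gamma)-1)^{r} = \widehat B_{\ell,r}(m_1(\gamma),m_2(\gamma),\ldots)$. Collecting over $r$ gives the weighted complete Bell polynomial $\widehat B_{k-j}(m_1(\gamma),\ldots,m_{k-j}(\gamma);-\tfrac{1}{1+\beta})$, and the Cauchy product reproduces the desired right-hand side
\[
m_k'(\gamma) = -\frac{1}{1+\beta}\sum_{j=1}^{k} j m_j(\gamma) \widehat B_{k-j}\!\left(m_1(\gamma),\ldots,m_{k-j}(\gamma);-\tfrac{1}{1+\beta}\right),
\]
which is precisely the computation producing~\eqref{eq:sigmadiff-final-0} but with the $1/n$-viscosity term absent.

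The initial conditions are immediate: $m_0(\gamma)=1$ since $\nu_{\beta,\gamma}$ is a probability measure, and $m_k(0)=1$ for all $k\in\N$ because $\nu_{\beta,0}=\delta_1$, as noted at the beginning of Section~\ref{subsec:free_mult_poisson_properties}. I do not anticipate any serious obstacle in the argument; the only minor bookkeeping point is to verify that the formal Laurent series manipulations are legitimate, which follows from analyticity of $\Sigma(x;\gamma)$ in a neighborhood of $x=\infty$ (obtained from the implicit representation~\eqref{eq:Sigma_laguerre_implicit_relation}) so that the geometric series for $1/(\beta+\Sigma(x;\gamma))$ converges for $|x|$ large enough, since $\Sigma(x;\gamma)\to 1$ as $|x|\to\infty$ and hence $\beta+\Sigma(x;\gamma)\to 1+\beta\neq 0$.
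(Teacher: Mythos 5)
Your proposal is correct and follows essentially the same route as the paper: coefficient extraction in the PDE of Lemma~\ref{lem:Sigma_PDE_laguerre}, with the geometric-series expansion of $1/(\beta+\Sigma)$ around $1+\beta$ and the identification of the resulting coefficients via partial and weighted complete ordinary Bell polynomials, exactly as in the finite-$n$ derivation of~\eqref{eq:sigmadiff-final-0}. The justification of convergence for large $|x|$ and the initial conditions via $\nu_{\beta,0}=\delta_1$ are also as in the paper.
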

\begin{proof}
The proof is by coefficient expansion in the PDE~\eqref{eq:PDE_for_Sigma_proof_laguerre}.  Recall that $\Sigma(x;\gamma)=\sum_{k=0}^\infty m_k(\gamma) x^{-k}$. Taking derivative in $x$ gives
\begin{equation}\label{eq:proof_m_k_gamma_laguerre_1}
x \partial_x\Sigma(x;\gamma) = - \sum_{j=1}^\infty j m_j(\gamma) x^{-j}.
\end{equation}
On the other hand, expanding into geometric series gives
$$
\frac{1}{\beta+\Sigma(x;\gamma)}
=\frac{1}{\beta+1+(\Sigma(x;\gamma) - 1)}
=
\frac 1 {1+\beta} \sum_{p=0}^\infty \left(-\frac{1}{1+\beta}\right)^p
\left(\Sigma(x;\gamma) - 1\right)^{p}.
$$
For coefficient extraction we use the  definition of  partial Bell polynomials
\[
\left(\Sigma(x;\gamma) - 1\right)^{p}
=
\left(\sum_{k=1}^\infty m_k(\gamma) x^{-k}\right)^{ p}
=
\sum_{r=p}^\infty \widehat B_{r,p}\left(m_1(\gamma),\ldots, m_{r-p+1}(\gamma)\right) x^{ - r}.
\]
Interchanging the sums and using the definition of weighted ordinary Bell polynomials, we obtain
\begin{align}
\frac{1}{\beta+\Sigma(x;\gamma)}
&=
\frac 1 {1+\beta} \sum_{r=0}^\infty \sum_{p=0}^r \left(-\frac{1}{1+\beta}\right)^p
 \widehat B_{r,p}\left(m_1(\gamma),\ldots, m_{r-p+1}(\gamma)\right) x^{ - r}  \notag
 \\
&=
\frac 1 {1+\beta} \sum_{r=0}^\infty x^{ - r}
\widehat B_{r}\left(m_1(\gamma),\ldots, m_{r}(\gamma); -\frac{1}{1+\beta}\right).  \label{eq:proof_m_k_gamma_laguerre_2}
\end{align}
Taking the product of~\eqref{eq:proof_m_k_gamma_laguerre_1} and~\eqref{eq:proof_m_k_gamma_laguerre_2} and extracting the coefficient of $x^{-k}$ gives
$$
[x^{-k}]\frac{x \partial_x \Sigma(x;\gamma)}{\beta+\Sigma(x;\gamma)}
=
-\frac 1 {1+\beta} \sum_{j=1}^k j m_j(\gamma) \widehat B_{k-j}\left(m_1(\gamma),\ldots, m_{k-j}(\gamma); -\frac{1}{1+\beta}\right).
$$
To complete the proof, recall from~\eqref{lem:Sigma_PDE_laguerre} that $m_k'(\gamma) = [x^{-k}]\partial_\gamma \Sigma(x;\gamma) = [x^{-k}]\frac{x \partial_x \Sigma(x;\gamma)}{\beta+\Sigma(x;\gamma)}$.
\end{proof}

\begin{lemma}
For all admissible $\beta$ and $\gamma$, the $k$-th moment of $\nu_{\beta, \gamma}$ equals $f_k(\gamma)$, where $f_k(\gamma)$ are the functions appearing in Proposition~\ref{prop:conv_moments_laguerre}.
\end{lemma}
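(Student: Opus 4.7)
The proof will be a straightforward uniqueness-of-solutions argument comparing two triangular ODE systems that have already been written down in the paper.

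My plan is to compare the ODE system for $m_k(\gamma)$ established in Lemma~\ref{lem:m_k_gamma_ODE_laguerre} with the ODE system~\eqref{eq:fk-ode-restated} for $f_k(\gamma)$ from Proposition~\ref{prop:conv_moments_laguerre}. Inspecting both systems, they are literally the same: in each, the derivative $h_k'(\gamma)$ equals
\[
-\frac{1}{1+\beta}\sum_{j=1}^{k} j\, h_j(\gamma)\,
\widehat{B}_{k-j}\!\left(h_1(\gamma),\ldots, h_{k-j}(\gamma);\, -\tfrac{1}{1+\beta}\right),
\]
with $h_0\equiv 1$ and $h_k(0)=1$ for all $k\in\N$. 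Hence it suffices to argue uniqueness.

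For the uniqueness step, the key observation is that the system is triangular. Separating out the $j=k$ term, each equation has the form
\[
h_k'(\gamma) = -\tfrac{k}{1+\beta}\, h_k(\gamma) + R_k\!\left(h_1(\gamma),\ldots, h_{k-1}(\gamma)\right),
\]
where $R_k$ is a fixed polynomial depending only on lower-index unknowns (using $\widehat{B}_0=1$ and the fact that $\widehat{B}_{k-j}$ for $j<k$ involves only $h_1,\ldots,h_{k-j}$ with $k-j\le k-1$). So I would argue by induction on $k$: the base case $k=0$ is trivial since $m_0\equiv f_0\equiv 1$, and for the inductive step, assuming $m_j\equiv f_j$ for $j<k$, both $m_k$ and $f_k$ are solutions of the same scalar linear ODE $h_k'(\gamma)=-\tfrac{k}{1+\beta}h_k(\gamma)+R_k(f_1(\gamma),\ldots,f_{k-1}(\gamma))$ with the same initial value $h_k(0)=1$; uniqueness for this linear ODE (which can even be integrated explicitly by the variable-of-constants formula) yields $m_k\equiv f_k$.

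There is no substantial obstacle here, because all the structural work — deriving the PDE for $\Sigma(x;\gamma)$ and the ODE system for $m_k(\gamma)$, as well as establishing convergence $\sigma_{k;n}(c_n)\to f_k(\gamma)$ through the forced system — is already in place in the preceding lemmas. The only minor point to check is that the derivation of the ODE in Lemma~\ref{lem:m_k_gamma_ODE_laguerre} is carried out using exactly the same Bell-polynomial expansion that produced the equation~\eqref{eq:fk-ode-restated}; this has already been done in the proof of that lemma, so the coefficients match term by term. Therefore the proof reduces to a single sentence invoking uniqueness of the triangular system and concluding $m_k(\gamma)=f_k(\gamma)$ for all $k\in\N_0$ and all admissible $\beta,\gamma$, which completes the identification and hence Theorem~\ref{theo:multiplicative_laguerre_polys_zeros} along with Proposition~\ref{prop:mult_free_poisson_properties}.
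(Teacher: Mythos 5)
Your proposal is correct and matches the paper's own proof, which likewise observes that the ODE system for $m_k(\gamma)$ in Lemma~\ref{lem:m_k_gamma_ODE_laguerre} is identical to the system~\eqref{eq:fk-ode-restated} for $f_k(\gamma)$ and concludes by uniqueness of the solution. Your explicit induction on $k$ exploiting the triangular structure is just a slightly more detailed spelling-out of the same uniqueness argument the paper invokes in one line.
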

\begin{proof}
Indeed, the ODE system satisfied by $m_k(\gamma)$, see Lemma~\ref{lem:m_k_gamma_ODE_laguerre}, is identical to that satisfied by $f_k(\gamma)$, see Proposition~\ref{prop:conv_moments_laguerre}. Uniqueness of solution gives the claim.
\end{proof}

\subsection{Weak convergence}
We are now ready to complete the proof of Theorem~\ref{theo:multiplicative_laguerre_polys_zeros}.
In the previous sections we showed that the $k$-th moment of $\llbracket  L_n^*(\cdot; b_n,c_n)\rrbracket_n$ converges to the $k$-th moment of a probability measure $\nu_{\beta, \gamma}$ with the $S$-transform $S(z) = \exp (\gamma/ (\beta+1+y))$. In the positive case, this implies weak convergence since $\nu_{\beta, \gamma}$ is compactly supported. In the unitary case, convergence of moments of any order $k\in \N$ (and hence, $k\in \Z$, by complex conjugation), implies weak convergence. The proof of Theorem~\ref{theo:multiplicative_laguerre_polys_zeros} is complete.


\section*{Acknowledgement}
This work would hardly have been possible without the patient support and creative ideas provided by ChatGPT.  Supported by the German Research Foundation under Germany's Excellence Strategy  EXC 2044/2 -- 390685587, Mathematics M\"unster: Dynamics - Geometry - Structure and by the DFG priority program SPP 2265 Random Geometric Systems.

\bibliographystyle{amsplain}
\bibliography{bib_mult_hermite}

\end{document}